\theoremstyle{plain} %
\newtheorem{theorem}  {Theorem}    [section]
\newtheorem{lemma}      [theorem]{Lemma}
\newtheorem{corollary}  [theorem]{Corollary}
\newtheorem{proposition}[theorem]{Proposition}
\newtheorem{conjecture} {Conjecture}
\newtheorem{question}{Question}
\theoremstyle{definition}
\newtheorem{definition} [theorem]{Definition}
\theoremstyle{remark}
\newtheorem{remark}  [theorem]            {Remark}
\renewcommand{\H}{\mathbb H}
\newcommand{\A}{{\mathbb A}}
\renewcommand{\a}{{\mathbf a}}
\newcommand{\f}{{\mathbf f}}
\newcommand{\Q}{{\mathbb Q}}
\newcommand{\Z}{{\mathbb Z}}
\newcommand{\R}{{\mathbb R}}
\newcommand{\C}{{\mathbb C}}
\newcommand{\bs}{\backslash}
\newcommand{\p}{\mathfrak p}
\newcommand{\OF}{{\mathfrak o}}
\newcommand{\GL}{{\rm GL}}
\newcommand{\sgn}{{\rm sgn}}
\newcommand{\mat}[4]{{\setlength{\arraycolsep}{0.5mm}\left[
      \begin{array}{cc}#1&#2\\#3&#4\end{array}\right]}}
\def\twist{{}}
\def\subscriptp{{}}
\def\vol{\operatorname{vol}}
\def\GL{\operatorname{GL}}
\def\eps{\varepsilon}
\begin{document}

\bibliographystyle{plain}

\title{Large values of newforms on $\GL(2)$ with highly ramified central character}

\author{Abhishek Saha}
\address{Department of Mathematics\\
  University of Bristol\\
  Bristol BS81TW\\
  UK} \email{abhishek.saha@bris.ac.uk}

\thanks{The author is partially supported by EPSRC grant EP/L025515/1.}

\begin{abstract}
We give a lower bound for the sup-norm of an $L^2$-normalized newform in  an irreducible, unitary, cuspidal representation $\pi$ of $\GL_2$ over a number field.  When the central character of $\pi$ is sufficiently ramified, this bound improves upon the trivial bound by a positive power of $N$ where $N$ is the norm of the conductor of $\pi$.  This generalizes a result of Templier, who dealt with the special case when the conductor of the central character equals the conductor of the representation. We also make a conjecture about the true size of the sup-norm in the $N$-aspect that takes into account this central character phenomenon. Our results depend upon some explicit formulas and bounds for the Whittaker newvector over a non-archimedean local field, which may be of independent interest.
\end{abstract}
\maketitle

\def\eachnotany{~{each}~}

\section{Introduction}Let $f$ be either a holomorphic cuspform of weight $k \ge 1$ or a Maass cuspform of weight $k\in \{0,1\}$ and eigenvalue $\lambda$, with respect to the subgroup $\Gamma_1(N)$. Further, assume that $f$ is a newform. Let $\chi$ denote the character of $f$, and $M$ denote the conductor of $\chi$; in particular $M$ divides $N$ and $\chi(-1) = (-1)^k$. The function $F(z)=y^{k/2}|f(z)|$ is a non-negative real-valued $\Gamma_0(N)$-invariant function on the upper-half plane that vanishes at the cusps, and so it is natural to define $$\|f\|_\infty = \sup_{z \in \Gamma_0(N) \bs \H} F(z), \qquad \|f\|_2 = \left (\vol(\Gamma_0(N) \bs \H)^{-1}\int_{\Gamma_0(N) \bs \H} F(z)^2 dz \right)^{1/2}.$$

The problem of bounding $\frac{\|f\|_\infty}{\|f\|_2}$ as the parameter $N$ varies is interesting from various  points of view and has been the topic of several recent works. The ``trivial bound"  is $$N^{-\eps} \ll_{\lambda/k,\eps} \frac{\|f\|_\infty}{\|f\|_2} \ll_{\lambda/k,\eps}N^{1/2 + \eps}.$$  The first non-trivial upper bound was obtained by Blomer and Holowinsky~\cite{blomer-holowinsky} in 2010, who proved $\|f\|_\infty \ll_{\lambda/k, \epsilon} N^{\frac{216}{457} + \epsilon}$ for squarefree $N$. Since then, there has been several improvements. The following are the best currently available upper bounds:

\begin{itemize}

  \item $\frac{\|f\|_\infty}{\|f\|_2}\ll_{\lambda/ k, \epsilon} N^{\frac{1}{3} + \epsilon}$ for squarefree $N$ due to Harcos and Templier~\cite{harcos-templier-2}.

  \item     $\frac{\|f\|_\infty}{\|f\|_2}\ll_{\lambda/ k, \epsilon} N^{\frac{5}{12} + \epsilon}$ for any $N$, and $M=1$, due to the author~\cite{sahasuplevel}.
\end{itemize}

\begin{remark}The paper of Harcos and Templier~\cite{harcos-templier-2} assumes that $\chi$ is the trivial character (i.e., $M=1$) but its methods can be extended to cover the case of general $\chi$.\end{remark}

\begin{remark}One could also ask a related question that focuses only on the \emph{bulk} and not the \emph{cusps}. In that direction, Marshall~\cite{marsh15} has recently proved that whenever $M=1$, $\frac{\|f'|_{\Omega}\|_\infty}{\|f'\|_2}\ll_{\lambda/ k, \Omega, \epsilon} N_1^{1/2 + \epsilon}$ where $\Omega$ is any compact set, $f'$ is a certain shift of $f$, and where $N_1$ is the smallest integer with $N|N_1^2$.

\end{remark}

This begs the question: what is the true size of  $\frac{\|f\|_\infty}{\|f\|_2}$? Applying some heuristics --- mean value estimates, the Lindel\"of hypothesis, the case of oldforms --- seemed to suggest that the following optimal bound might be always true:

\begin{equation}\label{e:folklore} \frac{\|f\|_\infty}{\|f\|_2} \ll_{\lambda/k, \eps} N^{\eps} \end{equation}

In~\cite{templier-large}, Templier referred to~\eqref{e:folklore} as a ``folklore conjecture". Nonetheless, in the same paper, he showed that the conjecture as stated is false. Indeed, he was able to provide the following wide class of counterexamples.

\begin{theorem}[Templier]
Let $f$, $M$, $N$ be as above and suppose that $M=N$. Then $$\frac{\|f\|_\infty}{\|f\|_2} \gg_{\lambda/k, \eps} N^{-\eps} \prod_{p^c \| N} p^{\frac12 \lfloor \frac{c}{2} \rfloor}.$$
In particular, if $N$ is a perfect square then  $$\frac{\|f\|_\infty}{\|f\|_2} \gg_{\lambda/k, \eps} N^{1/4 - \eps}$$ and hence the bound \eqref{e:folklore} is strongly violated by such $f$.
\end{theorem}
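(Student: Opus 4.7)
The plan is to exhibit a specific adelic element $g\in\GL_2(\A_\Q)$ on which the cuspidal automorphic form $\phi$ attached to $f$ is large. Passing to the adelic realization, $\phi$ corresponds to the factorizable vector $\otimes_v v_{0,v}$, where $v_{0,v}$ is the local newvector at $v$; moreover $\|f\|_\infty$ is comparable, up to a bounded archimedean factor depending only on $k$ or $\lambda$, to $\sup_g|\phi(g)|$. I would expand via the Whittaker-Fourier series
\[
\phi(g)=\sum_{\alpha\in\Q^\times}W_\phi\!\left(\begin{pmatrix}\alpha&\\&1\end{pmatrix}g\right),
\qquad W_\phi=\prod_v W_v,
\]
and choose $g=g_\infty\prod_p g_p$ so that a single term (say $\alpha=1$) contributes essentially the whole sum, with that one term being large.

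The core input is a local calculation at each prime $p\mid N$ with $p^c\|N$. The hypothesis $M=N$ forces $c(\omega_{\pi_p})=c(\pi_p)=c$ for every such $p$, so $\pi_p$ is either a principal series $\chi_1\boxplus\chi_2$ with $\chi_1$ unramified and $\chi_2$ of conductor $c$, or a supercuspidal (or twisted Steinberg) of a similarly symmetric shape. Setting $d=\lfloor c/2\rfloor$, I would take
\[
g_p=\begin{pmatrix}p^d&\\&1\end{pmatrix}\begin{pmatrix}1&\\u_p&1\end{pmatrix}
\]
for a suitable unit $u_p\in\OF_p^\times$, apply the Bruhat decomposition of the lower-unipotent factor and the transformation law of $W_p$ under the upper unipotent and the torus. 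The resulting closed form for $W_p(g_p)$ involves $\chi_p$ evaluated on a $p$-adic unit, and the key output is the lower bound
\[
|W_p(g_p)|\;\gg\;p^{d/2}
\]
in the normalization $\|W_p\|_{L^2(\OF_p^\times)}\asymp 1$. Heuristically, when $c(\omega_{\pi_p})=c(\pi_p)$ the newvector is forced to concentrate its $L^2$-mass on a subset of measure $\asymp p^{-d}$ at depth $d$ in the Bruhat stratum, so its sup exceeds its typical size by exactly $p^{d/2}$.

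At the archimedean place I would take $g_\infty$ to be a fixed element depending only on $k$ or $\lambda$ for which $W_\infty$ is bounded below by a constant. Multiplying the local contributions, the $\alpha=1$ term satisfies
\[
\bigl|W_\phi(g)\bigr|\;\gg_{\lambda/k}\;\prod_{p\mid N}p^{d_p/2},\qquad d_p=\lfloor c_p/2\rfloor.
\]
For $\alpha\neq 1$, the support properties of the local newvector force $W_p(\mathrm{diag}(\alpha,1)g_p)$ to vanish outside a compact subset of $\Q_p^\times/\OF_p^\times$, and combined with the archimedean rapid decay of $W_\infty$ this confines the sum to $O_\eps(N^\eps)$ terms. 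A pigeonhole or averaging argument over the admissible choices of the $u_p$ then selects a $g$ for which the extra terms do not cancel the main contribution, and dividing by $\|f\|_2\ll_\eps N^\eps$ yields the stated inequality. The corollary for $N$ a perfect square is immediate since then $\lfloor c_p/2\rfloor=c_p/2$ at every $p\mid N$.

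The principal obstacle is the uniform local estimate $|W_p(g_p)|\gg p^{d/2}$: it requires an explicit formula or a sharp asymptotic for the Whittaker newvector in every local type satisfying $c(\omega_{\pi_p})=c(\pi_p)$, and is what prevents the argument from being purely soft. A secondary, but non-trivial, issue is coordinating the finite-place choices $u_p$ with a single global rational $\alpha$, so that a genuinely dominant term emerges after summation; this is where strong approximation in $\GL_2$ together with an averaging argument over units enters.
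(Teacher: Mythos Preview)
Your local analysis is essentially on target, and the key input --- that the local Whittaker newvector at a prime $p$ with $a(\pi_p)=a(\omega_{\pi_p})=c$ attains a value of size $\asymp p^{\frac12\lfloor c/2\rfloor}$ --- is exactly what both Templier and the present paper use.  One small correction: when $a(\omega_{\pi_p})=a(\pi_p)=c\ge 1$ one has $a(\omega_{\pi_p})>a(\pi_p)/2$, which forces $\pi_p$ to be a principal series $\chi_1\boxplus\chi_2$ with exactly one of the $\chi_i$ unramified; supercuspidals and ramified twists of Steinberg do not arise in this regime, so there is only one local type to treat.

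Your global step, however, is considerably more elaborate than what is needed, and the ``averaging/pigeonhole over the $u_p$'' meant to ensure that the $\alpha\ne 1$ terms do not cancel the main term is the weakest link: you have $O_\eps(N^\eps)$ surviving terms, each of which could in principle be as large as the main one, so producing a specific $g$ with no cancellation is not automatic.  The paper (following Templier) avoids this entirely by running the argument in the opposite direction.  From the integral definition
\[
W_\phi(g)=\int_{\Q\backslash\A}\phi(n(x)g)\psi(-x)\,dx
\]
one gets at once $|W_\phi(g)|\le \vol(\Q\backslash\A)\cdot\sup|\phi|$, hence $\sup|\phi|\gg \sup_g|W_\phi(g)|$.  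Since $W_\phi=c_\phi\prod_v W_v$ factors, one simply multiplies the local suprema $h(\pi_v)$, and the only remaining work is to bound $|c_\phi|/\|\phi\|_2$ from below.  That comes from the Rankin--Selberg identity $\|\phi\|_2^2\asymp |c_\phi|^2\, L(1,\pi,\mathrm{Ad})\prod_v\langle W_v,W_v\rangle$ together with $L(1,\pi,\mathrm{Ad})\ll_\eps N^\eps$ and $\langle W_v,W_v\rangle\asymp 1$ at the ramified finite places.  No control over the other Fourier terms, and no averaging, is ever required.  Your route could be made to work, but the paper's is both shorter and more robust.
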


 Templier's result  tells us that the newforms with powerful level and maximally ramified central character fail the bound \eqref{e:folklore} spectacularly. However, several follow-up questions naturally present themselves. Are Templier's class of examples the only provable counterexamples to the bound \eqref{e:folklore}? What happens when the character is not maximally ramified? How should we modify the ``folklore conjecture"? In this paper we try to answer some of these questions.

The main theorem of this paper (Theorem~\ref{t:globalmain}) is an extension of Templier's result to the case of other central characters. It gives a lower bound  for $\frac{\|\phi\|_\infty}{\|\phi\|_2}$ where $\phi$ is  a newform on $\GL_2(\A_F)$ and $F$ is any number field. This bound involves the conductor of $\phi$, the conductor of its central character, and the archimedean type. In the special case $F=\Q$, and suppressing the archimedean dependence, our result can be stated as follows.
 \medskip

 \textbf{Theorem A.} (cf.  Theorem~\ref{t:globalmain}) \emph{ Let $f$, $M$, $N$ be as above, with prime decompositions $M = \prod_p p^{m_p}$, $N = \prod_p p^{n_p}$. Then \begin{equation}\label{e:globalmain}\frac{\|f\|_\infty}{\|f\|_2} \gg_{\lambda/k, \eps} N^{-\eps}\prod_{p}\max(p^{\frac12 \lfloor \frac{3m_p}{2} \rfloor - \frac{n_p}2}, 1).\end{equation}
In particular, if $M$ is a perfect square and $N^2$ divides $M^3$, then  $$\frac{\|f\|_\infty}{\|f\|_2} \gg_{\lambda/k, \eps} N^{-\eps} \frac{M^{3/4}}{N^{1/2}}$$ and hence the bound \eqref{e:folklore} is violated by such $f$ whenever $M > N^{2/3 + \delta}$ for any fixed $\delta>0$.}

\medskip

Theorem A has some interesting features. Note that the product on the right side of \eqref{e:globalmain} is larger than 1 if and only if there is a prime $p$ such that  $m_p \ge \frac23(n_p+1)$. In other words, we can find counterexamples to the ``folklore conjecture" even when $M \neq N$, provided that $M$ is not too small (in an arithmetic sense) compared to $N$. Furthermore, as $M$ gets smaller, so does the ratio $\frac{\|f\|_\infty}{\|f\|_2}$. The situation is best illustrated by looking at the case of prime power levels $M=p^m$, $N=p^n$ where $p$ is some fixed prime. Then, as $m$ varies (relative to $n$), we get a ``phase transition" at $m\asymp \frac{2n}3$. It is only above this range that our theorem gives counterexamples to the bound \eqref{e:folklore}. This is somewhat curious, and may remind the reader of the transition range of the Bessel function.

In the case that the central character is ``not too highly ramified", \emph{we conjecture that the bound~\eqref{e:folklore} is true. } Precisely:
\begin{conjecture}Let $f$, $M$, $N$ be as above and let $N_1$ be the smallest integer with $N|N_1^2$. Then whenever $M$ divides $N_1$, we have

\begin{equation}
\label{newfolk}
\frac{\|f\|_\infty}{\|f\|_2} \ll_{\lambda/k, \eps} N^{\eps}.
\end{equation}\end{conjecture}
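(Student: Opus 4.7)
The plan is to normalize $\|f\|_2 = 1$ and to bound $F(z) = y^{k/2}|f(z)|$ separately on (i) neighborhoods of each cusp of $\Gamma_0(N)$ and (ii) a compact ``bulk'' region. Near the cusp at $\infty$, the Fourier--Whittaker expansion of $f$ gives
$$F(z) \ll y^{k/2}\sum_{n\ge 1} |\lambda_f(n)| \,|W_\infty(ny)|\,\prod_p |W_p(n)|,$$
where $W_v$ denote the local Whittaker newvectors. Under the hypothesis $M\mid N_1$, the explicit formulas and bounds for $W_p$ developed earlier in the paper control both the size of $\prod_p |W_p(n)|$ and the set of $n$ on which it is nonzero. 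A Cauchy--Schwarz decomposition then separates the Hecke factor (which is $O(N^\eps)$ via Rankin--Selberg on $\sum_n |\lambda_f(n)|^2$) from the Whittaker factor. The constraint $m_p\le \lceil n_p/2\rceil$ is precisely the threshold below which each local Whittaker newvector stays of ``principal-series size'' and contributes only $N^\eps$ to the second moment in $n$, which should yield $F(z) \ll N^\eps$ throughout the Siegel set at $\infty$.

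For any other cusp, I would apply the Atkin--Lehner operator $w_L$ associated with a Hall divisor $L$ of $N$ (i.e.\ $\gcd(L, N/L) = 1$). A standard representation-theoretic calculation shows that $w_L f$ is again an $L^2$-normalized newform of level $N$ whose central character $\overline{\chi_L}\chi_{N/L}$ has the \emph{same} conductor $M$; in particular the hypothesis $M\mid N_1$ is preserved under $w_L$. The cuspidal bound of the previous step then applies with $w_L f$ in place of $f$, and transfers via $w_L^{-1}$ to a pointwise bound on $F(z)$ in the neighborhood of the cusp indexed by $L$.

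The hard part will be the bulk. On a compact set where $y\asymp 1$ the Fourier expansion provides no savings, and one must instead invoke the pre-trace formula with an amplifier built from Hecke operators adapted to $\chi$. Geometrically, this reduces to counting matrices $\gamma\in\Mat_2(\Z)$ of prescribed determinant and congruence class modulo $N$ which move a fixed $z$ a short distance in $\H$. Even in the case $M=1$ this counting problem has resisted any estimate better than the $N^{5/12+\eps}$ of~\cite{sahasuplevel}, so reaching the conjectured $N^\eps$ in the bulk would seem to require either a Ramanujan-on-average-type input for $\lambda_f(n)\chi(n)$, or subconvexity for certain shifted convolution sums, or a genuinely new lattice-point estimate --- any one of which would be a major advance. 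A realistic intermediate target would be a hybrid bound of the form $\|f\|_\infty/\|f\|_2 \ll_\eps N^\eps (N/M^{3/2})^{\alpha}$ for some $\alpha > 0$, which interpolates between Theorem~A and~\cite{sahasuplevel} and would already provide strong evidence for the full conjecture.
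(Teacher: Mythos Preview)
The statement you are attempting to prove is stated in the paper as a \emph{conjecture}, not a theorem; the paper offers no proof, only the remark that it would follow from Conjecture~\ref{newconj} (the local Lindel\"of hypothesis for Whittaker newforms) together with the heuristic that the only obstructions to small sup-norm are local. So there is no ``paper's own proof'' to compare against, and your proposal should be read as a proof strategy rather than a proof --- which, to your credit, you essentially acknowledge in the final paragraph.

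That said, even as a strategy there are gaps beyond the bulk. First, your cusp argument already assumes the unproven Conjecture~\ref{newconj}: the claim that ``the constraint $m_p\le \lceil n_p/2\rceil$ is precisely the threshold below which each local Whittaker newvector stays of principal-series size'' is exactly that conjecture, and the paper only proves the much weaker upper bound $|W_{\pi_p}|\ll q_p^{\frac12\lfloor n_p/2\rfloor}$ (Theorem~\ref{t:main}). Second, your Atkin--Lehner step does not reach every cusp: the operators $w_L$ for Hall divisors $L\mid N$ act transitively on the cusps of $\Gamma_0(N)$ only when $N$ is squarefree; for powerful $N$ (the interesting case here, since $M\mid N_1$ with $M>1$ forces $N$ non-squarefree) there are many cusps not in the Atkin--Lehner orbit of $\infty$, and the Fourier expansion there involves genuinely different local data (cf.\ the coset representatives $g_{t,k,v}$ with $0<k<n$ in Section~\ref{sec:local-calculations}). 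Finally, as you yourself say, the bulk estimate $N^\eps$ is far beyond current technology even for trivial character, so the proposal is at best a roadmap contingent on several breakthroughs.
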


This conjecture would follow if one assumes that the only obstructions to $\frac{\|f\|_\infty}{\|f\|_2}$ being as small as possible are local, and if Conjecture~\ref{newconj} below is true.

Our method to prove Theorem A (or the more general Theorem~\ref{t:globalmain}) uses the fact that $$\frac{\|f\|_\infty}{\|f\|_2} \gg_{\lambda/k, \eps} N^{-\eps} \prod_v h(\pi_v),$$ where $h(\pi_v)=\max|W_{\pi_v}|\underline{}$ is the corresponding quantity for the newform in the \emph{local Whittaker model}. This was also Templier's strategy, and indeed a similar idea has then been used (in the archimedean aspect) by Brumley and Templier~\cite{brumley-templier} to give lower bounds for sizes of $\GL(n)$ newforms. More precisely, we prove the following purely local result, which may be of independent interest.

\medskip

\textbf{Theorem B.} (Theorem~\ref{t:main}.) \emph{Let $F$ be a non-archimedean local field of characteristic 0 and residue characteristic $q$. Let $\pi$ be a generic
irreducible admissible unitarizable
representation of $\GL_2(F)$ such that the conductor of $\pi$ equals $q^n$ and the conductor of $\omega_\pi$ equals $q^m$. Let $W_{\pi}$ be the Whittaker newform for $\pi$ normalized so that $W_{\pi}(1)=1$. Then we have \begin{equation}
\label{localresult}\max(q^{\frac12 \lfloor \frac{3m}{2} \rfloor - \frac{n}2}, 1)  \ll \max_{g \in \GL_2(F)} |W_\pi(g)| \ll q^{\frac12 \lfloor \frac{n}{2} \rfloor}.\end{equation}}
\medskip

Before saying a few words about the proof of Theorem B, we would like to pose a question.

\begin{question}\label{que}Let the setup be as in Theorem B. What is the true size of $\max_{g \in \GL_2(F)} |W_\pi(g)| $?

\end{question}

In upcoming work, we hope to answer Question~\ref{que} for all $\pi$. However, for now, we would like to make the following conjecture, which may be viewed as a local Lindel\"of hypothesis for Whittaker newforms.

\begin{conjecture}\label{newconj}Let $F$ be a non-archimedean local field of characteristic 0 and residue characteristic $q$. Let $\pi$ be a generic
irreducible admissible unitarizable
representation of $\GL_2(F)$ such that the conductor of $\pi$ equals $q^n$ and the conductor of $\omega_\pi$ equals $q^m$. Suppose that $m \le \lceil \frac{n}{2} \rceil$. Let $W_{\pi}$ be the Whittaker newform for $\pi$ normalized so that $W_{\pi}(1)=1$. Then we have \begin{equation}
\label{localresult} 1  \ll \max_{g \in \GL_2(F)} |W_\pi(g)| \ll_{\epsilon} q^{n \epsilon}.\end{equation}
\end{conjecture}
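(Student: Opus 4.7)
\textbf{Proof proposal for Conjecture \ref{newconj}.} The lower bound $\max_g|W_\pi(g)| \ge W_\pi(1) = 1$ is immediate from the normalization, so the substance lies in the upper bound.

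\emph{Reduction to orbit representatives.} Using the Iwasawa decomposition and the transformation law $W_\pi(zng) = \omega_\pi(z)\psi(x)W_\pi(g)$ for central $z$ and upper unipotent $n$ with $(1,2)$-entry $x$, together with the right invariance of $W_\pi$ under the newform subgroup of level $\p^n$, the problem reduces to bounding $|W_\pi(\mathrm{diag}(a,1)\kappa)|$ uniformly over $a \in F^\times$ and representatives $\kappa$ for $\GL_2(\OF)/K_1(\p^n)$. These representatives are naturally parametrized by $P^1(\OF/\p^n)$ and can be taken in the form of Atkin--Lehner--Iwahori cells $\kappa_j = w\,\mathrm{diag}(\varpi^j,1)\,n(u)$ for $0 \le j \le n$ and $u$ ranging over a finite set depending on $j$.

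\emph{Explicit character sums.} On each cell, express $W_\pi(\mathrm{diag}(a,1)\kappa_j)$ as an oscillatory sum involving $\omega_\pi$ and $\psi$: for principal series via the Kirillov-model formulas developed earlier in the paper, and for supercuspidals via Bushnell--Kutzko types or a Jacquet--Langlands transfer to a quaternion algebra (where the newform is supported on a small compact open set). In either case one arrives at character sums of essentially Gauss sum type over residue rings, with length and conductor depending on $j$, $n$, $\val(a)$, and $m$. The trivial bound on these sums reproduces the factor $q^{\lfloor n/2 \rfloor}$ of Theorem~\ref{t:main}; the hypothesis $m \le \lceil n/2 \rceil$ is precisely the regime in which one expects genuine square-root cancellation to be available in the critical range $j \approx n/2$. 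Exploiting this cancellation and optimizing over $j$, $u$, and $a$ should yield $|W_\pi(g)| \ll_\epsilon q^{n\epsilon}$.

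\emph{Main obstacle.} The hardest part will be the supercuspidal case of the character-sum estimate. Without an explicit Kirillov description of the newform, the oscillatory sum must be extracted indirectly via type theory or a transfer, and then one needs \emph{uniform} square-root cancellation across all cells $\kappa_j$ and all $a \in F^\times$, with the delicate balance between the length of the unipotent average on each cell and the conductor of $\omega_\pi$ dictated precisely by $m \le \lceil n/2 \rceil$. It is this interplay that forms the heart of the conjecture and the reason the bound has not yet been established unconditionally.
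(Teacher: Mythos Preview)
The statement you are addressing is a \emph{conjecture} in the paper, not a theorem; the paper offers no proof, and indeed explicitly labels it ``a local Lindel\"of hypothesis for Whittaker newforms'' whose resolution is deferred to future work. So there is no proof in the paper to compare your proposal against, and your own write-up acknowledges as much in the final paragraph: the ``main obstacle'' you identify is precisely why the bound remains conjectural.

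Your diagnosis of where the difficulty lies---square-root cancellation in the supercuspidal case when $m \le \lceil n/2 \rceil$---agrees with the paper's own commentary (Remark~\ref{rem:localcon}). However, your proposed route through Bushnell--Kutzko types or Jacquet--Langlands transfer is more circuitous than necessary. The paper already provides, via the local functional equation (the ``basic identity'', Proposition~\ref{basicformula}), an explicit expression for $W_\pi(g_{t,k,v})$ in the supercuspidal case as a finite sum of products of $\GL(1)$ $\varepsilon$-factors (Proposition~\ref{prop:whitsupercusp}, equation~\eqref{supercuspsum}). No indirect extraction via type theory is needed: the character sum is written down, has $\asymp q^{k_n}$ terms each of size $\asymp q^{-k_n/2}$ with $k_n = \min(k,n-k)$, and the conjecture is exactly the assertion of square-root cancellation in that sum. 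Your proposal, as it stands, neither supplies that cancellation nor indicates a mechanism beyond the heuristic expectation; it is a restatement of the problem rather than a proof.
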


As for the proof of Theorem B, the main tool is the local functional equation of Jacquet-Langlands. Using this, we are able to get an explicit formula (Proposition~\ref{basicformula}, which we call ``the basic identity") for the Whittaker newform for all generic representations. This method  has the advantage that it does not require the existence of an induced model and so our formula is also valid for supercuspidals. However, in the special case when $m>n/2$ (in the notation of Theorem B), the representation $\pi$ must be a principal series representation, and in this case our formula simplifies into a sum of products of $\GL(1)$ $\varepsilon$-factors. Furthermore, in the range $m > 2n/3$, we can prove that at certain special values of $g$, the additive and multiplicative characters appearing in the expression for $W_\pi(g)$ conspire to produce large values.

We note that our method differs slightly from the one Templier used in~\cite{templier-large} to deal with his case.\footnote{However, we have been recently informed by Templier that he was aware of the functional equation method and in his manuscript \cite{temp:notes} under preparation has used it to double-check his earlier results.} Templier used an intertwining operator from the induced model to the Whittaker model to get a formula for the Whittaker newform. In contrast, we use the local functional equation to get the formula for the Whittaker newform directly.  It is likely that Templier's method can also be used to obtain our result. We also prove many other exact identities for the local Whittaker newform which should be of independent interest. In some forthcoming papers, we will use the local machinery developed in this paper to pursue diverse applications, ranging from improved global sup-norm bounds (see our forthcoming paper \cite{saha-sup-level-hybrid}) to a careful study of the ramification index at cusps for the modular parametrization. Finally, it would be extremely interesting to consider analogues of Theorem A and Theorem B for other groups. We intend to return to this question in the future.

\subsection*{Acknowledgements}I would like to thank the following people: Andy Booker for providing me with the formulas for the archimedean Whittaker function in the case of $\GL_2(\C)$, Paul Nelson for several helpful discussions, Ameya Pitale for carefully reading an earlier version of this manuscript, and Ralf Schmidt for teaching me how to exploit the local functional equation to study local Whittaker functions and for some useful feedback. I would also like to thank Yueke Hu for noticing that the conjecture stated in an earlier version of this paper was not correct.

After an earlier draft of this paper was made available, Nicolas Templier kindly communicated to me his (under preparation) manuscript \cite{temp:notes} on this topic where he also obtains several of the results proved in this paper.

\section{Bounds for the local Whittaker newform}
\label{sec:local-calculations}
\subsection{Some notations}\label{sec:2-notations}

Let  $F$ be a non-archimedean local
  field  of characteristic zero whose
  residue field has cardinality $q$.
Let $\mathfrak{o}$ be its ring of integers,
and $\mathfrak{p}$ its maximal ideal.
Fix a generator $\varpi$ of $\mathfrak{p}$.
Let $|.|$ denote the absolute value
on $F$ normalized so that
$|\varpi| = q^{-1}$. For each $x \in F^\times$, let $v(x)$ denote the integer such that $|x| = q^{-v(x)}$. Define $\zeta_F(s) = (1-q^{-s})^{-1}.$

Let $G = \GL_2(F)$ and $K = \GL_2(\mathfrak{o})$.
For each integral ideal $\mathfrak{a}$ of $\mathfrak{o}$,
let
\[
K_0(\mathfrak{a}) = K \cap \begin{bmatrix}
  \mathfrak{o}  & \mathfrak{o}  \\
  \mathfrak{a}  & \mathfrak{o}
\end{bmatrix}, \quad
K_1(\mathfrak{a})
= K \cap \begin{bmatrix}
  1+ \mathfrak{a}  & \mathfrak{o}  \\
  \mathfrak{a}  & \mathfrak{o}
\end{bmatrix}, \quad
K_2(\mathfrak{a})
= K \cap \begin{bmatrix}
\mathfrak{o}    & \mathfrak{o}  \\
  \mathfrak{a}  & 1+ \mathfrak{a}
\end{bmatrix}.
\]

In particular, $K_0(\mathfrak{o}) = K_1(\mathfrak{o}) =  K_2(\mathfrak{o}) = K$.   Also, we note that for each non-negative integer $n$, $$K_2(\mathfrak{\p^n})
=  \begin{bmatrix}0 &1\\ \varpi^n & 0\end{bmatrix}K_1(\mathfrak{\p^n}) \begin{bmatrix}0 &\varpi^{-n}\\ 1 & 0\end{bmatrix}.
$$
Write
\[
w = \begin{bmatrix}
  0 & 1 \\
  -1 & 0
\end{bmatrix},
\quad
a(y) = \begin{bmatrix}
  y &  \\
  & 1
\end{bmatrix},
\quad
n(x) = \begin{bmatrix}
  1 & x \\
  & 1
\end{bmatrix},
\quad z(t)
= \begin{bmatrix}
  t &  \\
  & t
\end{bmatrix}
\]
for $x \in F, \ y \in F^\times, \ t \in F^\times$.
Define subgroups
$N =
\{n(x):  x\in F \}$,
$A = \{a(y): y\in F^\times \}$,
$Z =\{ z(t):
t \in F^\times \}$,
and $B = Z N A = G \cap
\left[
  \begin{smallmatrix}
    *&*\\
    &*
  \end{smallmatrix}
\right]$ of $G$.

We normalize Haar measures as
follows.
The measure $dx$ on the additive group $F$ assigns volume 1
to $\OF$, and transports to a measure on $N$.
The measure $d^\times y$ on the multiplicative group $F^\times$ assigns
volume 1 to $\OF^\times$,
and transports to measures on
$A$ and $Z$.
We obtain a left Haar measure $d_Lb$ on $B$ via
$$d_L(z(u)n(x)a(y)) = |y|^{-1}\, d^\times u \, d x \, d^\times
y.$$
Let $dk$ be the probability Haar measure on $K$.
The Iwasawa decomposition
$G = B K$ gives a left Haar measure $dg = d_L b \, d k$ on $G$.

For each character\footnote{We adopt the convention that
  a \emph{character} of a topological group is a continuous (but
  not necessarily unitary) homomorphism into $\C^\times$.}
$\sigma$ of $F^\times$,
there exists a minimal non-negative integer $a(\sigma)$ such that
$\sigma(1+t)=1$  for all $t \in \p^{a(\sigma)}$.
For each irreducible admissible
representation $\sigma$ of $G$, there exists a minimal non-negative integer $a(\sigma)$ such that $\sigma$ has a $K_1(\p^{a(\sigma)})$-fixed vector. In either case, the integer  $q^{a(\sigma)}$ is called the
local analytic conductor\footnote{In the rest of this section, we
  will often drop the words ``local analytic" for brevity and
  call this simply the ``conductor".} of $\sigma$;
we denote it by
$C(\sigma)$.  We let $\omega_\sigma$ denote the central character of $\sigma.$
We let $\tilde{X}$ denote the group of characters $\chi$ of $F^\times$ such that $\chi(\varpi)=1$. Thus $\tilde{X}$ is isomorphic to the group of characters of $\OF^\times$.

Fix an additive character $\psi : F \rightarrow \mathbb{C}^1$
with conductor $\mathfrak{o}$. For $\sigma$ a character of $F^\times$ or an irreducible admissible representation of $G$, we let $L(s, \sigma)$ denote the local $L$-factor and $\eps(s, \sigma)=\eps(s, \sigma, \psi)$ denote the local $\eps$-factor; these factors are defined in~\cite{MR0401654}. Some properties of the  $\eps$-factor we will need are:

 \begin{enumerate}
 \item  $\eps(s, \sigma) =  \eps(1/2, \sigma)q^{-a(\sigma)(s-1/2)}.$

 \item  $\eps(1/2, \sigma)\eps(1/2, \tilde{\sigma}) = \omega_\sigma(-1),$ where     $\tilde{\sigma}$ is the contragredient of $\sigma$.

\item   $|\eps(1/2, \sigma)| = 1$ whenever $\sigma$ is unitary.

 \end{enumerate}

 For each  $\mu \in \tilde{X}$, and each $x \in F$, define the Gauss sum $$G(x, \mu) = \int_{\OF^\times}\psi(xy) \mu(y) d^\times y.$$
It is a well-known fact that $G(x, \mu) = 0$ unless $\mu$=1 or $v(x) = -a(\mu)$. More precisely, we have the following formula:

\begin{equation}\label{formulagauussum}G(x, \mu) = \begin{cases}1 & \text{ if } \mu=1, v(x) \ge 0, \\ -\zeta_F(1)q^{-1} &\text{ if } \mu=1, v(x) =-1, \\ 0 & \text{ if } \mu=1, v(x)<-1,\\ \zeta_F(1)|x|^{-1/2}\eps(1/2, \mu^{-1})\mu^{-1}(x)  &\text{ if } \mu\neq 1, v(x)=-a(\mu), \\ 0  &\text{ if } \mu\neq 1, v(x)\neq -a(\mu) \end{cases}\end{equation}

 For each generic representation $\sigma$
of $G$, let $\mathcal{W}(\sigma\subscriptp, \psi)$
denote the Whittaker model of $\sigma\subscriptp$ with respect
to $\psi$ (see~\cite{MR0401654}).
For two characters $\chi_1$, $\chi_2$ on $F^\times$, let $\chi_1
\boxplus \chi_2$ denote the principal series representation on
$G$
that is unitarily induced from the corresponding representation of  $B$. The usual induced model for  $\chi_1
\boxplus \chi_2$ consists of smooth functions $f$ on $G$ satisfying $$f\left(\mat{a}{b}{0}{d} g\right) = |a/d|^{\frac12} \chi_1(a) \chi_2(d)  f(g),$$ but we will be working purely in the Whittaker model $\mathcal{W}(\chi_1
\boxplus \chi_2, \psi)$.

For each integer $k$, we let $\tilde{X}(k)$ denote the set of $\chi \in \tilde{X}$ such that $a(\chi)\le k$. For an irreducible admissible representation $\sigma$ of $G$ and a character $\chi$
of $F^\times$, write
$\sigma \chi$ for the representation
$\sigma \otimes (\chi \circ \det)$ of $G$.

We use the notation
$A \ll_{x,y,z} B$
to signify that there exists
a positive constant $C$, depending at most upon $x,y,z$,
so that
$|A| \leq C |B|$. The symbol $A \ll B$ means the same thing except that this positive constant is now absolute.  The symbol $\epsilon$ will denote a small positive quantity, whose value may change from line to line. The notation $A\asymp B$ means that $B \ll A \ll B$.
\subsection{The local Whittaker newform}
\emph{For the rest of Section~\ref{sec:local-calculations}, we let $\pi$ denote a generic
irreducible admissible unitarizable
representation of $G$ and put $n = a(\pi)$.}
If $n=0$, then $\pi$ is spherical, i.e., $\pi = \chi_1 \boxplus \chi_2$ with $\chi_1$ and $\chi_2$ being unramified characters of $F^\times$. This case is well-understood and so we will restrict ourself to $\pi$ with $n\ge 1$. Any such $\pi$ is one of the following types:

\begin{enumerate}

\item $\pi = \chi \rm{St}$, a twist of the Steinberg representation with an unramified unitary character $\chi$. These have $n=1$.

\item $\pi = \chi_1 \boxplus \chi_2$ where $\chi_1$, $\chi_2$ are unitary characters with $a(\chi_1)>0$ and $a(\chi_2)=0$. These have $n = a(\chi_1)$.

 \item $\pi$ with $L(s, \pi)=1$. These  consist of    the following three subcases:\textbf{ a)} $\pi = \chi \rm{St}$ with $\chi$ unitary ramified; these have $n=2a(\chi)\ge 2$, \textbf{b)} $\pi = \chi_1 \boxplus \chi_2$ where  $\chi_1$, $\chi_2$ are unitary characters with $a(\chi_1) \ge a(\chi_2)>0$; these have $n=a(\chi_1)+a(\chi_2) \ge 2$, \textbf{ c)} $\pi$ supercuspidal; these also have $n \ge 2$.

\end{enumerate}

\begin{definition}\label{defn:normalized-W-newform}
  The \emph{normalized Whittaker newform}
  $W_\pi $
  is
  the unique vector in $\mathcal{W}(\pi,\psi)$ invariant under $K_1(\p^{n})$
  that satisfies $W_\pi(1)= 1$.
\end{definition}

\begin{definition}\label{defn:normalized-W-newform}
  The \emph{normalized Whittaker conjugate-newform}
  $W_\pi^* $
  is
  the unique vector in $\mathcal{W}(\pi,\psi)$ invariant under $K_2(\p^{n})$
  that satisfies $W_\pi^*(1)= 1$.
\end{definition}

If the central character of $\omega_\pi$ is unramified, then $W_\pi$ equals $W_\pi^*$ (and is right-invariant under the larger group $K_0(\p^n)$). However in general, these vectors are different, and it is necessary to use the ``generalized Atkin-Lehner relation" between these vectors (see Corollary~\ref{prevtil2} and Proposition~\ref{cpiformula}) to obtain optimal bounds concerning either.

\begin{remark} We will soon see that $W_{\tilde{\pi}}(g) =  \omega_{\pi}^{-1}(\det(g))W_\pi^*(g)$ where $\tilde{\pi}= \omega_\pi^{-1}\pi$ is the contragredient representation of $\pi$.\footnote{This can also be seen directly, using the fact that the right side represents a vector in  $\mathcal{W}(\tilde{\pi},\psi)$ invariant under $K_1(\p^n)$.}
\end{remark}

\begin{remark} If $\chi$ is an unramified character, then $W_{\pi\chi} (g) = \chi(\det(g)) W_\pi(g)$ (resp., $W^*_{\pi\chi} (g) = \chi(\det(g)) W^*_\pi(g)$). So, in our study of the Whittaker newform, we can twist $\pi$ by an unramified character if doing so will bring it to a simpler form. For example, we may assume that $\omega_\pi \in \tilde{X}$ without affecting the generality of our results.
\end{remark}

The values of $W_\pi$ and $W_\pi^*$ on the diagonal $A$ are well-known; we record them below (see ~\cite{Sch02} for a proof).

\begin{lemma}\label{whitdiagonal}Let $n \ge 1$. Let $t \in \Z$, $v  \in \OF^\times$. We have \begin{equation}W_\pi(a(\varpi^t v)) =  \begin{cases}(\chi(\varpi)q^{-1})^t  & \text{ if } t\ge 0 \text{ and } \pi = \chi \rm{St},  \ \chi \text{ unramified, } \\ 0  & \text{ if } t< 0 \text{ and } \pi = \chi \rm{St},  \ \chi \text{ unramified, }\\ \chi_1(\varpi^t v)q^{-t/2} & \text{ if } t\ge 0 \text{ and } \pi = \chi_1 \boxplus \chi_2, \ a(\chi_1)>0, a(\chi_2)=0,\\ 0& \text{ if } t< 0 \text{ and } \pi = \chi_1 \boxplus \chi_2, \ a(\chi_1)>0, a(\chi_2)=0,\\
\omega_\pi( v) & \text{ if } t=0 \text{ and } L(s, \pi)=1, \\ 0 & \text{ if } t\neq 0 \text{ and } L(s, \pi)=1.
\end{cases}
\end{equation}
\begin{equation}W_\pi^*(a(\varpi^t v)) =  \begin{cases}(\chi(\varpi)q^{-1})^t  & \text{ if } t\ge 0 \text{ and } \pi = \chi \rm{St},  \ \chi \text{ unramified, } \\ 0  & \text{ if } t< 0 \text{ and } \pi = \chi \rm{St},  \ \chi \text{ unramified, }\\ \chi_2(\varpi^t )q^{-t/2} & \text{ if } t\ge 0 \text{ and } \pi = \chi_1 \boxplus \chi_2, \ a(\chi_1)>0, a(\chi_2)=0,\\ 0& \text{ if } t< 0 \text{ and } \pi = \chi_1 \boxplus \chi_2, \ a(\chi_1)>0, a(\chi_2)=0,\\
1 & \text{ if } t=0 \text{ and } L(s, \pi)=1, \\ 0 & \text{ if } t\neq 0 \text{ and } L(s, \pi)=1.
\end{cases}
\end{equation}
\end{lemma}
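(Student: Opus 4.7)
The plan is to first reduce to the case $v = 1$ via a right-invariance argument, and then dispose of the three types of $\pi$ listed above using standard representation-theoretic input. For the reduction, note that the matrix $k_v = \begin{bmatrix} 1 & 0 \\ 0 & v \end{bmatrix}$ lies in $K_1(\p^n)$ for every $v \in \OF^\times$ (its $(1,1)$-entry is $1 \in 1 + \p^n$), and one checks directly that $a(\varpi^t v)\, k_v = z(v)\, a(\varpi^t)$. Combined with the central character relation $W_\pi(z(v) g) = \omega_\pi(v) W_\pi(g)$, this gives $W_\pi(a(\varpi^t v)) = \omega_\pi(v)\, W_\pi(a(\varpi^t))$. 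An exactly parallel argument, using $k_v' = \begin{bmatrix} v^{-1} & 0 \\ 0 & 1 \end{bmatrix} \in K_2(\p^n)$ and the identity $a(\varpi^t v)\, k_v' = a(\varpi^t)$, yields $W_\pi^*(a(\varpi^t v)) = W_\pi^*(a(\varpi^t))$. This already explains the absence of $\omega_\pi$ on the right-hand side of the second formula in the lemma.

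It then remains to compute $W_\pi(a(\varpi^t))$ (and likewise $W_\pi^*(a(\varpi^t))$) in each of the three cases. For case~(1), $\pi = \chi\St$ with $\chi$ unramified, the newvector is Iwahori-fixed and the Jacquet integral applied to its explicit realization in the induced model gives the Steinberg formula $(\chi(\varpi)q^{-1})^t$ for $t \ge 0$ and $0$ otherwise. For case~(2), $\pi = \chi_1 \boxplus \chi_2$ with $a(\chi_1) = n > 0$ and $\chi_2$ unramified, the newvector in the induced model is supported on a single Iwahori-type double coset (a standard Casselman-style calculation), and the resulting Jacquet integral can be evaluated explicitly. For case~(3), where $L(s,\pi)=1$, the vanishing of $W_\pi(a(\varpi^t))$ for $t \neq 0$ should be extracted from the Mellin-transform characterization of the newvector: the Kirillov-model element $y \mapsto W_\pi(a(y))$ is pinned down by the $K_1(\p^n)$-invariance together with the triviality of the $L$-factor, forcing all but the $t=0$ Mellin coefficient to vanish. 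The value at $t = 0$ is determined by the normalization $W_\pi(1) = 1$, after which the central character re-inserts the $\omega_\pi(v)$ factor.

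The corresponding formulas for $W_\pi^*$ can be deduced either by repeating these computations with $K_2(\p^n)$ in place of $K_1(\p^n)$, or---more economically---by invoking the remark that $W_{\tilde\pi}(g) = \omega_\pi^{-1}(\det g)\, W_\pi^*(g)$ and applying the $W_\pi$-formulas to $\tilde\pi = \omega_\pi^{-1}\pi$. I expect the main obstacle to be case~(3c), the supercuspidal subcase, where no induced-model realization is available and the support statement ``$W_\pi|_A$ is concentrated on $a(\OF^\times)$'' must be established directly. The cleanest route is the classical Casselman--Silberger support theorem for supercuspidal new vectors; alternatively, the basic identity (Proposition~\ref{basicformula}) developed later in the paper delivers this support statement as a direct consequence of the local functional equation, so even this subcase fits comfortably into the framework constructed here.
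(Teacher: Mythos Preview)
The paper does not actually prove this lemma: it merely records the formulas and cites Schmidt~\cite{Sch02} with the sentence ``see~\cite{Sch02} for a proof''. Your proposal therefore supplies considerably more than the paper does, and the outline you give (reduce to $v=1$ via the right-$K_1(\p^n)$/$K_2(\p^n)$-invariance trick, then handle the three types case by case) is exactly the kind of argument carried out in Schmidt's paper, so your sketch is correct and matches the intended content behind the citation.

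One small caution: your alternative suggestion for case~(3c), namely to invoke Proposition~\ref{basicformula}, is logically delicate because that proposition is established \emph{after} Lemma~\ref{whitdiagonal} in the paper's flow, and its derivation already uses the reduction $W_\pi(a(\varpi^a v)) = \omega_\pi(v)W_\pi(a(\varpi^a))$ together with the vanishing for $a<0$. It is not strictly circular (the basic identity does not need the precise values $W_\pi(a(\varpi^a))$ for $a\ge 0$), but presenting it as an option here muddies the exposition. Your primary suggestion---appealing to the standard support theorem for supercuspidal newvectors, or equivalently to the fact that $Z(W_\pi,s,\omega_\pi^{-1})/L(s,\omega_\pi^{-1}\pi)$ is a polynomial which, combined with $L(s,\tilde\pi)=1$ and the normalization $W_\pi(1)=1$, forces $\sum_{t\ge 0} W_\pi(a(\varpi^t))q^{t(1/2-s)} = 1$---is the cleaner route and is precisely what is done in the cited reference.
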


\subsection{The main result}
\begin{definition}We define $h(\pi) = \mathrm{sup}_{g \in G} |W_\pi(g)|$.

\end{definition}
Note that $h(\pi) \ge 1$, since  $W_\pi(1)= 1$.

\begin{lemma}\label{trivboundn0}If $n \le 1$, then $h(\pi) \le \frac32$. Furthermore, if $n \le 1$ and $q \ge 4$ then $h(\pi)=1$.

\end{lemma}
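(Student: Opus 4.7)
The plan is to exploit the right-invariance of $W_\pi$ under a relatively large compact-open subgroup (either all of $K$, or $K_0(\p)$) in the small-conductor regime $n \le 1$, and then use the Iwasawa decomposition to reduce $h(\pi)$ to a supremum over finitely many one-parameter families that can be evaluated explicitly.

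For $n=0$, $\pi = \chi_1 \boxplus \chi_2$ is unramified, so $W_\pi$ is right $K$-invariant; combining Iwasawa $G = ZNAK$ with unitarity of $\omega_\pi$ and $\psi$ reduces matters to $h(\pi) = \sup_{m \in \Z}|W_\pi(a(\varpi^m))|$. I would then apply the Casselman--Shalika formula
\[
W_\pi(a(\varpi^m)) \;=\; q^{-m/2} \sum_{j=0}^m \alpha_1^{m-j}\alpha_2^j \qquad (m \ge 0),
\]
with $\alpha_i = \chi_i(\varpi)$, vanishing for $m < 0$. Unitarizability restricts $(\alpha_1,\alpha_2)$ to either the tempered locus $|\alpha_i|=1$ or a complementary-series arc $\alpha_1=\mu q^s$, $\alpha_2=\mu q^{-s}$ with $|\mu|=1$ and $|s|<1/2$. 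The extremal case is tempered with $\alpha_1=\alpha_2$, where $|W_\pi(a(\varpi^m))| \le q^{-m/2}(m+1)$; this attains $\tfrac{3}{2}$ only at $q=2,\ m=2$ and is strictly less otherwise, while for $q \ge 4$ the estimate $q^{-m/2}(m+1) \le 1$ holds uniformly in $m \ge 0$.

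For $n=1$, $\pi = \chi\St$ with $\chi$ unramified unitary, so $\omega_\pi = \chi^2$ is unramified, and $W_\pi = W_\pi^*$ is actually right-invariant under $K_0(\p)$ (not merely $K_1(\p)$). Since $K/K_0(\p) \cong \mathbb{P}^1(\OF/\p)$ has representatives $\{1\} \cup \{n(u)w : u \in \OF/\p\}$, Iwasawa further cuts the problem down to bounding $\sup_{y} |W_\pi(a(y))|$ and $\sup_{y} |W_\pi(a(y)w)|$. Lemma~\ref{whitdiagonal} handles the first, giving $|W_\pi(a(\varpi^t v))| = q^{-t}$ for $t \ge 0$ and $0$ otherwise, so this supremum equals $1$. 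For the second I would invoke the Atkin--Lehner-type identity relating $W_\pi$ and $W_{\tilde\pi}$ (promised as Corollary~\ref{prevtil2}): the element $w_1 = \mat{0}{1}{-\varpi}{0}$ intertwines them up to a unit-modulus scalar, and the factorization $w = w_1 \cdot a(\varpi^{-1})$ reduces $|W_\pi(a(y)w)|$ to a diagonal value of $W_{\tilde\pi} = \overline\chi\St$, again bounded by $1$ via Lemma~\ref{whitdiagonal}.

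The main obstacle will be the off-diagonal supremum in the $n=1$ case: without invoking the Atkin--Lehner formalism that the paper develops later, one has to compute $W_\pi(a(y)w)$ by hand from the induced model $\chi|\cdot|^{1/2} \boxplus \chi|\cdot|^{-1/2}$ and evaluate the Jacquet integral piecewise on the support of the new vector. A secondary subtlety is verifying the uniform bound $\tfrac{3}{2}$ across the full complementary-series parameter range in the $n=0$ case, which forces a short case split on small $m$ together with a monotonicity check in the spectral parameter $s$.
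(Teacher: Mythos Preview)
Your case split at $n=1$ is incomplete: you treat only $\pi = \chi\St$ with $\chi$ unramified, but the classification also admits $\pi = \chi_1 \boxplus \chi_2$ with $a(\chi_1)=1$, $a(\chi_2)=0$ (item~(2) in the list preceding Definition~\ref{defn:normalized-W-newform}), and the paper's own proof explicitly names this possibility. In that principal-series case the central character $\omega_\pi = \chi_1\chi_2$ has conductor $1$, so $W_\pi$ is only $K_1(\p)$-invariant, not $K_0(\p)$-invariant; your coset reduction via $K/K_0(\p) \cong \mathbb{P}^1(\OF/\p)$ therefore does not apply to $W_\pi$ as written, and the self-Atkin--Lehner identity you invoke for Steinberg is no longer available since $\pi \not\cong \tilde\pi$.

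The repair is short once noticed: $K_0(\p)/K_1(\p)$ acts on the line of newvectors through a unitary character of $(\OF/\p)^\times$, so $|W_\pi|$ is still right $K_0(\p)$-invariant and your $\mathbb{P}^1$-reduction applies to it; the off-diagonal piece $|W_\pi(a(y)w)|$ is then handled by the generalized Atkin--Lehner relation (Lemma~\ref{conjlemma}), which converts it into a diagonal value of $W_\pi^*$, and Lemma~\ref{whitdiagonal} finishes. The paper bypasses all of this by simply citing the explicit formulas in \cite{Sch02}; your route amounts to rederiving those formulas, so once the missing case is patched the two arguments are in the same spirit.
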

\begin{proof}The condition $n\le 1$ implies that $\pi$ is either equal to $\chi_1\boxplus \chi_2$ with $a(\chi_1) + a(\chi_2) \le 1$ or $\chi$ is an unramified twist of the Steinberg representation. In both cases, the lemma follows from well-known explicit formulas for the Whittaker newform; see e.g.~\cite{Sch02}.
\end{proof}

Our main result is the following.

\begin{theorem}\label{t:main}Let $\pi$ be a generic
irreducible admissible unitarizable
representation of $G$ and let $n = a(\pi)$, $m=a(\omega_\pi)$. Then we have $$\max(q^{\frac12 \lfloor \frac{3m}{2} \rfloor - \frac{n}2}, 1)  \ll h(\pi) \ll q^{\frac12 \lfloor \frac{n}{2} \rfloor}.$$
\end{theorem}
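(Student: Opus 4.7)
The plan is to deduce both the upper and lower bounds from an explicit formula for $W_\pi$, derived from the Jacquet--Langlands local functional equation, together with a systematic reduction via the Iwasawa and Bruhat decompositions.

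\textbf{Step 1: Reduction to representatives.} Using $W_\pi(zn(x)g) = \omega_\pi(z)\psi(x)W_\pi(g)$, the right $K_1(\mathfrak p^n)$-invariance, and the Iwasawa decomposition $G = ZNAK$, I would reduce the computation of $h(\pi)$ to evaluating $|W_\pi|$ on an explicit finite list of representatives. A Bruhat-style decomposition of $K$ modulo $K_1(\mathfrak p^n)$ produces two families of representatives: the diagonal elements $a(y)$, and ``large-cell'' elements $a(y)wn(\varpi^{-j})$ for $0 \le j \le n$. The diagonal contribution is bounded by $1$ via Lemma~\ref{whitdiagonal}, so the entire problem reduces to understanding $W_\pi(a(y)wn(\varpi^{-j}))$.

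\textbf{Step 2: Upper bound $h(\pi) \ll q^{\frac12\lfloor n/2\rfloor}$.} Applying the basic identity (Proposition~\ref{basicformula}) I would rewrite each $W_\pi(a(y)wn(\varpi^{-j}))$ as a finite sum, indexed by $\mu \in \tilde X$, of products of Gauss sums $G(\cdot,\mu)$, local $\epsilon$-factors of $\GL_1$-twists of $\pi$, and character values in $y$. Unitarizability of $\pi$ forces the surviving $\epsilon$-factors to have absolute value $1$; the explicit formula \eqref{formulagauussum} both bounds the size of each $G(x,\mu)$ by $q^{(a(\mu)-1)/2}$ and imposes a support condition $v(x)=-a(\mu)$ that sharply restricts the index set. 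Pairing these ingredients with a count of non-vanishing $\mu$'s yields the upper bound term-by-term.

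\textbf{Step 3: Lower bound.} The inequality is nontrivial only when $\lfloor 3m/2\rfloor > n$, i.e.\ $m > 2n/3$. A case check on the classification preceding Lemma~\ref{whitdiagonal} shows this range excludes the unramified and unramified-Steinberg cases together with the ramified Steinberg case (where $m \le n/2$) and the balanced principal series with $a(\chi_1)=a(\chi_2)$ (where again $m \le n/2$); what remains are principal series with $a(\chi_1) > 2a(\chi_2)$, and certain supercuspidals. For such $\pi$ I would test the basic identity at a specific element $g_0 = a(y_0) w n(\varpi^{-j_0})$ with $j_0$ and $v(y_0)$ tuned so that the support conditions of \eqref{formulagauussum} annihilate every term of the sum except for a single character $\mu_0$. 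For this $\mu_0$ the high ramification of $\omega_\pi$ forces the conductor of the internal $\GL_1$-twist appearing in the surviving $\epsilon$-factor to drop to $n - \lfloor 3m/2\rfloor$; combining this small-conductor $\epsilon$-factor with the Gauss-sum normalization produces a contribution of absolute value $q^{\frac12\lfloor 3m/2\rfloor - n/2}$.

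\textbf{The principal obstacle} will be the lower bound: locating the precise pair $(y_0,j_0,\mu_0)$ that forces all but one term to vanish and then certifying the size of the survivor. Conceptually this is a stationary-phase phenomenon — the strong ramification of $\omega_\pi$ opens a ``flat direction'' in the $\mu$ variable, and $g_0$ is engineered to resonate with that direction — but rigorously verifying the vanishing of the other terms, and the non-cancellation of the main one, requires handling the principal series and supercuspidal cases separately and tracking each branch of the Gauss-sum formula \eqref{formulagauussum}.
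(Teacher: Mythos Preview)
Your Step 1 matches the paper's setup. Step 2 diverges: the paper does \emph{not} use the basic identity for the upper bound but instead argues via the $L^2$-norm. One shows $\sum_t \int_{\OF^\times} |W_\pi(g_{t,k,v})|^2\, d^\times v = \langle W_\pi, W_\pi\rangle \le 2$, uses the generalized Atkin--Lehner relation (Proposition~\ref{cpiformula}) to reduce to $k \le \lfloor n/2\rfloor$, and then converts the $L^2$ bound to a pointwise bound via the $(1+\p^k)$-invariance of $v \mapsto W_\pi(g_{t,k,v})$. Your term-by-term approach could perhaps be made to work, but you have not mentioned the reduction to $k \le n/2$, without which the triangle inequality on $\sum_\mu c_{t,k}(\mu)\mu(v)$ gives only $q^{n/2}$.

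Step 3 has two genuine gaps. First, no supercuspidals occur: the lower bound is only nontrivial when $m > 2n/3 > n/2$, and a theorem of Tunnell forces $a(\omega_\pi) \le a(\pi)/2$ for supercuspidals, so in this range $\pi$ is necessarily a principal series $\chi_1 \boxplus \chi_2$ with $a(\chi_1) = m$, $a(\chi_2) = n-m$. Second, and more seriously, your proposed mechanism is wrong. The large value does not arise because ``all but one $\mu_0$ vanishes'': a single surviving term has size $\asymp q^{-k/2}$, which is small. What actually happens is that at a carefully chosen $v_0$ (determined by the additive character hidden in $\chi_1|_{1+\p^{m-\lfloor m/2\rfloor}}$), the roughly $q^{\lfloor m/2\rfloor}$ terms in $\sum_\mu c_{t,k}(\mu)\mu(v_0)$ all point in the same direction. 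The key input is an identity of the form $\eps(1/2,\mu^{-1}\chi_1^{-1})\mu(-v_0) = \eps(1/2,\chi_1^{-1})$ for all $\mu$ with $a(\mu) \le \lfloor m/2\rfloor$ (Lemma~\ref{twistepsilonlemma}), which is a $\GL_1$ stationary-phase statement; when $a(\chi_2) > 0$ one also needs a companion computation (Lemma~\ref{twistepsilonlemma2}) evaluating $\sum_\mu \eps(1/2,\mu^{-1})\eps(1/2,\mu\chi_2)\mu(v)$. So your intuition about a stationary-phase phenomenon is right, but it manifests as constructive interference of many terms, not as isolation of one.
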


We will prove this Theorem over the following few subsections. Our proof shows that the implied constants can in fact be taken to be equal to $\frac32$.

\begin{remark}In the very special case $m=n$, our theorem reduces to  $h(\pi) \asymp q^{\frac12\lfloor \frac{n}{2} \rfloor}$. This fact was also proved by Templier~\cite{templier-large}, who used a somewhat different method than what we will use. Apart from Templier's result, there does not appear to have been any other previous work on the size of $h(\pi)$.
\end{remark}

\begin{remark}\label{rem:localcon}As mentioned in the introduction (see Conjecture~\ref{newconj}), we suspect that the bound on the left side of Theorem \ref{t:main} is actually the true order of magnitude of $h(\pi)$ up to $q^{n \epsilon}$, \emph{provided $m \le \lceil \frac{n}{2} \rceil.$} That is, we conjecture that \begin{equation}
\label{localresult} 1  \ll \max_{g \in \GL_2(F)} |W_\pi(g)| \ll_{\epsilon} q^{n \epsilon}, \qquad \text{whenever } m \le \lceil \frac{n}{2} \rceil.\end{equation}  For supercuspidal representations, this conjecture would follow if one knew square-root cancelation in the sum~\eqref{supercuspsum} below for the values of the Whittaker newform. In the range $m >\lceil \frac{n}{2} \rceil,$ we do not speculate about the true size of $h(\pi)$ at this point\footnote{In an earlier draft of this paper, we did speculate about the size of $h(\pi)$ in this range as well, but it was pointed out by Yueke Hu, and confirmed by our calculations, that those speculations were incorrect.} except to say that we believe that the lower bound in Theorem~\ref{t:main} can be somewhat improved. We will return to these questions, which we believe to be of independent interest, in future work.
\end{remark}

\begin{remark}\label{remchoice} Suppose that instead of the unramified character $\psi$ fixed above, we were to take a different character $\psi'$ given by $\psi'(x) = \psi(yx)$ for some $y \in F^\times$. Then there is a natural intertwining operator from $\mathcal{W}(\pi, \psi)$ to $\mathcal{W}(\pi, \psi')$ given by $W \mapsto W'$ where $W'(g) :=  W(a(y)g)$. This map takes newforms to newforms, i.e., $W_\pi'$ is the $K_1(\p^n)$-fixed vector in the model $\mathcal{W}(\pi, \psi')$. In particular as $h(\pi) = \sup_g|W_\pi(g)| =  \sup_g|W_\pi'(g)|$, we see that $h(\pi)$ does not depend on the choice of additive character $\psi$. \emph{Indeed, in our global application later, we will take a global additive character $\psi$ that may not have local conductor $\OF$ at all places.} The above remarks ensure that the results for $h(\pi)$ proved in this section would still be valid, despite the fact that they are only performed for a specific unramified choice of $\psi$.
\end{remark}

\subsection{An explicit set of representatives}

We now set about proving Theorem~\ref{t:main}. Since the result is trivially true for $n=0$, \emph{we henceforth assume that $n \ge 1.$}
\begin{definition}For each $t \in \Z$, $k \in \Z$ and $v \in \OF^\times$, we define $g_{t,k,v} = a(\varpi^t)wn(\varpi^{-k}v).$
\end{definition}

\begin{lemma}\label{disjoint}Put $k_n = \min(k,n-k)$. We have the disjoint union $$G = \bigsqcup_{t \in \Z}\bigsqcup_{0\le k \le n}\bigsqcup_{v \in \OF^\times/(1+ \p^{k_n})} Z N g_{t,k,v} K_1(\p^{n})= \bigsqcup_{t \in \Z}\bigsqcup_{0\le k \le n}\bigsqcup_{v \in \OF^\times/(1+ \p^{k_n})} Z N g_{t,k,v} K_2(\p^{n}).$$

\end{lemma}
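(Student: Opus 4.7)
The plan is to prove the $K_1(\p^n)$-statement first, then deduce the $K_2(\p^n)$-statement via a parallel argument. The $K_1$-claim splits into \emph{surjectivity} (every $g\in G$ lies in some $ZNg_{t,k,v}K_1(\p^n)$ with $0\le k\le n$) and \emph{disjointness} (distinct parameters give distinct double cosets). Three small matrix identities will do the heavy lifting: (I) $a(u)=z(u)\cdot\mat{1}{0}{0}{u^{-1}}$ with $\mat{1}{0}{0}{u^{-1}}\in K_1(\p^n)$ for $u\in\OF^\times$; (II) $a(u)w=w\cdot\mat{1}{0}{0}{u}$ with $\mat{1}{0}{0}{u}\in K_1(\p^n)$; and (III) $\mat{1}{0}{0}{u}\cdot n(x)=n(x/u)\cdot\mat{1}{0}{0}{u}$. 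Together these let one freely absorb unit diagonal factors into $K_1(\p^n)$ on the right and $Z$ on the left.

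For surjectivity, I would apply the Bruhat decomposition $G=B\sqcup BwN$. If $g\in B$, then modulo $ZN$ on the left $g$ is equivalent to $a(\varpi^t u)$ for some $t\in\Z$ and $u\in\OF^\times$, and (I) further reduces this to $a(\varpi^t)$. A direct $2\times 2$ calculation using the element $\mat{1}{0}{-\varpi^n}{1}\in K_1(\p^n)$ would show that $g_{t-2n,n,1}\cdot\mat{1}{0}{-\varpi^n}{1}$ is upper triangular and $ZN$-equivalent to $a(\varpi^t)$, so $a(\varpi^t)$ sits inside the double coset of $g_{t-2n,n,1}$. If instead $g\in BwN$, I reduce $g$ modulo $ZN$ to $a(\varpi^t u)wn(x)$; (II) and (III) push the $u$ past $w$ and the unipotent where it is absorbed as $\mat{1}{0}{0}{u}\in K_1(\p^n)$ on the right, leaving $a(\varpi^t)wn(xu^{-1})$. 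Writing $xu^{-1}=\varpi^{-k}v_0$ with $v_0\in\OF^\times$, the cases $k\le 0$ (absorb $n(xu^{-1})\in K_1(\p^n)$ to reach $g_{t,0,1}$), $1\le k\le n$ (already standard), and $k>n$ (apply a matrix manipulation analogous to the one above to reduce to the $B$-case, thence to $k=n$) are each handled explicitly.

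For disjointness, I would solve $zn\cdot g_{t,k,v}\cdot k_0=g_{t',k',v'}$ explicitly with $zn\in ZN$ and $k_0=\mat{\alpha}{\beta}{\gamma}{\delta}\in K_1(\p^n)$. Equating the bottom rows yields
\begin{equation*}
\lambda(\alpha+\varpi^{-k}v\gamma)=1,\qquad \lambda(\beta+\varpi^{-k}v\delta)=\varpi^{-k'}v',
\end{equation*}
so $(\beta+\varpi^{-k}v\delta)/(\alpha+\varpi^{-k}v\gamma)=\varpi^{-k'}v'$. For $1\le k\le n-1$ the denominator is a unit and the numerator has valuation exactly $-k$, forcing $k=k'$; the boundary cases $k\in\{0,n\}$ are separated by noting $g_{t,n,v}\in BK_1(\p^n)$ while $g_{t,0,1}\notin BK_1(\p^n)$, and each of these extremes has only one $v$-class since $k_n=0$. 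Once $k=k'$, the identity $(1+\p^{n-k})(1+\p^k)^{\pm1}=1+\p^{k_n}$ (with equality) pins $v/v'$ down to $1+\p^{k_n}$; equating the top rows (whose entries involve $x$ and $\lambda\varpi^t$) then pins down $t=t'$.

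The hardest step, I expect, will be the disjointness at intermediate $k$: checking that the residue class of $v$ modulo $1+\p^{k_n}$ is \emph{precisely} the finest invariant, which requires careful valuation bookkeeping in both numerator and denominator of the displayed ratio and ruling out any extra collapsing coming from the top-row constraint. The decomposition involving $K_2(\p^n)$ can then be proved in parallel by the same method, replacing (I)--(III) by their $K_2$-analogs: for $u\in\OF^\times$ one has $a(u)\in K_2(\p^n)$ directly, and $a(u)w=z(u)\cdot w\cdot a(u^{-1})$ with $a(u^{-1})\in K_2(\p^n)$. With these substitutions the surjectivity and disjointness arguments transplant verbatim, yielding the same indexing set $\{(t,k,v)\}$ for both decompositions.
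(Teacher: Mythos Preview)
Your approach---Bruhat decomposition for surjectivity, entry-by-entry comparison for disjointness---is exactly what the paper has in mind, though the paper simply declares the check ``routine'' and omits it entirely. Your surjectivity sketch and the handling of the boundary cases $k\in\{0,n\}$ are fine.

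There is, however, a real gap in your disjointness argument at intermediate $k$. From the bottom-row equations alone you \emph{cannot} conclude $v'/v\in 1+\p^{k_n}$. With $k_0=\mat{\alpha}{\beta}{\gamma}{\delta}\in K_1(\p^n)$ one has
\[
\frac{v'}{v}=\frac{\varpi^k\beta+v\delta}{v(\alpha+\varpi^{-k}v\gamma)}\in \delta\cdot\frac{1+\p^{k}}{1+\p^{n-k}}=\delta\,(1+\p^{k_n}),
\]
and the definition of $K_1(\p^n)$ constrains only $\alpha\in 1+\p^n$ and $\gamma\in\p^n$; the entry $\delta$ is a priori an \emph{arbitrary} unit. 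So the bottom row by itself gives no information on $v'/v$ at all. Your worry that the top-row constraint might cause ``extra collapsing'' has the direction reversed: the top row is what \emph{prevents} collapsing.

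The fix is to invert your order of deductions. From $\lambda(\alpha+\varpi^{-k}v\gamma)=1$ you get $\lambda\in 1+\p^{n-k}$. Comparing determinants, $\lambda^2\varpi^t\det(k_0)=\varpi^{t'}$ forces $t=t'$ and $\det(k_0)=\lambda^{-2}\in 1+\p^{n-k}$; since $\det(k_0)=\alpha\delta-\beta\gamma\equiv\delta\pmod{\p^n}$, this yields $\delta\in 1+\p^{n-k}$. \emph{Now} your identity $(1+\p^{n-k})(1+\p^k)=1+\p^{k_n}$ applies and pins $v'/v$ to $1+\p^{k_n}$ as desired. With this reordering the rest of your plan goes through.
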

\begin{proof}That any element of $G$ can be expressed in the given form follows from the Bruhat decomposition for $G$. The disjointness of the decomposition is a routine check based on comparing the matrix entries from different subsets. We omit the details, as they are elementary.
\end{proof}
Because of the above Lemma, it suffices to understand the values of  $W_\pi$ and $W_\pi^*$ at the matrices $g_{t,k,v}$ with $t \in \Z$, $0\le k \le n$, $v \in \OF^\times/(1+ \p^{\min(k,n-k)})$. In particular $h(\pi) = \max_{t,k,v}|W_\pi(g_{t,k,v})|$. We will soon see that we may further restrict $k$ to the range $0 \le k \le n/2$.

\begin{remark}\label{cpicompute1}The identity element of $G$ corresponds to $t=-2n$, $k=n$, $v=1$. Indeed we have the decomposition $$1 = z(-\varpi^{n})n(\varpi^{-n})g_{-2n,n,1}k$$ with $k \equiv 1 \pmod{\varpi^n}.$ This shows in particular that $$W_\pi(g_{-2n,n,1})=W_\pi^*(g_{-2n,n,1})=\omega_\pi(-\varpi^{-n})\psi(-\varpi^{-n}).$$

\end{remark}

\begin{definition}Let $\langle \ , \rangle$ be the (unique) invariant inner product on $\pi$ normalized in the Whittaker model as follows. For $W_1, W_2 \in \mathcal{W}(\pi\subscriptp, \psi)$, we define $$\langle W_1, W_2 \rangle = \int_{F^\times} W_1(a(t)) \overline{W_2(a(t))}d^\times t.$$
\end{definition}

\begin{lemma}\label{normwhittaker}Let $n\ge 1$. We  have \begin{enumerate}

\item $\langle W_\pi, W_\pi \rangle = \langle W_\pi^*, W_\pi^*\rangle.$

\item  $1 \le \langle W_\pi, W_\pi\rangle \le 2.$
  \item If $L(s, \pi)=1$, then  $\langle W_\pi, W_\pi\rangle = 1.$
 \end{enumerate}
\end{lemma}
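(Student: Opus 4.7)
The plan is to prove all three parts as direct consequences of the explicit formulas for the diagonal values of $W_\pi$ and $W_\pi^*$ supplied by Lemma~\ref{whitdiagonal}, after parametrising $t \in F^\times$ as $t = \varpi^s v$ with $s \in \Z$ and $v \in \OF^\times$ and using $\vol(\OF^\times) = 1$. No deeper machinery (functional equation, Atkin--Lehner) is required at this stage.

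For part~(1), I would inspect Lemma~\ref{whitdiagonal} case by case and observe that $|W_\pi(a(t))| = |W_\pi^*(a(t))|$ for every $t \in F^\times$. In the unramified-Steinberg case the two formulas agree on the nose. In the ramified principal series case $\pi = \chi_1 \boxplus \chi_2$ with $a(\chi_1) > a(\chi_2) = 0$, the two values differ only by replacing the unit-modulus factor $\chi_1(\varpi^s v)$ by $\chi_2(\varpi^s)$, both of absolute value $1$ because $\pi$ (hence each $\chi_i$) is unitary. In the $L(s,\pi) = 1$ case only $s = 0$ contributes, and the values $\omega_\pi(v)$ and $1$ again have modulus $1$ (unitarity of $\omega_\pi$). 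Integrating $|W_\pi(a(t))|^2 = |W_\pi^*(a(t))|^2$ over $F^\times$ gives $\langle W_\pi, W_\pi\rangle = \langle W_\pi^*, W_\pi^*\rangle$.

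For parts~(2) and~(3), I would simply compute $\langle W_\pi, W_\pi\rangle = \int_{F^\times}|W_\pi(a(t))|^2\,d^\times t$ in each of the three possible types of $\pi$. For the unramified Steinberg twist one gets $\sum_{s \ge 0} q^{-2s} = q^2/(q^2 - 1) \in [1, 4/3]$. For the ramified principal series with one unramified factor, the $v$-integral is trivial and one gets $\sum_{s \ge 0} q^{-s} = q/(q-1) \in [1, 2]$, with equality at $q = 2$. For $L(s, \pi) = 1$ only the $s = 0$ term survives and the integral equals $\vol(\OF^\times) = 1$, yielding part~(3) on the nose. Combining these three computations gives $1 \le \langle W_\pi, W_\pi\rangle \le 2$, which is part~(2).

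There is no real obstacle: the proof is a short case analysis on top of the classification of generic unitarizable $\pi$ with $n \ge 1$ listed just before Definition~\ref{defn:normalized-W-newform}. The only delicate point worth flagging is that the upper bound $\le 2$ is sharp precisely in the ramified principal-series case at residue characteristic $q = 2$, where $\sum_{s \ge 0} 2^{-s} = 2$; in particular the constant $2$ in part~(2) cannot be replaced by anything smaller without further assumptions on $q$.
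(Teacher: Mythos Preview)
Your proposal is correct and follows exactly the approach the paper takes: the paper's proof simply reads ``All the parts are immediate from Lemma~\ref{whitdiagonal}'', and you have spelled out that immediacy case by case. Your computations and the observation about sharpness at $q=2$ are accurate.
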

\begin{proof}All the parts are immediate from Lemma~\ref{whitdiagonal}.
\end{proof}

\begin{lemma}\label{conjlemma}For all $g \in G$, we have $W_\pi^*(g) = c_\pi W_\pi(g \mat{0}{1}{\varpi^n}{0}),$ where $|c_\pi|=1$.
\end{lemma}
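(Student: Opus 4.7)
The plan is to produce, by right-translating $W_\pi$ by a suitable matrix, a Whittaker function that is $K_2(\p^n)$-invariant, identify it with $W_\pi^*$ up to scalar by uniqueness, and pin down the modulus of the scalar using the invariant inner product. Concretely, I set $\eta := \mat{0}{1}{\varpi^n}{0}$ and define $W'(g) := W_\pi(g\eta)$. Because right-translation preserves $\mathcal{W}(\pi,\psi)$ and commutes with left translation by $N$, one has $W' \in \mathcal{W}(\pi,\psi)$ and $W'(n(x)g) = \psi(x)\, W'(g)$. The conjugation relation $K_2(\p^n) = \eta K_1(\p^n)\eta^{-1}$ recorded in \S\ref{sec:2-notations} gives $\eta^{-1}k\eta \in K_1(\p^n)$ for every $k \in K_2(\p^n)$, so $W'(gk) = W_\pi\bigl(g\eta \cdot \eta^{-1}k\eta\bigr) = W_\pi(g\eta) = W'(g)$; that is, $W'$ is right-$K_2(\p^n)$-invariant.

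Right-translation by $\eta$ is a linear bijection $\mathcal{W}(\pi,\psi)^{K_1(\p^n)} \to \mathcal{W}(\pi,\psi)^{K_2(\p^n)}$ with inverse given by right-translation by $\eta^{-1}$, so by the defining property of the conductor $n$, both spaces are one-dimensional. Since $W' \neq 0$ lies in the latter line and $W_\pi^*$ spans it, we obtain $W' = c\, W_\pi^*$ for a nonzero constant $c \in \C$, and setting $c_\pi := c^{-1}$ yields the stated identity $W_\pi^*(g) = c_\pi\, W_\pi(g\eta)$.

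It remains to show $|c_\pi| = 1$. For this I would invoke the standard fact that, because $\pi$ is unitarizable and generic, the diagonal-integral pairing $\langle\cdot,\cdot\rangle$ on $\mathcal{W}(\pi,\psi)$ coincides (up to a positive scalar) with the essentially unique $G$-invariant inner product on $\pi$, and is therefore itself $G$-invariant. Applying this invariance at the element $\eta$ gives
\[
|c|^2\,\langle W_\pi^*, W_\pi^*\rangle \;=\; \langle \pi(\eta)W_\pi, \pi(\eta)W_\pi\rangle \;=\; \langle W_\pi, W_\pi\rangle,
\]
and combining with Lemma~\ref{normwhittaker}(1) yields $|c| = 1$, hence $|c_\pi| = 1$. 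The main obstacle is precisely this last step: one must justify $G$-invariance of the diagonal-integral pairing beyond its manifest $B$-invariance (the $N$- and $Z$-invariance being immediate from $|\psi|=|\omega_\pi|=1$, the $A$-invariance a change of variable). This is classical via the Kirillov model together with Schur's lemma --- any $B$-invariant Hermitian form on a generic irreducible is unique up to scalar, so the integral pairing must agree, up to a positive scalar, with the $G$-invariant inner product guaranteed by unitarizability --- but it is the only input not internal to the local setup recorded so far.
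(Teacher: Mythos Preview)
Your proof is correct and follows essentially the same approach as the paper: define $W'(g) = W_\pi(g\eta)$, observe it is $K_2(\p^n)$-invariant and hence a scalar multiple of $W_\pi^*$, and then use $G$-invariance of $\langle\,,\,\rangle$ together with Lemma~\ref{normwhittaker}(1) to force the scalar to have modulus one. In fact you are more careful than the paper, which simply asserts the $G$-invariance of the diagonal-integral pairing; your remark that this requires the Kirillov-model/Schur's-lemma argument is well taken.
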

\begin{proof}
It is clear that the function $W'$ defined via $W'(g) = W_\pi(g \mat{0}{1}{\varpi^n}{0})$ lies in $\mathcal{W}(\pi,\psi)$ and is right invariant by $K_2(\varpi^n).$ It follows that $W_\pi^*$ is a multiple of $W'$. Moreover, by the $G$-invariance of
$\langle , \rangle$
on $\mathcal{W}(\pi,\psi)$, we have $\langle W', W' \rangle = \langle W_\pi, W_\pi \rangle$. The result now follows from the first part of Lemma~\ref{normwhittaker}.
\end{proof}

\begin{lemma}\label{cpicompute2}Let $t \in \Z$, $0\le k \le n$, $v \in \OF^\times$. Then $$
W_\pi^*(g_{t,k,v}) = c_\pi \omega_\pi(\varpi^{n-k}v)\psi(-\varpi^{t+k}v^{-1}) W_\pi(g_{t+2k-n,n-k,-v}).$$ In particular, $|W_\pi^*(g_{t,k,v})| = |W_\pi(g_{t+2k-n,n-k,-v})|$.
\end{lemma}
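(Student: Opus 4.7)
The plan is to reduce everything to Lemma~\ref{conjlemma} via a single Bruhat-style matrix identity. Starting from the identity $W_\pi^*(g_{t,k,v}) = c_\pi W_\pi(g_{t,k,v}\mat{0}{1}{\varpi^n}{0})$ provided by Lemma~\ref{conjlemma}, the first step is to compute the matrix product on the right explicitly. Unwinding $g_{t,k,v} = a(\varpi^t) w n(\varpi^{-k}v)$, a short calculation yields
$$M := g_{t,k,v}\mat{0}{1}{\varpi^n}{0} = \mat{\varpi^{n+t}}{0}{-\varpi^{n-k}v}{-1}.$$

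The heart of the proof is then to exhibit the factorization
$$M \;=\; z(\varpi^{n-k}v)\, n(-\varpi^{t+k}v^{-1})\, g_{t+2k-n,\,n-k,\,-v}\cdot k_0$$
for a suitable $k_0 \in K_1(\p^n)$, after which the claimed formula follows immediately from the standard transformation law $W_\pi(z(a)n(b)\, g\, k_0) = \omega_\pi(a)\psi(b) W_\pi(g)$ for $k_0 \in K_1(\p^n)$. To verify the factorization I would compute the product of the three leftmost factors directly and observe that it agrees with $M$ in all entries except the $(2,2)$ entry, which comes out to $v^2$ rather than $-1$. Solving for $k_0$ by left-multiplying $M$ by the inverse of the leftmost product then gives
$$k_0 = \mat{1}{0}{0}{-v^{-2}},$$
which visibly lies in $K_1(\p^n)$ since $v \in \OF^\times$.

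Combining these steps yields $W_\pi(M) = \omega_\pi(\varpi^{n-k}v)\,\psi(-\varpi^{t+k}v^{-1})\, W_\pi(g_{t+2k-n,\,n-k,\,-v})$, and multiplying through by $c_\pi$ produces the asserted identity. The absolute value claim follows at once, since $|c_\pi|=1$ by Lemma~\ref{conjlemma}, the central character $\omega_\pi$ is unitary, and $|\psi(\cdot)|=1$. I do not anticipate any substantive obstacle: the argument is a routine but sign-sensitive matrix manipulation. The only real point of care is guessing (or checking) the correct parameter triple $(t+2k-n,\, n-k,\, -v)$ together with the scalar $\varpi^{n-k}v$ and the unipotent entry $-\varpi^{t+k}v^{-1}$, so that the product matches $M$ in all four coordinates up to a right factor lying in $K_1(\p^n)$.
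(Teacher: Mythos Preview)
Your proposal is correct and follows exactly the same route as the paper: both reduce the claim to the single matrix identity expressing $g_{t,k,v}\mat{0}{1}{\varpi^n}{0}$ as $z(\varpi^{n-k}v)\,n(-\varpi^{t+k}v^{-1})\,g_{t+2k-n,n-k,-v}\cdot k_0$ with $k_0\in K_1(\p^n)$, after which the $K_1(\p^n)$-invariance and the transformation under $Z$ and $N$ finish the argument. Your computed $k_0=\mat{1}{0}{0}{-v^{-2}}$ in fact differs by a sign from the paper's stated $\mat{1}{0}{0}{v^{-2}}$; your version is the correct one, though either matrix lies in $K_1(\p^n)$, so the conclusion is unaffected.
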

\begin{proof}This follows from the equation
$$g_{t,k,v} \mat{0}{1}{\varpi^n}{0} = n(-\varpi^{t+k}v^{-1})z(\varpi^{n-k}v)g_{t+2k-n,n-k,-v} \mat{1}{0}{0}{v^{-2}}.$$
\end{proof}

\begin{remark}Combined with the previous remark, this shows that $$|W_\pi(g_{-n,0,-1})|=|W_\pi^*(g_{-n,0,-1})|=1.$$

\end{remark}

\begin{lemma}\label{supportlemma}Let  $0\le k \le n$, $v \in \OF^\times$. Then for any integer $t<-k-n$, we have $W_\pi^*(g_{t,k,v}) = W_\pi(g_{t,k,v}) = 0$.
\end{lemma}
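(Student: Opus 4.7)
The plan is to exploit two properties of $W_\pi$ (and identically of $W_\pi^*$): the left equivariance $W_\pi(n(x)g) = \psi(x)W_\pi(g)$ under the unipotent subgroup $N$, and the right invariance under $K_1(\p^n)$ (resp.\ $K_2(\p^n)$). If one can produce an $x \in F$ with $\psi(x)\ne 1$ such that the conjugate $g_{t,k,v}^{-1} n(x) g_{t,k,v}$ lies in $K_1(\p^n)$, then the chain
$$\psi(x)\, W_\pi(g_{t,k,v}) = W_\pi(n(x) g_{t,k,v}) = W_\pi\bigl(g_{t,k,v}\cdot g_{t,k,v}^{-1} n(x) g_{t,k,v}\bigr) = W_\pi(g_{t,k,v})$$
immediately forces $W_\pi(g_{t,k,v}) = 0$. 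The same argument with $K_2(\p^n)$ in place of $K_1(\p^n)$ handles $W_\pi^*$, so it suffices to arrange the conjugate to lie in $K_1(\p^n) \cap K_2(\p^n)$.

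The concrete step is therefore to compute the conjugate explicitly. Writing $g_{t,k,v} = \mat{0}{\varpi^t}{-1}{-\varpi^{-k} v}$ and inverting (the determinant is $\varpi^t$), a short matrix multiplication yields
$$g_{t,k,v}^{-1}\, n(x)\, g_{t,k,v} = \mat{1 + \varpi^{-t-k} v x}{\varpi^{-t-2k} v^2 x}{-\varpi^{-t} x}{1 - \varpi^{-t-k} v x}.$$
This matrix has determinant $1$, so membership in $K_1(\p^n)$ or $K_2(\p^n)$ reduces to valuation conditions on the four entries. Using $0 \le k \le n$, one checks that the strictest of these inequalities is $v(x) \ge n + t + k$; the remaining constraints (from the top-right, bottom-left, and bottom-right entries) all follow from this one, so the conjugate lies in $K_1(\p^n) \cap K_2(\p^n)$ as soon as $v(x) \ge n+t+k$.

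To finish, assume $t < -k - n$, so that $n + t + k \le -1$. Then $\p^{n+t+k} \supseteq \p^{-1}$, and since $\psi$ has conductor $\OF$ it is nontrivial on $\p^{-1}$; hence we may choose $x \in \p^{n+t+k}$ with $\psi(x) \ne 1$. Applying the opening observation to this $x$ yields $W_\pi(g_{t,k,v}) = W_\pi^*(g_{t,k,v}) = 0$, as claimed. I do not anticipate any serious obstacle: the proof reduces to a direct matrix calculation followed by the standard ``cancelling character'' trick, and the only subtlety is the valuation bookkeeping that identifies $v(x) \ge n+t+k$ as the binding constraint.
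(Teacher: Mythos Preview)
Your proof is correct and takes essentially the same approach as the paper, which simply states that one can find $n(x) \in N$ and an element of the congruence subgroup with $g_{t,k,v} = n(x)g_{t,k,v}k$ and $\psi(x)\neq 1$, omitting the elementary details. You have supplied exactly those details: the explicit conjugation formula, the verification (using $0\le k\le n$) that $v(x)\ge n+t+k$ is the binding constraint among the four entry conditions, and the observation that this threshold is negative when $t<-k-n$ so that an $x$ with $\psi(x)\neq 1$ exists.
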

\begin{proof} If $t<-k-n$, we can write down elements $n(x) \in N$ , $k \in K_2(\p^n)$, such that $\psi(x) \neq 1$ and $g_{t,k,v} = n(x)g_{t,k,v}k$. We omit the elementary details.
\end{proof}

\subsection{The local functional equation}The local functional equation, due to Jacquet and Langlands~\cite{MR0401654}, will be key for our approach.

\begin{theorem}[Jacquet-Langlands]For each $W' \in
\mathcal{W}(\pi\subscriptp,\psi)$, each character $\mu$ of
$F^\times$, and each
complex number $s \in \mathbb{C}$,
the local zeta
integral
\[
Z(W',s,\mu) = \int_{F^\times} W'(a(y)) \mu(y) |y|^{s-\frac12} \, d^\times y
\]
satisfies
\begin{equation}\label{gl2-loc-func-eqn}
  \frac{Z(W',s,\mu)}{L(s,\pi\subscriptp \twist \mu)}\eps(s,\pi\subscriptp \twist \mu)  = \frac{Z(w \cdot W',1-s,\mu^{-1}\omega_\pi^{-1})}{L(1-s, \pi\subscriptp \twist \mu^{-1}\omega_\pi^{-1})},
\end{equation}
and moreover, each side is a polynomial in the variables $q^s, q^{-s}$.
\end{theorem}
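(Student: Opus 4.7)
The plan is to follow the standard approach via uniqueness of the Whittaker model, originally carried out by Jacquet and Langlands. First I would establish that for $\Re(s)$ sufficiently large the zeta integral $Z(W',s,\mu)$ converges absolutely. The key inputs are: (i) for any $W' \in \mathcal{W}(\pi,\psi)$, the function $y \mapsto W'(a(y))$ vanishes when $|y|$ is sufficiently large, because $W'$ is smooth so there exists $N$ with $\pi(n(x))W' = W'$ for all $x \in \p^N$, whence the identity $W'(a(y)) = W'(a(y)n(x)) = W'(n(xy)a(y)) = \psi(xy)W'(a(y))$ forces $W'(a(y)) = 0$ as soon as $v(y) < -N$; and (ii) near $y = 0$, the restriction $W'|_A$ admits a finite asymptotic expansion into terms of the form $\chi_i(y)|y|^{s_i}$, where the characters $\chi_i$ and exponents $s_i$ come from the Jacquet module of $\pi$. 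Summing the resulting geometric series in $q^{-s}$ shows that $Z(W',s,\mu)$ extends to a rational function of $q^{-s}$, and I would then define $L(s,\pi\otimes\mu)$ as a generator of the fractional ideal in $\mathbb{C}[q^s,q^{-s}]$ generated by $\{Z(W',s,\mu): W' \in \mathcal{W}(\pi,\psi)\}$, so that $Z(W',s,\mu)/L(s,\pi\otimes\mu)$ is automatically a polynomial in $q^s, q^{-s}$.

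For the functional equation itself I would invoke uniqueness of Whittaker functionals on $\pi$. Fix $s$ generic and consider the two linear forms on $\mathcal{W}(\pi,\psi)$
$$\Lambda_1(W') := \frac{Z(W',s,\mu)}{L(s,\pi\otimes\mu)}, \qquad \Lambda_2(W') := \frac{Z(w\cdot W',1-s,\mu^{-1}\omega_\pi^{-1})}{L(1-s,\pi\otimes\mu^{-1}\omega_\pi^{-1})}.$$
A direct change-of-variables calculation (using the way $\pi(a(y_0))$ and $\pi(z(t))$ translate the argument of $W'(a(\cdot))$, together with the $\psi$-twist under $\pi(n(x))$) shows that both $\Lambda_j$ are nonzero and, after the appropriate twist by a character of $B = ZNA$, share the same equivariance property. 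Uniqueness of such equivariant functionals (essentially the Gelfand-Kazhdan / Shalika multiplicity-one theorem for generic irreducible admissible representations of $\GL_2(F)$) then forces $\Lambda_1 = c(s)\Lambda_2$ for a scalar $c(s)$, and one defines $\eps(s,\pi\otimes\mu,\psi) := c(s)$.

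Finally, the polynomial claim on both sides of \eqref{gl2-loc-func-eqn} is built into the definition of the $L$-factor, and property (1) of the $\eps$-factor listed in the excerpt follows because the ratio $\eps(s,\pi\otimes\mu)$, being a rational function of $q^{-s}$ with no zeros or poles, must be a monomial of the stated shape. The main obstacle in the whole argument is the uniqueness-of-Whittaker step: one must precisely identify the common equivariance property on $\mathcal{W}(\pi,\psi)$ shared by $\Lambda_1$ and $\Lambda_2$ and invoke a genuine multiplicity-one theorem, which is really the technical heart of the proof. The remaining work---convergence, meromorphic continuation, the existence of the $L$-factor as a GCD, and the shape of $\eps$---is systematic bookkeeping with Haar measures, transformation properties of $\psi$, and Jacquet-module exponents.
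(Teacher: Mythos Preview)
Your proposal sketches the standard Jacquet--Langlands argument and is essentially correct as an outline of how that theorem is proved in \cite{MR0401654}. However, you should be aware that the paper itself does \emph{not} prove this statement: it is quoted as a known result of Jacquet and Langlands, with the citation given in the sentence immediately preceding the theorem. So there is no ``paper's own proof'' to compare against; the theorem functions as a black box input to the subsequent computations (the basic identity of Proposition~\ref{basicformula}). Your sketch is a reasonable summary of the original proof, but for the purposes of this paper a citation suffices.
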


Now, for each $t \in \Z$, $0\le k \le n$,  consider the function on $\OF^\times$ given by $v \mapsto W_\pi(g_{t,k,v})$. It is easy to check that this function depends only on $\OF^\times / (1 + \p^k)$. So, by Fourier inversion, it follows that for each character $\mu \in \tilde{X}(k)$, there exists a complex number $c_{t,k}(\mu)$ such that \begin{equation}\label{Wpiexpand}W_\pi(g_{t,k,v}) = \sum_{\mu \in \tilde{X}(k)} c_{t,k}(\mu) \mu(v).\end{equation} Similarly, we may define a complex number $c_{t,k}^*(\mu)$ such that \begin{equation}\label{Wpistexpand}W_\pi^*(g_{t,k,v}) = \sum_{\mu \in \tilde{X}(k)} c_{t,k}^*(\mu) \mu(v).\end{equation}

Moreover, by the orthogonality of characters, we have:

\begin{equation}\label{ctkdef} c_{t,k}(\mu) = \int_{\OF^\times}W_\pi(g_{t,k,v})\mu^{-1}(v) d^\times v \end{equation}

\begin{equation}\label{ctkstdef} c_{t,k}^*(\mu) = \int_{\OF^\times}W_\pi^*(g_{t,k,v})\mu^{-1}(v) d^\times v \end{equation}

Note that $c_{t,k}(\mu) = c_{t,k}^*(\mu) = 0$ if $t<-k-n$; this follows from Lemma~\ref{supportlemma}.

\begin{remark}We define $c_{t,k}(\mu)$ and $c_{t,k}^*(\mu)$  for all characters $\mu \in \tilde{X}$ using the equations~\eqref{ctkdef} and \eqref{ctkstdef}. It is immediate that $c_{t,k}(\mu) = c_{t,k}^*(\mu) = 0$ whenever $a(\mu)>k$.
\end{remark}

\subsection{The basic identity}
Now, we apply~\eqref{gl2-loc-func-eqn} with $W' = w.n(\varpi^{-k}).W_\pi$. Let $\mu \in \tilde{X}(k)$, $\omega_\pi \in \tilde{X}$. We have \begin{align*}Z(W',s,\mu) &= \int_{F^\times} W_\pi(a(y)w.n(\varpi^{-k})) \mu(y) |y|^{s-\frac12} \, d^\times y \\ &= \sum_{t=-\infty}^\infty q^{t(\frac12-s)} \int_{\OF^\times} W_\pi(a(\varpi^tv)w.n(\varpi^{-k})) \mu(v) \, d^\times v \\&= \sum_{t=-\infty}^\infty q^{t(\frac12-s)} \int_{\OF^\times} W_\pi(a(\varpi^t)w.n(\varpi^{-k}v^{-1})) \mu(v) \, d^\times v \\&= \sum_{t=-\infty}^\infty q^{t(\frac12-s)} \int_{\OF^\times} W_\pi(g_{t,k,v}) \mu^{-1}(v) \, d^\times v
\\&=  \sum_{t=-\infty}^\infty q^{t(\frac12-s)} c_{t,k}(\mu) \\&=  \sum_{t=-k-n}^\infty q^{t(\frac12-s)} c_{t,k}(\mu)
\end{align*}

On the other hand \begin{align*}Z(w \cdot W',1-s,\mu^{-1}\omega_\pi^{-1}) &= \int_{F^\times} W_\pi(-a(y)n(\varpi^{-k})) \omega_\pi^{-1}(y)\mu^{-1}(y) |y|^{\frac12-s} \, d^\times y \\ &= \omega_\pi(-1)\sum_{a=-\infty}^\infty \int_{\OF^\times} W_\pi(a(\varpi^av)n(\varpi^{-k})) \omega_\pi^{-1}(\varpi^av)\mu^{-1}(v) q^{-a(\frac12-s)} \, d^\times v \\&= \omega_\pi(-1)\sum_{a=0}^\infty q^{-a(\frac12-s)}\int_{\OF^\times} W_\pi(a(\varpi^av))\psi(\varpi^{a-k}v) \omega_\pi^{-1}(\varpi^av)\mu^{-1}(v) \, d^\times v \\&= \omega_\pi(-1)\sum_{a=0}^\infty q^{-a(\frac12-s)}W_\pi(a(\varpi^a))\int_{\OF^\times} \psi(\varpi^{a-k}v) \mu^{-1}(v) \, d^\times v.
\end{align*}

Now, using~\eqref{gl2-loc-func-eqn} and the formula $\eps(s,\pi\subscriptp \twist \mu) = \eps(1/2,\pi\subscriptp \twist \mu)q^{a(\mu\pi)(\frac12-s)}$ we immediately arrive at an identity relating the various $c_{t,k}(\mu)$ with terms of $L$ and $\eps$-factors. Moreover, one can repeat the whole sequence of steps with $W_\pi^*$ instead of $W_\pi$ to get a dual identity. We collect both identities together in the following Proposition, omitting the easy proof.

\begin{proposition}[The basic identity]\label{basicformula} Let $0\le k \le n$. For each integer $r$ and each character $\mu \in \tilde{X}(k)$, let the quantities $c_{t,k}(\mu)$, $c_{t,k}^*(\mu)$ be as in the previous subsection.
Suppose that $\omega_\pi \in \tilde{X}$. Then the following identities (of polynomials in $q^s$, $q^{-s}$) hold.

\begin{equation}\label{basicid}\begin{split}&\varepsilon(\frac12, \mu\pi)\left(\sum_{t=-k-n}^\infty q^{(t+a(\mu\pi))(\frac12-s)} c_{t,k}(\mu)\right) L(s, \mu \pi)^{-1} \\&= \omega_\pi(-1)\left(\sum_{a=0}^\infty W_\pi(a(\varpi^a)) q^{-a(\frac12-s)}G(\varpi^{a-k}, \mu^{-1})\right) L(1-s, \mu^{-1}\omega_\pi^{-1}\pi)^{-1}.\end{split}\end{equation}

\begin{equation}\label{basicideqst}\begin{split}&\varepsilon(\frac12, \mu\omega_\pi^{-1}\pi)\left(\sum_{t=-k-n}^\infty q^{(t+a(\mu\omega_\pi^{-1}\pi))(\frac12-s)} c^*_{t,k}(\mu)\right) L(s, \mu \omega_\pi^{-1} \pi)^{-1} \\&= \omega_\pi(-1)\left(\sum_{a=0}^\infty W_\pi(a(\varpi^a))q^{-a(\frac12-s)}G(\varpi^{a-k}, \mu^{-1})\right) L(1-s, \mu^{-1}\pi)^{-1} .\end{split}\end{equation}
\end{proposition}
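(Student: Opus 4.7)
My plan is to apply the local functional equation \eqref{gl2-loc-func-eqn} to two carefully chosen vectors in $\mathcal{W}(\pi, \psi)$: the vector $W' = w \cdot n(\varpi^{-k}) \cdot W_\pi$ to derive \eqref{basicid}, and the analogous $W'' = w \cdot n(\varpi^{-k}) \cdot W_\pi^*$ to derive \eqref{basicideqst}. In each case, both zeta integrals are unfolded by the substitution $y = \varpi^t v$ with $v \in \OF^\times$.

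For \eqref{basicid}, the computation is essentially spelled out already in the paragraphs preceding the statement. The key ingredient on the left-hand side is the matrix identity
\[
a(\varpi^t v)\, w\, n(\varpi^{-k}) = a(\varpi^t)\, w\, n(\varpi^{-k} v^{-1})\, \mat{1}{0}{0}{v}
\]
combined with the observation $\mat{1}{0}{0}{v} \in K_1(\p^n)$, which gives $Z(W',s,\mu) = \sum_{t \ge -k-n} q^{t(1/2-s)} c_{t,k}(\mu)$ after the change of variable $v \mapsto v^{-1}$ (the lower bound coming from Lemma~\ref{supportlemma}). On the right-hand side one uses $w^2 = z(-1)$, the left $\psi$-equivariance of Whittaker functions together with $a(y)n(\varpi^{-k}) = n(\varpi^{-k}y)a(y)$, and the case-check $W_\pi(a(\varpi^a v)) = \omega_\pi(v)\, W_\pi(a(\varpi^a))$ for $v \in \OF^\times$ that is visible in Lemma~\ref{whitdiagonal}, to reduce the inner integral to the Gauss sum $G(\varpi^{a-k}, \mu^{-1})$. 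Combining via \eqref{gl2-loc-func-eqn} and clearing the $\varepsilon$-factor by its scaling $\varepsilon(s,\pi\mu) = \varepsilon(1/2,\pi\mu)\, q^{a(\pi\mu)(1/2-s)}$ produces \eqref{basicid}, and the fact that both sides are polynomials in $q^{\pm s}$ is already guaranteed by the functional equation.

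For the second identity, the analogous computation with $W_\pi^*$ requires one twist, because $\mat{1}{0}{0}{v}$ is not in $K_2(\p^n)$. In its place I would use the alternative decomposition
\[
a(\varpi^t v)\, w\, n(\varpi^{-k}) = z(v) \cdot g_{t,k,v^{-1}} \cdot a(v^{-1}),
\]
in which $a(v^{-1}) \in K_2(\p^n)$ does fix $W_\pi^*$; this yields $W_\pi^*(a(\varpi^t v)\, w\, n(\varpi^{-k})) = \omega_\pi(v)\, W_\pi^*(g_{t,k,v^{-1}})$, and hence $Z(W'',s,\mu) = \sum_t q^{t(1/2-s)}\, c_{t,k}^*(\mu\omega_\pi)$ after the same change of variable. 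The right-hand side computation goes through verbatim and, because the extra $\omega_\pi^{-1}(y)$ now survives the reduction to the diagonal, produces $G(\varpi^{a-k}, (\mu\omega_\pi)^{-1})$. Applying the functional equation with character $\mu$ and then substituting $\mu \mapsto \mu\omega_\pi^{-1}$ absorbs these $\omega_\pi$-twists simultaneously on the Fourier coefficient and on the Gauss sum, while replacing $\pi\mu$ by $\pi\mu\omega_\pi^{-1}$ and $\pi\mu^{-1}\omega_\pi^{-1}$ by $\pi\mu^{-1}$, thereby delivering \eqref{basicideqst}. The only real difficulty throughout is the bookkeeping: at each stage one must track which character is paired with which Fourier coefficient, apply the two matrix identities above correctly, and invoke $\omega_\pi \in \tilde{X}$ (equivalently $\omega_\pi(\varpi) = 1$) to eliminate spurious factors of $\omega_\pi(\varpi^t)$ and $\omega_\pi(\varpi^a)$ that would otherwise pollute the identities.
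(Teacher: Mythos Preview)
Your proposal is correct and follows exactly the paper's approach: apply the local functional equation to $W' = w \cdot n(\varpi^{-k}) \cdot W_\pi$ (respectively its $*$-analogue), unfold both zeta integrals via $y = \varpi^t v$, and use the scaling $\varepsilon(s,\pi\mu) = \varepsilon(1/2,\pi\mu)\, q^{a(\pi\mu)(1/2-s)}$. Your treatment of \eqref{basicideqst} is in fact more careful than the paper's terse ``repeat the whole sequence of steps with $W_\pi^*$'': your observation that $\mat{1}{0}{0}{v} \notin K_2(\p^n)$ forces the rewriting $\mat{1}{0}{0}{v} = z(v)\,a(v^{-1})$ with $a(v^{-1}) \in K_2(\p^n)$, and that the resulting $\omega_\pi$-twist on both the Fourier coefficient and the Gauss sum is absorbed by the substitution $\mu \mapsto \mu\omega_\pi^{-1}$, is exactly the bookkeeping the paper suppresses.
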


\begin{remark}In~\eqref{basicideqst}, we have used the fact that  $W_\pi^*(a(\varpi^a)) = W_\pi(a(\varpi^a)).$
\end{remark}

\begin{remark}Implicit in the statement of the above Proposition is the fact that the total quantity on either side of each $=$ sign  is a \emph{polynomial} (and not  a power/Laurent series) in the variables $q^s$, $q^{-s}$.
\end{remark}

\begin{corollary}\label{prevtil}Suppose that $\omega_\pi \in \tilde{X}$. We have $W_{\tilde{\pi}}(g_{t,k,v}) =  W_\pi^*(g_{t,k,v}).$
\end{corollary}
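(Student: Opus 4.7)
The plan is to apply the basic identity in two complementary ways and compare the resulting polynomial identities. Specifically, I would apply \eqref{basicid} to the contragredient $\tilde{\pi} = \omega_\pi^{-1}\pi$ — writing the corresponding Fourier coefficients as $\tilde{c}_{t,k}(\mu)$ in analogy with \eqref{Wpiexpand} — and apply \eqref{basicideqst} to $\pi$ itself. Uniqueness of the polynomial representation in $q^{\pm s}$ would then force $\tilde{c}_{t,k}(\mu) = c^*_{t,k}(\mu)$ for every admissible $\mu \in \tilde{X}(k)$, from which Fourier inversion on $\OF^\times / (1+\p^{\min(k,n-k)})$ yields $W_{\tilde{\pi}}(g_{t,k,v}) = W_\pi^*(g_{t,k,v})$.

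First I would verify that all the $\varepsilon$-factors, $L$-factors, Gauss-sum factors, and global signs match across the two identities. Using $\omega_{\tilde{\pi}} = \omega_\pi^{-1}$, the substitutions $\mu\tilde{\pi} = \mu\omega_\pi^{-1}\pi$ and $\mu^{-1}\omega_{\tilde{\pi}}^{-1}\tilde{\pi} = \mu^{-1}\pi$ make the LHS factors $\varepsilon(1/2,\mu\tilde\pi)$, $L(s,\mu\tilde\pi)^{-1}$, and the conductor shift $a(\mu\tilde{\pi})$ agree with those in \eqref{basicideqst} for $\pi$, while on the right the $L$-factor $L(1-s,\mu^{-1}\pi)^{-1}$ matches; $G(\varpi^{a-k}, \mu^{-1})$ is common; and $\omega_{\tilde{\pi}}(-1) = \omega_\pi(-1)$ since these are signs.

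The one nontrivial ingredient is the equality of diagonal Whittaker values $W_{\tilde{\pi}}(a(\varpi^a)) = W_\pi^*(a(\varpi^a))$ for every $a \ge 0$, under the hypothesis $\omega_\pi \in \tilde{X}$. I would check this case-by-case using Lemma~\ref{whitdiagonal} and the explicit description of $\tilde{\pi}$: for $\pi = \chi\,\St$ with $\chi$ unramified, $\tilde{\pi} = \chi^{-1}\St$ and the equality reduces to $\chi(\varpi)^2 = \omega_\pi(\varpi) = 1$; for the principal series $\pi = \chi_1\boxplus\chi_2$ with $a(\chi_1)>0=a(\chi_2)$, the contragredient is $\chi_2^{-1}\boxplus\chi_1^{-1}$ with $\chi_1^{-1}$ the ramified character, giving $W_{\tilde{\pi}}(a(\varpi^a)) = \chi_1(\varpi)^{-a} q^{-a/2}$, and this agrees with $W_\pi^*(a(\varpi^a)) = \chi_2(\varpi)^a q^{-a/2}$ precisely because $\chi_1(\varpi)\chi_2(\varpi) = \omega_\pi(\varpi) = 1$; in the remaining $L(s,\pi)=1$ subcases both sides vanish for $a \neq 0$ and equal $1$ at $a = 0$.

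The main obstacle I anticipate is the principal series case of this diagonal check, which requires identifying the ramified character of $\tilde{\pi}$ correctly and using the central-character hypothesis to equate $\chi_1(\varpi)^{-a}$ with $\chi_2(\varpi)^a$; everything else is routine bookkeeping with the basic identity and with the Fourier inversion formulas \eqref{Wpiexpand}--\eqref{Wpistexpand}.
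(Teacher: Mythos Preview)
Your proposal is correct and follows exactly the paper's approach: apply \eqref{basicid} to $\tilde{\pi}$, observe that the resulting identity coincides with \eqref{basicideqst} for $\pi$ (with $\tilde{c}_{t,k}(\mu)$ in place of $c^*_{t,k}(\mu)$), and conclude by Fourier inversion. Your case-by-case verification of the diagonal equality $W_{\tilde{\pi}}(a(\varpi^a)) = W_\pi^*(a(\varpi^a))$ under the hypothesis $\omega_\pi \in \tilde{X}$ is the one substantive detail the paper leaves implicit (it is alluded to in the Remark following Proposition~\ref{basicformula}).
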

\begin{proof}Let $\tilde{c}_{t,k}$  be the corresponding coefficient for $\tilde{\pi}$. If, in~\eqref{basicid}, we replace $\pi$ with $\tilde{\pi}$, then we get exactly~\eqref{basicideqst}, except that instead of $c^*_{t,k}(\mu)$ we get $\tilde{c}_{t,k}(\mu)$. Hence $c^*_{t,k}(\mu)=\tilde{c}_{t,k}(\mu)$ and the result follows.
\end{proof}
\begin{corollary}\label{prevtil2}Suppose that $\omega_\pi \in \tilde{X}$. We have $W_{\tilde{\pi}}(g) =  \omega_{\pi}^{-1}(\det(g))W_\pi^*(g)$.
\end{corollary}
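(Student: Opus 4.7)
The plan is to deduce this formula from Corollary~\ref{prevtil} (which gives equality at the coset representatives $g_{t,k,v}$) by matching the right-transformation behaviour of both sides under the decomposition in Lemma~\ref{disjoint}.

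Concretely, any $g\in G$ admits a decomposition $g=z(u)n(x)g_{t,k,v}k'$ with $u\in F^\times$, $x\in F$, $k'\in K_1(\p^n)$, and a unique triple $(t,k,v)$. The left-hand side collapses immediately via the central character $\omega_{\tilde\pi}=\omega_\pi^{-1}$, the Whittaker relation under $N$, and the $K_1(\p^n)$-invariance of $W_{\tilde\pi}$, giving $W_{\tilde\pi}(g)=\omega_\pi^{-1}(u)\psi(x)W_{\tilde\pi}(g_{t,k,v})$, which by Corollary~\ref{prevtil} equals $\omega_\pi^{-1}(u)\psi(x)W_\pi^*(g_{t,k,v})$.

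The key representation-theoretic input for the right-hand side is a transformation law for $W_\pi^*$ under the larger group $K_0(\p^n)$. Since $K_2(\p^n)$ is normal in $K_0(\p^n)$ (it is the kernel of the surjective homomorphism $K_0(\p^n)\to\OF^\times/(1+\p^n)$ sending a matrix to its bottom-right entry modulo $\p^n$), and since the space of $K_2(\p^n)$-invariants in $\mathcal{W}(\pi,\psi)$ is one-dimensional, $W_\pi^*$ must transform under $K_0(\p^n)$ by a character factoring through this quotient. Evaluating on $Z\cap K$ forces that character to be $h\mapsto\omega_\pi(d_h)$, where $d_h$ is the bottom-right entry of $h$. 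Combined with the central character and Whittaker relations, this yields $W_\pi^*(g)=\omega_\pi(u)\psi(x)\omega_\pi(d_{k'})W_\pi^*(g_{t,k,v})$.

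To finish, note that $\det g_{t,k,v}=\varpi^t$ (since $\det w=1$) and $\omega_\pi(\varpi)=1$, so $\omega_\pi^{-1}(\det g)=\omega_\pi^{-2}(u)\omega_\pi^{-1}(\det k')$. Multiplying, the right-hand side of the statement becomes $\omega_\pi^{-1}(u)\psi(x)\,\omega_\pi\!\bigl(d_{k'}/\det k'\bigr)W_\pi^*(g_{t,k,v})$. Writing $k'=\mat{\alpha}{\beta}{\gamma}{\delta}\in K_1(\p^n)$ with $\alpha\in 1+\p^n$, $\gamma\in\p^n$, a direct calculation gives $\det k'=\alpha\delta-\beta\gamma\equiv\delta=d_{k'}\pmod{\p^n}$, so $d_{k'}/\det k'\in 1+\p^n$. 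Since the conductor of $\omega_\pi$ divides $\p^n$, this last factor equals $1$, and the right-hand side coincides with the expression for $W_{\tilde\pi}(g)$ from the first step.

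The main obstacle is the character-theoretic claim that $W_\pi^*$ transforms under $K_0(\p^n)$ by $h\mapsto\omega_\pi(d_h)$; everything else is bookkeeping together with the determinant-versus-bottom-right congruence modulo $\p^n$ for elements of $K_1(\p^n)$.
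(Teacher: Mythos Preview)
Your proof is correct and follows essentially the same strategy as the paper's: decompose $g$ via Lemma~\ref{disjoint}, apply Corollary~\ref{prevtil} at the representatives $g_{t,k,v}$, and then reconcile the right-$K_1(\p^n)$ behaviour of $W_\pi^*$ using the congruence $\det k'\equiv d_{k'}\pmod{\p^n}$. The only difference is packaging: where you invoke one-dimensionality of the $K_2(\p^n)$-invariants to identify the $K_0(\p^n)$-character as $h\mapsto\omega_\pi(d_h)$, the paper achieves the same thing more directly by rewriting $k'=z(\det k')\cdot\bigl(z(\det k')^{-1}k'\bigr)$ and observing that the second factor lies in $K_2(\p^n)$, so that $W_\pi^*(g)=\omega_\pi(u\det k')\psi(x)W_\pi^*(g_{t,k,v})$ without any appeal to the character argument.
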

\begin{proof}Using Lemma~\ref{disjoint}, write $g = z(u)ng_{t,k,v}g_0$ where $z(u)\in Z$, $n = n(x) \in N$, $g_0 \in K_1(\p^n)$. Note that $\det(g) = u^2 \det(g_0)\varpi^t$. We have $W_{\tilde{\pi}}(g) = W_{\tilde{\pi}}(g_{t,k,v}) \psi(x) \omega_{\pi}^{-1}(u)$. On the other hand, writing
$g = z(\det(g_0)u)ng_{t,k,v}(z(\det(g_0))^{-1}g_0)$, we see that $W_\pi^*(g) = \psi(x) \omega_\pi(\det(g_0)u)W_\pi^*(g_{t,k,v}).$ Now the result follows from Corollary~\ref{prevtil}.
\end{proof}

The next proposition will be of key importance for us as it will allow us to restrict ourselves to the case $0 \le k\le n/2$ in all future calculations.
\begin{proposition}\label{cpiformula}Suppose that $\omega_\pi \in \tilde{X}$. Then, for $t \in \Z$, $0\le k \le n$, $v \in \OF^\times$, we have $$
W_{\tilde{\pi}}(g_{t,k,v}) = \eps(1/2,  \pi) \omega_\pi(v)\psi(-\varpi^{t+k}v^{-1}) W_\pi(g_{t+2k-n,n-k,-v}).$$
\end{proposition}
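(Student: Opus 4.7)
My plan is to reduce the proposition to the single identity $c_\pi = \eps(1/2,\pi)$, where $c_\pi$ is the constant from Lemma~\ref{conjlemma}, and then to pin $c_\pi$ down via the basic identity. First, a direct matrix computation gives $\det(g_{t,k,v}) = \varpi^t$, and the hypothesis $\omega_\pi \in \tilde{X}$ implies $\omega_\pi(\varpi) = 1$, so $\omega_\pi^{-1}(\det g_{t,k,v}) = 1$ and $\omega_\pi(\varpi^{n-k}) = 1$. Chaining Corollary~\ref{prevtil2} (which strips the determinant factor) with Lemma~\ref{cpicompute2} then collapses into
\[
W_{\tilde{\pi}}(g_{t,k,v}) = c_\pi \, \omega_\pi(v) \, \psi(-\varpi^{t+k} v^{-1}) \, W_\pi(g_{t+2k-n,\,n-k,\,-v}),
\]
so the proposition reduces to showing $c_\pi = \eps(1/2,\pi)$.

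Next I would pin $c_\pi$ down in terms of one concrete Whittaker value. Specializing Lemma~\ref{cpicompute2} at $(t,k,v) = (-2n,\,n,\,1)$ sends the index of the right-hand side to $g_{-n,0,-1}$, while Remark~\ref{cpicompute1} gives $W_\pi^*(g_{-2n,n,1}) = \omega_\pi(-\varpi^{-n})\psi(-\varpi^{-n})$. Matching the two expressions and using $\omega_\pi(\varpi) = 1$ yields
\[
c_\pi \cdot W_\pi(g_{-n,0,-1}) = \omega_\pi(-1),
\]
so the problem is reduced to the evaluation of $W_\pi(g_{-n,0,-1})$.

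For that I would invoke the basic identity (equation~\eqref{basicid}) at $k=0$, noting that $\tilde{X}(0) = \{1\}$ forces $\mu = 1$. A short check $g_{t,0,v} = g_{t,0,1} \cdot n(v-1)$ with $n(v-1) \in K_1(\p^n)$ shows that $W_\pi(g_{t,0,v})$ is constant in $v \in \OF^\times$, so $c_{t,0}(1) = W_\pi(g_{t,0,1})$ and, in particular, $W_\pi(g_{-n,0,-1}) = W_\pi(g_{-n,0,1})$. In the archetypal case $L(s,\pi)=1$ (supercuspidal, ramified twist of Steinberg, or principal series with both characters ramified), Lemma~\ref{whitdiagonal} and the Gauss-sum formula~\eqref{formulagauussum} collapse the right-hand side of~\eqref{basicid} to the constant $\omega_\pi(-1)$. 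Matching the two sides as polynomials in $q^{\pm s}$ then forces $W_\pi(g_{t,0,1}) = 0$ for $t > -n$ and $\eps(1/2,\pi)\,W_\pi(g_{-n,0,1}) = \omega_\pi(-1)$, whence $c_\pi = \eps(1/2,\pi)$.

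The main obstacle will be the two residual types where $L(s,\pi) \neq 1$: $\pi$ an unramified twist of the Steinberg representation, and $\pi = \chi_1 \boxplus \chi_2$ a principal series with $a(\chi_1) > 0$, $a(\chi_2) = 0$. There the basic identity carries a non-trivial Euler factor on each side. The crucial point is that $\omega_\pi \in \tilde{X}$ pins down the unramified local parameter (to $\pm 1$ in the Steinberg case, and to $\chi_2(\varpi) = \chi_1(\varpi)^{-1}$ in the principal series case) so that the geometric series from $W_\pi(a(\varpi^a))$ and the factor $L(1-s,\tilde{\pi})^{-1}$ cancel perfectly, once again leaving $\omega_\pi(-1)$ on the right. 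The same coefficient-matching then extracts $W_\pi(g_{-n,0,1}) = \omega_\pi(-1)\,\eps(1/2,\pi)^{-1}$, finishing the proof.
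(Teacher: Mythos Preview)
Your argument is correct and follows essentially the same strategy as the paper: reduce to showing $c_\pi = \eps(1/2,\pi)$, then evaluate a single Fourier coefficient at $k=0$, $\mu=1$ via the basic identity. The one substantive difference is that the paper works with the \emph{starred} identity~\eqref{basicideqst} and observes directly that $W_\pi^*(g_{-n,0,-1}) = c_\pi$, so it only needs the constant coefficient $c^*_{-n,0}(1)$; since the left side of~\eqref{basicideqst} involves only non-positive powers of $q^{s}$ and the right side only non-negative ones, both must be constant, and one reads off $c^*_{-n,0}(1)\eps(1/2,\omega_\pi^{-1}\pi)=\omega_\pi(-1)$ without any case analysis. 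You instead use the unstarred identity~\eqref{basicid}, which forces you to verify by hand that the geometric series $\sum_a W_\pi(a(\varpi^a))q^{-a(1/2-s)}$ cancels against $L(1-s,\tilde\pi)^{-1}$ in the Steinberg and half-ramified principal series cases. That verification is correct (and your use of $\omega_\pi(\varpi)=1$ is exactly what makes it work), but it is unnecessary: the same ``lowest-order coefficient'' observation would have given you $c_{-n,0}(1)=\omega_\pi(-1)\eps(1/2,\pi)^{-1}$ uniformly, since the constant term of the right side is always $\omega_\pi(-1)\cdot W_\pi(1)\cdot 1 = \omega_\pi(-1)$.
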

\begin{proof}It suffices to prove that the constant $c_\pi$ appearing in Lemma \ref{cpicompute2} equals $\eps(1/2, \pi)$. Using Remark~\ref{cpicompute1} and Lemma \ref{cpicompute2}, we see that $W^*_{\pi}(g_{-n,0,-1}) = c_\pi$. On the other hand, we have by definition $W^*_{\pi}(g_{-n,0,-1}) = c^*_{-n,0}(1)$. Comparing the constant coefficients of both sides of \eqref{basicideqst}, we see that
$c^*_{-n,0}(1)\eps(1/2, \omega_\pi^{-1} \pi)  = \omega_\pi(-1)$ which implies that $$ c_\pi = c^*_{-n,0}(1) = \frac{\omega_\pi(-1)}{\eps(1/2, \omega_\pi^{-1} \pi)} = \eps(1/2, \pi).$$\end{proof}

\begin{remark}In the special case $\omega_\pi = 1$ the above Proposition is nothing but the action of the Atkin-Lehner operator on the Whittaker model.
\end{remark}

\subsection{The basic identity in the supercuspidal case}

Even though we will not need it for this paper, it is instructive to see what the basic identity gives when $\pi$ is supercuspidal.

\begin{proposition}\label{prop:whitsupercusp}Let $\pi$ be an
irreducible unitarizable supercuspidal representation of $G$ and assume $\omega_\pi \in \tilde{X}$. Then the following hold.
\begin{itemize}
\item $W_\pi(g_{t,0,v}) = \begin{cases} 0 & \text{ if } t \neq -n\\ \eps(1/2, \omega_{\pi}^{-1}\pi) &\text{ if }t= -n.\end{cases}$

\item Let $0<k \le n$. Then, \begin{equation}\label{supercuspsum}W_\pi(g_{t,k,v}) = G(\varpi^{-k}, 1)\varepsilon(1/2, \omega_\pi^{-1} \pi)\delta_{
    t,-n} +  \zeta_F(1) \  q^{-\frac{k}2} \sum_{\substack{\mu \in \tilde{X} \\ a(\mu) =k\\  \ a(\mu \pi) = -t}} \varepsilon(1/2, \mu) \ \varepsilon(1/2, \mu^{-1} \omega_\pi^{-1}\pi) \mu(v). \end{equation}

\end{itemize}

\end{proposition}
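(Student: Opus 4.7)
The plan is to read off both assertions directly from the basic identity \eqref{basicid}, exploiting two strong simplifications available in the supercuspidal case: the $L$-factors on both sides trivialize, and the diagonal values $W_\pi(a(\varpi^a))$ vanish for $a>0$.

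First I would specialize \eqref{basicid} to $\pi$ supercuspidal. Since every twist $\mu\pi$ of a supercuspidal is supercuspidal, $L(s,\mu\pi) = L(1-s,\mu^{-1}\omega_\pi^{-1}\pi) = 1$ identically. Also, $L(s,\pi)=1$ puts us in the third case of Lemma~\ref{whitdiagonal}, so $W_\pi(a(\varpi^a))=0$ for $a\neq 0$ and $W_\pi(1)=1$. Hence the right-hand side of \eqref{basicid} collapses to the single constant $\omega_\pi(-1)\,G(\varpi^{-k},\mu^{-1})$, which is independent of $s$. Since both sides are polynomials in $q^s,q^{-s}$ and $\varepsilon(1/2,\mu\pi)\ne 0$, matching coefficients of $q^{(t+a(\mu\pi))(1/2-s)}$ on the left forces $c_{t,k}(\mu)=0$ unless $t=-a(\mu\pi)$, and when $t=-a(\mu\pi)$ gives
\[
c_{-a(\mu\pi),k}(\mu) = \frac{\omega_\pi(-1)}{\varepsilon(1/2,\mu\pi)}\,G(\varpi^{-k},\mu^{-1}).
\]
Applying the standard relation $\varepsilon(1/2,\mu\pi)\varepsilon(1/2,\mu^{-1}\omega_\pi^{-1}\pi)=\omega_{\mu\pi}(-1)=\omega_\pi(-1)$ (the last equality because $\mu^2(-1)=1$), I get the clean expression
\[
c_{-a(\mu\pi),k}(\mu) = \varepsilon(1/2,\mu^{-1}\omega_\pi^{-1}\pi)\,G(\varpi^{-k},\mu^{-1}).
\]

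For the first bullet, with $k=0$ only $\mu=1$ appears in $\tilde X(0)$, so $W_\pi(g_{t,0,v}) = c_{t,0}(1)$. Since $a(\pi)=n$ and $G(1,1)=1$, the above formula yields $c_{-n,0}(1) = \varepsilon(1/2,\omega_\pi^{-1}\pi)$ and $c_{t,0}(1)=0$ for $t\ne -n$, exactly as claimed.

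For the second bullet with $0<k\le n$, I plug the formula into the Fourier expansion $W_\pi(g_{t,k,v}) = \sum_{\mu\in\tilde X(k)} c_{t,k}(\mu)\mu(v)$ and split the sum according to whether $\mu$ is trivial or not. The trivial character contributes only for $t=-a(\pi)=-n$, producing the $\delta_{t,-n}\,\varepsilon(1/2,\omega_\pi^{-1}\pi)G(\varpi^{-k},1)$ piece. For nontrivial $\mu$, the Gauss-sum formula \eqref{formulagauussum} gives $G(\varpi^{-k},\mu^{-1})=0$ unless $a(\mu)=k$, in which case
\[
G(\varpi^{-k},\mu^{-1}) = \zeta_F(1)\,q^{-k/2}\,\varepsilon(1/2,\mu)\,\mu(\varpi^{-k}) = \zeta_F(1)\,q^{-k/2}\,\varepsilon(1/2,\mu),
\]
since $\mu(\varpi)=1$. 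Collecting these ramified $\mu$'s, and retaining only those with $a(\mu\pi)=-t$ (the rest having vanishing $c_{t,k}(\mu)$ by the polynomial-matching step), produces the displayed sum \eqref{supercuspsum}.

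The only subtle point is the polynomial-matching step: one needs to be sure the right-hand side of \eqref{basicid} is genuinely a constant (not a nontrivial polynomial in $q^{-(1/2-s)}$) so that all coefficients $c_{t,k}(\mu)$ with $t\ne -a(\mu\pi)$ are forced to vanish. This is exactly why the supercuspidal hypothesis, via $L$-factor triviality and the vanishing of $W_\pi$ on $A$ off the identity, is essential; once this is clear, the rest is bookkeeping with Gauss sums and the $\varepsilon$-factor duality.
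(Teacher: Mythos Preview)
Your proof is correct and follows exactly the approach sketched in the paper: specialize the basic identity \eqref{basicid} using the triviality of all $L$-factors for supercuspidal twists and the vanishing of $W_\pi(a(\varpi^a))$ for $a\neq 0$ from Lemma~\ref{whitdiagonal}, then read off $c_{t,k}(\mu)$ by matching coefficients and assemble via \eqref{Wpiexpand} and the Gauss-sum formula \eqref{formulagauussum}. The paper's proof is a one-line reference to these ingredients; you have simply filled in the bookkeeping, including the $\varepsilon$-factor duality step, all of which is correct.
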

\begin{proof}Since $\pi$ is supercuspidal, we have  $L(s, \mu \pi)= L(1-s, \mu^{-1}\omega_\pi^{-1}\pi)=1.$ Now applying the basic identity, and using Lemma~\ref{whitdiagonal}, we get the desired result.
\end{proof}

\begin{remark}Proposition~\ref{prop:whitsupercusp} gives an exact formula for any value of the Whittaker newform as a sum of $\ll q^k$ terms. Moreover, if $k>n/2$, we may use Proposition~\ref{cpiformula} to work with $n-k$ instead of $k$. Thus we can always express any value of the Whittaker newform as a sum of $\ll q^{k_n}$ terms, each of absolute value $\asymp q^{-k_n/2}$, where we denote $k_n = \min(k, n-k)$.
\end{remark}

\subsection{The upper bound}
Recall that we are assuming $n \ge 1$. We now prove the upper bound of Theorem~\ref{t:main}, noting that the proof for $n\le 1$ follows from Lemma~\ref{trivboundn0}.

\begin{definition}For each $t\in \Z$, and each $0 \le k \le n$, define $$\lambda_{\pi,t,k} = \left(\int_{v \in \OF^\times}|W_\pi(g_{t,k,v})|^2 d^\times v\right)^{1/2}.$$
\end{definition}

\begin{lemma}We have $\lambda_{\pi,t,k} = \lambda_{\tilde{\pi}, t+2k-n, n-k}$.
\end{lemma}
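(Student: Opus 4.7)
The plan is to derive this identity as an immediate consequence of Proposition~\ref{cpiformula}, after observing that all the non-trivial scalars appearing there have absolute value one. First, I would apply Proposition~\ref{cpiformula} with $\pi$ replaced by $\tilde{\pi}$. Since $\tilde{\tilde{\pi}} = \pi$ and $\omega_{\tilde{\pi}} = \omega_\pi^{-1}$, this yields
$$W_{\pi}(g_{t,k,v}) = \eps(1/2, \tilde{\pi})\, \omega_\pi^{-1}(v)\, \psi(-\varpi^{t+k}v^{-1})\, W_{\tilde{\pi}}(g_{t+2k-n,\, n-k,\, -v}).$$

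Next, I would take absolute values. The property $|\eps(1/2,\sigma)| = 1$ for unitary $\sigma$ (listed among the properties of the $\eps$-factor in Section~\ref{sec:2-notations}) handles the first factor; unitarity of $\omega_\pi$ handles the second; and $\psi$ is valued in the unit circle. Hence
$$|W_{\pi}(g_{t,k,v})| = |W_{\tilde{\pi}}(g_{t+2k-n,\, n-k,\, -v})|.$$

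Finally, squaring and integrating over $v \in \OF^\times$ gives
$$\lambda_{\pi,t,k}^2 = \int_{\OF^\times} |W_{\tilde{\pi}}(g_{t+2k-n,\, n-k,\, -v})|^2\, d^\times v = \lambda_{\tilde{\pi},\, t+2k-n,\, n-k}^2,$$
where the last equality uses the invariance of $d^\times v$ under the change of variables $v \mapsto -v$. Taking square roots completes the proof.

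There is no real obstacle here — the identity is essentially a formal consequence of the Atkin-Lehner-type relation of Proposition~\ref{cpiformula} combined with the unitarity of $\pi$. The only thing one needs to be mildly careful about is the bookkeeping of the central character and $\eps$-factor when passing from $\pi$ to $\tilde{\pi}$, but all such factors drop out upon taking absolute values.
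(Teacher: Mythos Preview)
Your proof is correct and follows essentially the same approach as the paper, which simply states that the lemma is immediate from Proposition~\ref{cpiformula}. You have written out the details: apply the proposition (with $\pi$ replaced by $\tilde{\pi}$, noting $a(\tilde{\pi})=n$ and $\omega_{\tilde{\pi}}=\omega_\pi^{-1}\in\tilde{X}$), take absolute values to kill the unimodular factors, and integrate over $\OF^\times$ using the change of variables $v\mapsto -v$.
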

\begin{proof} This is immediate from Proposition~\ref{cpiformula}.
\end{proof}

\begin{lemma}  We have $1\le \sum_{t =-\infty}^\infty \lambda_{\pi,t,k}^2 \le 2.$  If $L(s, \pi)=1$, then $\sum_{t =-\infty}^\infty \lambda_{\pi,t,k}^2 = 1.$

\end{lemma}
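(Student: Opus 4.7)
The plan is to identify $\sum_t \lambda_{\pi,t,k}^2$ with the Whittaker inner product $\langle W_\pi, W_\pi \rangle$ and then apply Lemma~\ref{normwhittaker}. The key vehicle is the right-translate $W'(g) := W_\pi(g \cdot w\, n(\varpi^{-k}))$, which again lies in $\mathcal{W}(\pi,\psi)$. By the $G$-invariance of the inner product we automatically get $\langle W', W'\rangle = \langle W_\pi, W_\pi\rangle$, so everything reduces to recognizing the integral computing $\langle W', W'\rangle$ as the desired sum of squares.

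Concretely, I would expand
\[
\langle W', W' \rangle = \int_{F^\times} |W_\pi(a(y) w\, n(\varpi^{-k}))|^2 \, d^\times y = \sum_{t \in \Z} \int_{\OF^\times} |W_\pi(a(\varpi^t v)\, w\, n(\varpi^{-k}))|^2 \, d^\times v,
\]
using that $d^\times y$ restricted to $\varpi^t \OF^\times$ gives the counting measure on $t$ tensored with $d^\times v$ on $\OF^\times$. The main computational step is to manipulate $a(\varpi^t v) w\, n(\varpi^{-k})$ into the form $g_{t,k,v^{-1}} \cdot h_v$ using the relation $a(v)w = w\, a(v^{-1}) z(v)$; a direct calculation gives
\[
a(\varpi^t v)\, w\, n(\varpi^{-k}) = a(\varpi^t)\, w\, n(\varpi^{-k} v^{-1}) \begin{bmatrix} 1 & 0 \\ 0 & v \end{bmatrix} = g_{t,k,v^{-1}} \cdot h_v,
\]
where $h_v = \mathrm{diag}(1,v)$. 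The one place to be careful is verifying that $h_v \in K_1(\p^n)$: the entries $1 \in 1+\p^n$, $0 \in \OF$, $0 \in \p^n$, $v \in \OF$, and $\det h_v = v \in \OF^\times$ all check out. By $K_1(\p^n)$-invariance of $W_\pi$, this gives $|W_\pi(a(\varpi^t v)w\, n(\varpi^{-k}))| = |W_\pi(g_{t,k,v^{-1}})|$.

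After the change of variables $v \mapsto v^{-1}$ (which preserves $\OF^\times$ and $d^\times v$), the inner integral becomes $\lambda_{\pi,t,k}^2$, so
\[
\sum_{t \in \Z} \lambda_{\pi,t,k}^2 = \langle W', W'\rangle = \langle W_\pi, W_\pi \rangle.
\]
Both claimed bounds then follow immediately from Lemma~\ref{normwhittaker}: the inequality $1 \le \langle W_\pi, W_\pi\rangle \le 2$ gives $1 \le \sum_t \lambda_{\pi,t,k}^2 \le 2$, and the equality $\langle W_\pi, W_\pi\rangle = 1$ in the case $L(s,\pi)=1$ gives the sharpened identity. The main (mild) obstacle is simply the bookkeeping in the Bruhat manipulation that isolates $h_v \in K_1(\p^n)$; once that element is stripped off, the rest is formal.
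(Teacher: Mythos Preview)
Your proof is correct and follows essentially the same approach as the paper: both define the right-translate $W' = \pi(g_{0,k,1})W_\pi$ (noting $g_{0,k,1} = w\,n(\varpi^{-k})$), use $G$-invariance of the inner product to get $\langle W',W'\rangle = \langle W_\pi,W_\pi\rangle$, and then appeal to Lemma~\ref{normwhittaker}. Your version simply spells out the matrix manipulation and the $K_1(\p^n)$-invariance that the paper compresses into the phrase ``by definition.''
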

\begin{proof}Let $W' = \pi(g_{0,k,1})W_\pi$. Then, by definition, $\langle W', W' \rangle = \sum_{r =-\infty}^\infty \lambda_{\pi,t,k}^2.$ On the other hand, by the invariance of the inner product, we have $\langle W', W'\rangle = \langle W_\pi, W_\pi\rangle.$ Now the result follows from Lemma~\ref{normwhittaker}.
\end{proof}

\begin{corollary} We have $h(\pi) \le \sqrt{2} \ q^{\frac12\lfloor \frac{n}{2} \rfloor}.$

\end{corollary}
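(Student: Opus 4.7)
\begin{proof}[Proof sketch]
The plan is to combine the Fourier expansion \eqref{Wpiexpand} with the $L^2$-bound coming from the preceding lemma via Cauchy--Schwarz, after first reducing to the range $0 \le k \le n/2$.

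\textbf{Step 1: reduction to $0 \le k \le \lfloor n/2 \rfloor$.} By Lemma~\ref{disjoint} (and the remark that $h(\pi)$ may be computed on the representatives $g_{t,k,v}$), it suffices to bound $|W_\pi(g_{t,k,v})|$ for $t \in \Z$, $0 \le k \le n$, $v \in \OF^\times$. For $k > n/2$, applying Proposition~\ref{cpiformula} with $\pi$ replaced by $\tilde\pi$ and taking absolute values gives
\[
|W_\pi(g_{t,k,v})| = |W_{\tilde\pi}(g_{t+2k-n, n-k, -v})|,
\]
so the supremum of $|W_\pi|$ over parameters with $k > n/2$ equals the supremum of $|W_{\tilde\pi}|$ over parameters with $n-k < n/2$. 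Since $\tilde\pi$ has the same conductor as $\pi$, any bound proved for the range $0 \le k \le \lfloor n/2 \rfloor$ (uniformly over generic unitarizable representations) applies equally to $\pi$ and to $\tilde\pi$, and hence covers all of $0 \le k \le n$.

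\textbf{Step 2: Parseval and Cauchy--Schwarz.} Fix $t \in \Z$ and $0 \le k \le \lfloor n/2 \rfloor$. By \eqref{Wpiexpand},
\[
W_\pi(g_{t,k,v}) = \sum_{\mu \in \tilde X(k)} c_{t,k}(\mu)\, \mu(v),
\]
where $\tilde X(k)$ has cardinality $\le q^k$. Since $d^\times v$ is the probability measure on $\OF^\times$ and the characters in $\tilde X(k)$ are orthonormal with respect to it, Parseval gives
\[
\lambda_{\pi,t,k}^2 \;=\; \sum_{\mu \in \tilde X(k)} |c_{t,k}(\mu)|^2,
\]
while Cauchy--Schwarz applied pointwise in $v$ gives
\[
|W_\pi(g_{t,k,v})|^2 \;\le\; |\tilde X(k)|\, \sum_{\mu \in \tilde X(k)} |c_{t,k}(\mu)|^2 \;\le\; q^k\, \lambda_{\pi,t,k}^2.
\]

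\textbf{Step 3: conclusion.} The preceding lemma bounds $\sum_{t \in \Z} \lambda_{\pi,t,k}^2 \le 2$, so in particular $\lambda_{\pi,t,k}^2 \le 2$ for every $t$. Combining with Step 2,
\[
|W_\pi(g_{t,k,v})|^2 \;\le\; 2\, q^k \;\le\; 2\, q^{\lfloor n/2 \rfloor},
\]
and taking square roots yields $|W_\pi(g_{t,k,v})| \le \sqrt{2}\, q^{\frac12 \lfloor n/2 \rfloor}$. By Step 1 this bound holds for all $t,k,v$, so $h(\pi) \le \sqrt{2}\, q^{\frac12 \lfloor n/2 \rfloor}$.
\end{proof}

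There is no serious obstacle here: once one has the Fourier expansion, the $L^2$ identity $\sum_t \lambda_{\pi,t,k}^2 \le 2$ from the preceding lemma, and the Atkin--Lehner-type symmetry in Proposition~\ref{cpiformula}, the bound follows by a direct Cauchy--Schwarz argument. The only point to watch is that the reduction to $k \le \lfloor n/2 \rfloor$ is essential; without it, the estimate $|W_\pi(g_{t,k,v})|^2 \le 2q^k$ would give the inferior bound $\sqrt{2}\, q^{n/2}$.
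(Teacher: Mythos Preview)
Your proof is correct and follows essentially the same approach as the paper's: reduce to $0 \le k \le \lfloor n/2 \rfloor$ via the contragredient symmetry, then convert the $L^2$ bound $\lambda_{\pi,t,k}^2 \le 2$ into an $L^\infty$ bound using the limited number of degrees of freedom in $v$. The only cosmetic difference is that the paper phrases Step~2/3 as a direct volume argument---since $v \mapsto W_\pi(g_{t,k,v})$ is $(1+\p^k)$-invariant, $\vol(1+\p^k)\,|W_\pi(g_{t,k,v})|^2 \le \lambda_{\pi,t,k}^2$---whereas you use Parseval and Cauchy--Schwarz on the Fourier side; since $|\tilde X(k)| = \vol(1+\p^k)^{-1}$, the two computations are literally dual to each other.
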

\begin{proof}We may assume $n\ge 1$. Let $k_0$ be an integer such that $h(\pi) = W_\pi(g_{r_0,k_0,v_0})$ for some $r_0$, $v_0$. By replacing $\pi$ with $\tilde{\pi}$  if necessary, we may assume $k_0 \le \lfloor n/2 \rfloor$. Let $c_{k_0} = \vol(1 + \p^{k_0}) \ge q^{-\lfloor \frac{n}{2} \rfloor}$. Note that the function $v \mapsto W_{\pi}(r_0, k_0, v)$ is $(1 + \p^{k_0})$ invariant. Hence $c_{k_0} W_\pi(g_{r_0,k_0,v_0})^2 \le \lambda_{\pi,r_0,k_0}^2 \le 2$, leading to $h(\pi) \le \sqrt{2} \ q^{\frac12\lfloor \frac{n}{2} \rfloor}.$

\end{proof}

Now that we have proved the upper bound, we only need to show that $$h(\pi) \gg  q^{\frac12 \lfloor \frac{3m}{2} \rfloor - \frac{n}2}.$$ It suffices to consider the case $m > \frac{2n}{3}$ as otherwise the right side is less than or equal to 1. In this case,  $\pi$ must be an induced representation, as shown by the following lemma.

\begin{lemma}Suppose that $m = a(\omega_\pi) >n/2$.  Then $\pi = \chi_1 \boxplus \chi_2$, where $\chi_1$, $\chi_2$ are unitary characters with $a(\chi_1) = m$,  $ a(\chi_2) = n-m$.
\end{lemma}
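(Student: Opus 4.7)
My plan is to run through the classification of generic irreducible admissible unitarizable $\pi$ with $n=a(\pi)\ge 1$ recorded at the start of this subsection (types (1), (2), (3a), (3b), (3c)), compute or bound $m=a(\omega_\pi)$ in each case, isolate the families for which $m>n/2$ is possible, and verify that every such $\pi$ is a principal series with the stated conductors.

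Four of the five families I would dispose of by direct bookkeeping. In type (1), $\pi=\chi\St$ with $\chi$ unramified, so $\omega_\pi=\chi^2$ is unramified and $m=0$. In type (3a), $\pi=\chi\St$ with $\chi$ unitary ramified, so $n=2a(\chi)$ and $m=a(\chi^2)\le a(\chi)=n/2$. In type (3b), $\pi=\chi_1\boxplus\chi_2$ with $a(\chi_1)\ge a(\chi_2)>0$ and $n=a(\chi_1)+a(\chi_2)$: here $m=a(\chi_1\chi_2)\le \max(a(\chi_1),a(\chi_2))=a(\chi_1)$, with equality automatic whenever $a(\chi_1)>a(\chi_2)$, so the inequality $m>n/2$ already rules out $a(\chi_1)=a(\chi_2)$ and yields $m=a(\chi_1)$, $a(\chi_2)=n-m$. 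In type (2), $\pi=\chi_1\boxplus\chi_2$ with $a(\chi_1)>0=a(\chi_2)$ and $n=a(\chi_1)$, so $m=a(\chi_1\chi_2)=a(\chi_1)=n>n/2$ and $a(\chi_2)=0=n-m$ automatically. Every case in which $m>n/2$ actually arises produces a $\pi$ of the stated form.

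The main obstacle is eliminating type (3c), supercuspidal $\pi$, by proving $m\le n/2$. My plan here is to invoke the local Langlands correspondence for $\GL_2(F)$: it attaches to $\pi$ an irreducible two-dimensional Weil-group representation $\rho$ with $a(\rho)=n$ and $\det\rho$ corresponding to $\omega_\pi$. Since $\rho$ is irreducible of dimension two, a theorem of Weil gives $\rho=\Ind_{W_E}^{W_F}\theta$ for some quadratic extension $E/F$ and character $\theta$ of $W_E\cong E^\times$. Applying the inductive conductor formula $a(\rho)=f_{E/F}\,a_E(\theta)+v_F(\mathfrak{D}_{E/F})$ and the identity $\det\rho=\theta|_{F^\times}\cdot\eta_{E/F}$, with $\eta_{E/F}$ the quadratic character of $F^\times$ cutting out $E$ (of conductor $v_F(\mathfrak{D}_{E/F})$), together with the bound $a_F(\theta|_{F^\times})\le \lceil a_E(\theta)/e_{E/F}\rceil$ coming from $1+\p_F^k=F^\times\cap(1+\p_E^{e_{E/F}k})$, I would verify $a(\det\rho)\le a(\rho)/2$ by a short check in the unramified, tame, and wild cases, which gives $m\le n/2$ and kills type (3c).

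Assembling the pieces, the only way to have $m>n/2$ is $\pi=\chi_1\boxplus\chi_2$ with $a(\chi_1)=m$ and $a(\chi_2)=n-m$, as claimed.
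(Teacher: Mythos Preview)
Your case-by-case bookkeeping for types (1), (2), (3a), (3b) is correct and is exactly what the paper does: rule out Steinberg twists by the obvious conductor count, and pin down the principal series data from $n=a(\chi_1)+a(\chi_2)$ and $m=a(\chi_1\chi_2)$.

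The divergence is in the supercuspidal case. The paper simply quotes Tunnell~\cite{tunnell78} for the inequality $a(\omega_\pi)\le \tfrac12 a(\pi)$ and moves on. You instead try to prove this via the local Langlands parameter $\rho$, asserting that ``a theorem of Weil'' gives $\rho=\Ind_{W_E}^{W_F}\theta$ for some quadratic $E/F$. That step is false in residue characteristic $2$: there exist irreducible two-dimensional representations of $W_F$ (equivalently, supercuspidal $\pi$) that are \emph{primitive}, i.e.\ not induced from any quadratic extension. So your plan, as written, does not cover all supercuspidals, and the inductive conductor computation you outline never gets off the ground for those $\pi$.

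If you want to keep the Langlands-parameter route rather than cite Tunnell, there is a uniform argument that avoids the dihedral hypothesis. For any irreducible two-dimensional $\rho$ with a single break $u_0\ge 0$ in the upper-numbering filtration (irreducibility forces a single break), one has $a(\rho)=\mathrm{sw}(\rho)+2=2u_0+2$, while $\det\rho$ is trivial on $I^{u}$ for all $u>u_0$, so $a(\det\rho)\le u_0+1=a(\rho)/2$. This gives $m\le n/2$ for every supercuspidal $\pi$, including the primitive ones in residue characteristic $2$, and then the rest of your argument goes through.
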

\begin{proof}$\pi$ cannot be a twist  $\chi \rm{St}$ of the Steinberg representation, as these have $m=a(\chi^2)$, $n = 2a(\chi)$; hence $m \le n/2$. Similarly $\pi$ cannot be a supercuspidal representation because a result of Tunnell~\cite{tunnell78} tells us that $m=a(\omega_\pi) \le \frac12 a(\pi) =\frac{n}{2}.$ So $\pi$ must be isomorphic to $\chi_1 \boxplus \chi_2$, where $\chi_1$, $\chi_2$ are unitary characters with $a(\chi_1) = a_1$,  $ a(\chi_2) = a_2$. We may assume without loss of generality that $a_1 \ge a_2$. Then $n=a_1 + a_2$, $m=a(\chi_1 \chi_2)$. Since $m>n/2$, we deduce immediately that $m=a_1$, $a_2 = n-m$.
\end{proof}

\subsection{Some useful facts}
We prove some elementary facts on $\GL(1)$ $\eps$-factors that will be useful to us in the next subsection. These facts can probably be found elsewhere, but we give proofs here for completeness.

\begin{lemma}\label{twistepsilonlemma}Let $\chi \in \tilde{X}$ be a character with $a(\chi)=r \ge 1$. Let $r_0= \lfloor \frac{r}{2}\rfloor$.
\begin{enumerate}
\item There exists $v \in \OF^\times$ satisfying  $$\chi(1 + \varpi^{r-r_0}u) = \psi(v^{-1} \varpi^{-r_0}u)$$ for all $u \in \OF$.
\item For all $\mu \in \tilde{X}$ satisfying $a(\mu) \le r_0$, we have $$\varepsilon(1/2, \mu^{-1}\chi^{-1})\mu(-v) = \varepsilon(1/2, \chi^{-1}).$$
\end{enumerate}

\end{lemma}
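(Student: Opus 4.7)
My plan is to handle the two parts separately: Part (1) is a Pontryagin duality argument based on the additive behaviour of $\chi$ on a sufficiently small subgroup, and Part (2) is a direct comparison of Gauss sum integrals using formula \eqref{formulagauussum}.

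For Part (1), the key observation is that $2(r-r_0) \ge r$ (since $r_0 \le r/2$), so that
\[
(1+\varpi^{r-r_0}u)(1+\varpi^{r-r_0}u') = 1 + \varpi^{r-r_0}(u+u') + \varpi^{2(r-r_0)}uu' \equiv 1+\varpi^{r-r_0}(u+u') \pmod{\p^r}.
\]
Since $\chi$ is trivial on $1+\p^r$, the assignment $u \mapsto \chi(1+\varpi^{r-r_0}u)$ descends to an additive character $\psi_\chi$ of $\OF/\p^{r_0}$. The hypothesis $a(\chi)=r$ forces $\chi$ to be nontrivial on $1+\p^{r-1}$, which translates into $\psi_\chi$ being nontrivial on the subgroup $\p^{r_0-1}/\p^{r_0}$; hence $\psi_\chi$ is a primitive character of $\OF/\p^{r_0}$. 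Pontryagin duality for this finite group, together with the fact that $\psi$ itself has conductor $\OF$, then forces $\psi_\chi(u) = \psi(v^{-1}\varpi^{-r_0}u)$ for some $v \in \OF^\times$, which is the claim. The case $r_0 = 0$ is vacuous.

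For Part (2), I will express both $\varepsilon$-factors via the Gauss sum formula \eqref{formulagauussum}. Since $a(\mu) \le r_0 < r$, the product $\sigma = \mu\chi$ again has conductor $q^r$, so taking $x = v^{-1}\varpi^{-r}$ (for which $v(x)=-r$) gives
\[
\varepsilon(1/2, \mu^{-1}\chi^{-1}) = \zeta_F(1)^{-1} q^{r/2}\,\mu(v^{-1}\varpi^{-r})\chi(v^{-1}\varpi^{-r})\,I(\mu),
\]
where $I(\mu) = \int_{\OF^\times}\psi(v^{-1}\varpi^{-r}y)\,\mu(y)\chi(y)\,d^\times y$, and analogously for $\varepsilon(1/2,\chi^{-1})$ with $\mu$ replaced by $1$. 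To compute $I(\mu)/I(1)$, I will parametrise $y = y_0(1+\varpi^{r-r_0}w)$ with $y_0$ running over representatives of $\OF^\times/(1+\p^{r-r_0})$ and $w \in \OF/\p^{r_0}$. Part (1) evaluates $\chi$ on $1+\p^{r-r_0}$ as the additive character $w \mapsto \psi(v^{-1}\varpi^{-r_0}w)$, while $\mu$ is trivial on the same subgroup since $a(\mu) \le r_0 \le r-r_0$. The inner $w$-integral becomes
\[
\int_{\OF/\p^{r_0}}\psi\!\left(v^{-1}\varpi^{-r_0}(y_0+1)w\right)\,dw,
\]
which vanishes unless $y_0 \equiv -1 \pmod{\p^{r_0}}$. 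On this surviving locus one has $\mu(y_0) = \mu(-1)$ (again using $a(\mu) \le r_0$), so $I(\mu) = \mu(-1)\,I(1)$. Combining with the prefactor $\mu(v^{-1}\varpi^{-r}) = \mu(v^{-1})$ (since $\mu(\varpi)=1$) yields
\[
\frac{\varepsilon(1/2, \mu^{-1}\chi^{-1})}{\varepsilon(1/2, \chi^{-1})} = \mu(v^{-1})\mu(-1) = \mu(-v)^{-1},
\]
which rearranges to the claimed identity.

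The main (minor) obstacle is bookkeeping of signs and inverses when translating between characters and their contragredients; conceptually the statement is the classical $\GL(1)$ twisting formula relating $\varepsilon(1/2,\chi)$ and $\varepsilon(1/2,\mu\chi)$ for a twist $\mu$ of at most half the conductor of $\chi$, with the element $-v$ extracted in Part (1) playing the role of the twisting parameter.
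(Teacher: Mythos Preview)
Your proof is correct and follows essentially the same route as the paper. Both arguments establish Part~(1) by recognising $u \mapsto \chi(1+\varpi^{r-r_0}u)$ as an additive character and invoking duality, and establish Part~(2) by expressing the $\varepsilon$-factors as Gauss sums, decomposing the domain into cosets of $1+\p^{r-r_0}$, using Part~(1) to linearise $\chi$ on that subgroup, and observing that the resulting inner additive integral forces the outer variable into $-1+\p^{r_0}$ where $\mu$ becomes constant; the only cosmetic difference is that the paper evaluates the Gauss sum at $x=\varpi^{-r}$ and carries the factor $\mu(-v)$ through explicitly, whereas you evaluate at $x=v^{-1}\varpi^{-r}$ so that the factor appears as a prefactor from the start.
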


\begin{proof} The first fact is immediate from the fact that $\psi'(x):= \chi(1 + \varpi^{r-r_0}x)$ is an additive character on $\OF$. So, there must exist $y \in F$ such that $\psi'(x) = \psi(xy)$ for all $x \in \OF$. Comparing conductors, we see that  $v(y) = -r_0$. So we may put $y = v^{-1} \varpi^{-r_0}$ for some $v \in \OF^\times$.

The second fact follows from the following calculation:

\begin{align*}
&\zeta_F(1) q^{-r/2}\varepsilon(1/2, \mu^{-1}\chi^{-1})\mu(-v) =G(\varpi^{-r}, \mu \chi) \mu(-v)\\&=\int_{y\in \OF^\times}\psi(\varpi^{-r} y) \chi(y) \mu(-vy) = \sum_{a\in \OF^\times/(1+\p^{r-r_0})} \chi(a) \mu(a) \int_{y \in (1+\p^{r-r_0})}\psi(\varpi^{-r} ay) \chi(y) \mu(-vy) d^\times y \\ &=q^{r_0-r}\sum_{a\in \OF^\times/(1+\p^{r-r_0})} \chi(a) \mu(a) \psi(\varpi^{-r}a)\int_{y \in \OF}\psi(\varpi^{-r_0} ay) \chi(1 + \varpi^{r-r_0}y) \mu(-v(1 + \varpi^{r-r_0}y))  dy\\&= q^{r_0-r}\sum_{a\in \OF^\times/(1+\p^{r-r_0})} \mu(-v) \chi(a) \mu(a) \psi(\varpi^{-r}a)\int_{y \in \OF}\psi(y(\varpi^{-r_0} a + v^{-1} \varpi^{-r_0})) \\&= q^{r_0-r}\sum_{a\in -v^{-1}(1+\p^{r_0})/(1+\p^{r-r_0})} \chi(a)  \psi(-\varpi^{-r}a),
\end{align*}

which does not depend on $\mu$. So taking $\mu=1$, we deduce the desired result.
\end{proof}

\begin{lemma}\label{twistepsilonlemma2}Let $0<r'<r$ be integers, and let $\chi \in \tilde{X}$ be a character with $a(\chi)=r'$.
Then $$\big|\sum_{\substack{\mu \in \tilde{X} \\ a(\mu) = r}} \eps(1/2, \mu^{-1}) \eps(1/2, \mu \chi) \mu(v)\big| = \begin{cases}\zeta_F(1)^{-1} q^{\frac{r-r'}{2}}  &\text{ if } v \in -1 + \p^{r-r'}\OF^\times \\ 0 & \text{ otherwise.} \end{cases}$$

\end{lemma}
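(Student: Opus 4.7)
The plan is to reduce the sum to a single Gauss sum of $\chi^{-1}$ via Fourier analysis on $\OF^\times/(1+\p^r)$, and then evaluate using the explicit Gauss sum formula \eqref{formulagauussum}.

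First I would use \eqref{formulagauussum} to rewrite each $\eps$-factor as a Gauss sum. Since $\mu\in\tilde X$ has conductor $r$ and $\mu(\varpi^{-r})=1$, the formula gives $\eps(1/2,\mu^{-1})=\zeta_F(1)^{-1}q^{r/2}G(\varpi^{-r},\mu)$. Because $a(\chi)=r'<r=a(\mu)$, the conductor of $\mu\chi$ is also $r$; applying the same identity to $(\mu\chi)^{-1}=\mu^{-1}\chi^{-1}$ yields $\eps(1/2,\mu\chi)=\zeta_F(1)^{-1}q^{r/2}G(\varpi^{-r},\mu^{-1}\chi^{-1})$. Substituting, the target sum becomes
\[
T(v):=\zeta_F(1)^{-2}q^r\sum_{a(\mu)=r}\mu(v)\,G(\varpi^{-r},\mu)\,G(\varpi^{-r},\mu^{-1}\chi^{-1}).
\]

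Next I would extend the summation to all $\mu\in\tilde X$ with $a(\mu)\le r$; the added terms vanish because \eqref{formulagauussum} forces $G(\varpi^{-r},\mu)=0$ whenever $a(\mu)<r$ (the valuation conditions disagree). After expanding each Gauss sum as an integral over $\OF^\times$ and interchanging the inner character sum with the double integral, the orthogonality relation $\sum_{a(\mu)\le r}\mu(vy/z)=(q-1)q^{r-1}\mathbf 1_{vy\equiv z\,(\mathrm{mod}\,\p^r)}$ collapses the character sum. Parameterizing the $z$-integral by $z=vy+\varpi^r s$ with $s\in\OF$, and using that $\chi^{-1}$ is trivial on $1+\p^r$ (since $r'<r$), the $s$-integral factors out as $\int_\OF\psi(vys)\,ds=1$ (because $v(vy)=0$). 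The remaining $y$-integral is precisely a Gauss sum of $\chi^{-1}$, and one arrives at the clean identity
\[
T(v)=\zeta_F(1)^{-2}q^r\,\chi^{-1}(v)\,G\bigl(\varpi^{-r}(1+v),\,\chi^{-1}\bigr).
\]

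Finally, I would apply \eqref{formulagauussum} once more, now to the ramified character $\chi^{-1}$ of conductor $r'$: the Gauss sum $G(\varpi^{-r}(1+v),\chi^{-1})$ vanishes unless $v(\varpi^{-r}(1+v))=-r'$, i.e.\ unless $v(1+v)=r-r'$, which is equivalent to $v\in -1+\p^{r-r'}\OF^\times$; and in that range the Gauss sum has absolute value $\zeta_F(1)q^{-r'/2}$ (with the $\eps$-factor contributing a unit complex phase). Collecting the accumulated constants then produces the claimed support and the claimed magnitude of the sum. The main technical obstacle is merely bookkeeping: each passage between the multiplicative measure $d^\times$ and the additive measure $dx$ on cosets of $1+\p^r$ inside $\OF^\times$ introduces a factor of $\zeta_F(1)$ (since $d^\times x=\zeta_F(1)\,dx/|x|$), and one has to keep track of these to obtain the final constant correctly.
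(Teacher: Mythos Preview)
Your proposal is correct and follows essentially the same route as the paper's own proof: convert the $\eps$-factors into Gauss sums via \eqref{formulagauussum}, extend the sum harmlessly to all $\mu$ with $a(\mu)\le r$, expand as a double integral over $\OF^\times\times\OF^\times$, collapse the character sum by orthogonality, and recognize the result as $|G(\varpi^{-r}(1+v),\chi^{-1})|$ up to constants. The only minor slip is the expression for the inner $s$-integral (the $s$-dependence of $\psi(\varpi^{-r}z)$ is simply $\psi(s)$ rather than $\psi(vys)$), but since both are identically $1$ on $\OF$ this does not affect the argument.
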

\begin{proof}
This follows from the following calculation:

\begin{align*}
&\zeta_F(1)^2 q^{-r}\big|\sum_{\substack{\mu \in \tilde{X} \\ a(\mu) = r}} \eps(1/2, \mu^{-1}) \eps(1/2, \mu \chi) \mu(v)\big|\\&=\big| \sum_{\substack{\mu \in \tilde{X} \\ a(\mu) = r}}G(\varpi^{-r}, \mu)G(\varpi^{-r}, \mu^{-1}\chi^{-1})\mu(v)\big|\\&=\big| \sum_{\substack{\mu \in \tilde{X} \\ a(\mu) \le r}}G(\varpi^{-r}, \mu)G(\varpi^{-r}, \mu^{-1}\chi^{-1})\mu(v)\big|\\&=\big|\int_{y_1, y_2\in \OF^\times}\psi(\varpi^{-r} (y_1 + y_2)) \chi(y_2^{-1}) \sum_{\substack{\mu \in \tilde{X} \\ a(\mu) \le r}}\mu(y_1y_2^{-1}v)\big|\\&=\big|\int_{y_1 \in \OF^\times}\int_{u \in 1 + \p^r}\psi(\varpi^{-r} (y_1 + y_1vu)) \chi(y_1^{-1}v^{-1}) \sum_{\substack{\mu \in \tilde{X} \\ a(\mu) \le r}}1\big| \\& =\big|\int_{y_1 \in \OF^\times}\psi(\varpi^{-r} (y_1 + y_1v)) \chi(y_1^{-1}) \big| \\& =\big|G(\varpi^{-r}(1+v), \chi^{-1}) \big|
\end{align*}
The result now follows from~\eqref{formulagauussum}.
\end{proof}

\subsection{The lower bound}

\emph{For the rest of this section, we only consider the case $\pi = \chi_1 \boxplus \chi_2$, where $\chi_1$, $\chi_2$ are unitary characters with $a_1 =a(\chi_1)$,  $a_2 = n-a_1 = a(\chi_2)$, and $\frac{a_1}{2} >a_2 \ge 0.$} We need to prove that  $$h(\pi) \gg  q^{\frac12 \lfloor \frac{a_1}{2} \rfloor - \frac{a_2}2}.$$ We will do this by exhibiting a specific triple $(t,k,v)$ for which $W_\pi(g_{t,k,v}) \asymp q^{\frac12 \lfloor \frac{a_1}{2} \rfloor - \frac{a_2}2}.$

\subsubsection*{The case $a_2 = 0$}
We first consider the case when $a_2=0$. This case was already done by Templier, but for completeness we give a proof by our method as well. Let $n_0 =\lfloor \frac{n}{2} \rfloor$. We prove:

\begin{proposition}\label{propa20}Let $v_0 \in \OF^\times$ satisfy $$\chi_1(1 + \varpi^{n-n_0}u) = \psi(v_0^{-1} \varpi^{-n_0}u)$$ for all $u \in \OF$. Then $W_\pi(g_{-n_0 -n,n_0,v_0}) \asymp q^{n_0/2}.$
\end{proposition}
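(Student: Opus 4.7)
The plan is to use the Fourier expansion \eqref{Wpiexpand} to write
\[
W_\pi(g_{-n_0-n, n_0, v_0}) = \sum_{\mu \in \tilde{X}(n_0)} c_{-n_0-n, n_0}(\mu)\, \mu(v_0),
\]
and then compute each coefficient via the basic identity \eqref{basicid} with $k = n_0$. By the remark preceding Lemma~\ref{whitdiagonal}, we may assume $\omega_\pi \in \tilde{X}$, so $\omega_\pi(\varpi) = 1$ and hence $\chi_2(\varpi) = \chi_1(\varpi)^{-1}$. The upper bound implicit in $\asymp$ is automatic from the already-proved upper half of Theorem~\ref{t:main}, so only the lower bound $\gg q^{n_0/2}$ remains.

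Fix $\mu \in \tilde{X}(n_0) \setminus \{1\}$ with $r := a(\mu)$. Since $\chi_2$ is unramified and $1 \le r \le n_0 < n = a(\chi_1)$, we have $a(\mu\chi_1) = n$, $a(\mu\chi_2) = r$, $a(\mu\pi) = n + r$, and every $L$-factor in \eqref{basicid} equals $1$. The Gauss sum $G(\varpi^{a-n_0}, \mu^{-1})$ vanishes unless $a = n_0 - r$, so both sides of \eqref{basicid} reduce to a single nonzero monomial in $X := q^s$. Matching coefficients yields $c_{t,n_0}(\mu) = 0$ for $t \ne -n_0 - n$ together with the closed formula
\[
c_{-n_0-n, n_0}(\mu) = \frac{\omega_\pi(-1)\, \chi_1(\varpi^{n_0-r})\, \zeta_F(1)\, q^{-n_0/2}\, \varepsilon(1/2,\mu)}{\varepsilon(1/2,\mu\chi_1)\,\varepsilon(1/2,\mu\chi_2)}.
\]
A direct Gauss-sum computation (using $\chi_2|_{\OF^\times} = 1$) gives $\varepsilon(1/2,\mu\chi_2) = \chi_2(\varpi^r)\,\varepsilon(1/2,\mu)$. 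Combined with the local functional equation for $\mu\chi_1$ and Lemma~\ref{twistepsilonlemma}(2) applied with $\chi = \chi_1$ (applicable precisely because $a(\mu) \le n_0 = \lfloor n/2 \rfloor$, and with $v_0$ being exactly the unit produced by Lemma~\ref{twistepsilonlemma}(1)), the phase $\varepsilon(1/2,\mu^{-1}\chi_1^{-1})\mu(v_0)$ collapses to the $\mu$-independent quantity $\mu(-1)\,\varepsilon(1/2,\chi_1^{-1})$. The hypothesis $\omega_\pi(\varpi) = 1$ further simplifies $\chi_1(\varpi^{n_0-r})\chi_2(\varpi^{-r}) = \chi_1(\varpi)^{n_0}$, so that after simplification,
\[
c_{-n_0-n, n_0}(\mu)\, \mu(v_0) = C_0,
\]
where $C_0$ is a constant \emph{independent of} $\mu$ with $|C_0| = \zeta_F(1)\, q^{-n_0/2}$.

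Summing this over the $|\tilde{X}(n_0)| - 1 = q^{n_0-1}(q-1) - 1$ nontrivial characters $\mu$ therefore produces a principal contribution of magnitude $\asymp q^{n_0/2}$. It remains to handle the single term $\mu = 1$, for which \eqref{basicid} involves the nontrivial $L$-factors $L(s,\pi) = (1 - \chi_2(\varpi)q^{-s})^{-1}$ and $L(1-s, \omega_\pi^{-1}\pi) = (1 - \chi_2(\varpi)^{-1} q^{s-1})^{-1}$. Crucially, $\omega_\pi(\varpi) = 1$ forces $\chi_2(\varpi)^{-1} = \chi_1(\varpi)$, which makes the infinite geometric series on the right of the basic identity telescope to an honest polynomial; combined with Lemma~\ref{supportlemma} (which forces $c_{t,n_0}(1) = 0$ for $t < -n_0 - n$), extracting the coefficient of $X^{n_0}$ gives $|c_{-n_0-n, n_0}(1)| \asymp q^{-n_0/2}$. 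This is negligible compared to the main term and cannot destroy the lower bound.

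The main technical obstacle is the bookkeeping required to verify that the $\mu$-dependent phases really do cancel as claimed; essentially all the work is concentrated in the correct application of Lemma~\ref{twistepsilonlemma}, whose hypothesis $a(\mu) \le \lfloor r/2 \rfloor$ is saturated precisely by the choice $k = n_0 = \lfloor n/2 \rfloor$. This saturation is exactly what permits all $\asymp q^{n_0}$ contributions from nontrivial $\mu$ to line up in phase and produce the claimed growth rate $q^{n_0/2}$.
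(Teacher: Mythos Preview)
Your proof is correct and follows essentially the same approach as the paper: expand $W_\pi(g_{-n_0-n,n_0,v_0})$ via \eqref{Wpiexpand}, compute each $c_{-n_0-n,n_0}(\mu)$ from the basic identity, invoke Lemma~\ref{twistepsilonlemma} to see that the phases align for all nontrivial $\mu$, and check that the $\mu=1$ term is harmless. The only cosmetic difference is that the paper writes the coefficient directly in the form $\zeta_F(1)\chi_2(\varpi^{-n})q^{-k/2}\mu(-1)\varepsilon(\tfrac12,\mu^{-1}\chi_1^{-1})$, whereas you obtain the equivalent expression $\omega_\pi(-1)\chi_1(\varpi^{n_0-r})\zeta_F(1)q^{-n_0/2}\varepsilon(\tfrac12,\mu)/\bigl(\varepsilon(\tfrac12,\mu\chi_1)\varepsilon(\tfrac12,\mu\chi_2)\bigr)$ and then reduce it via the functional equation; the two are the same up to a unimodular $\mu$-independent factor. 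Your estimate $|c_{-n_0-n,n_0}(1)|\asymp q^{-n_0/2}$ for the trivial character is in fact sharper than the paper's stated $c_{-n-k,k}(1)\asymp 1$ (which appears to be a minor slip), but either bound suffices since the main term already has size $\asymp q^{n_0/2}$.
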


\begin{proof}
Let $0 \le k \le n/2$. If $\mu \in \tilde{X}(k)$ is a character such that $\mu \neq 1$, then the basic identity~\eqref{basicid} becomes \begin{equation}\varepsilon(\frac12, \mu\pi)\left(\sum_{t=-k-n}^\infty q^{(t+n+a(\mu))(\frac12-s)} c_{t,k}(\mu)\right)= \omega_\pi(-1)\left(\sum_{a=0}^\infty q^{-a(1-s)} G(\varpi^{a-k},\mu^{-1})\right) .\end{equation}

Using~\eqref{formulagauussum}, this tells us that \begin{equation}\label{crka2zero}c_{t,k}(\mu) = \zeta_F(1) \chi_2(\varpi^{-n})\cdot\begin{cases} q^{-\frac{k}{2}}\mu(-1)\varepsilon(\frac12, \mu^{-1}\chi_1^{-1}) & \text{ if } t= - k-n,\\ 0 & \text{ otherwise. } \end{cases}\end{equation}

On the other hand, if $\mu=1$, then the basic identity becomes \begin{equation}\begin{split}&\varepsilon(\frac12, \chi_1)\left(\sum_{t=-k-n}^\infty q^{(t+n)(\frac12-s)} c_{t,k}(1)\right)(1- \chi_2(\varpi)q^{-s})\\&= \omega_\pi(-1)\left(\sum_{a=0}^\infty q^{-a(1-s)} G(\varpi^{a-k},1)\right) (1- \chi_2(\varpi^{-1})q^{s-1}) .\end{split}\end{equation} from which it is immediate that \begin{equation}\label{crka2zeromu1} c_{-n-k,k}(1) \asymp 1 \end{equation}

We now note that \begin{align*}|W_\pi(g_{-n_0 -n,n_0,v_0}) |&= |\sum_{\mu \in \tilde{X}(n_0)}c_{-n_0 -n, n_0}(\mu) \mu(v_0) |\\&\asymp 1 + \bigg|\sum_{\substack{\mu \in \tilde{X}(n_0)\\ \mu \neq 1}}c_{-n_0 -n,n_0}(\mu) \mu(v_0)\bigg| \\&\asymp 1 + q^{-\frac{n_0}{2}}\bigg| \sum_{\substack{\mu \in \tilde{X}(n_0)\\ \mu \neq 1}}\varepsilon(\frac12, \mu^{-1}\chi_1^{-1})\mu(-v_0)\bigg| \\&= 1 + q^{-\frac{n_0}{2}}\bigg| \sum_{\substack{\mu \in \tilde{X}(n_0)\\ \mu \neq 1}}1\bigg|
\end{align*}
 where the last step uses   Lemma~\ref{twistepsilonlemma}. We immediately conclude that $|W_\pi(g_{-n_0 -n,n_0,v_0}) | \asymp q^{n_0/2}$.
\end{proof}

\subsubsection*{The case $\frac{n}{2}> \frac{a_1}{2} \ge \lfloor \frac{a_1}{2} \rfloor > a_2>0$}

In this case, we prove:
\begin{proposition}Let $v_0 \in \OF^\times$ satisfy $$\chi_1(1 + \varpi^{a_1-\lfloor \frac{a_1}{2}\rfloor}u) = \psi(v_0^{-1} \varpi^{-\lfloor \frac{a_1}{2}\rfloor}u)$$ for all $u \in \OF$, and let $w_0 \in v_0(1+\varpi^{\lfloor \frac{a_1}{2}\rfloor - a_2}\OF^\times)$. Then $W_\pi(g_{-\lfloor \frac{3a_1}2 \rfloor,\lfloor \frac{a_1}2 \rfloor,w_0}) \asymp q^{\frac12 \lfloor \frac{a_1}{2} \rfloor - \frac{a_2}2}.$

\end{proposition}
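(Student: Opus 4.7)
\medskip

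\noindent\textbf{Proof proposal.} The plan is to mirror the proof of Proposition~\ref{propa20}, with the extra complication that $\chi_2$ is now ramified. Set $k = \lfloor a_1/2\rfloor$ and $t_0 = -\lfloor 3a_1/2\rfloor$. The hypothesis $k > a_2 \geq 1$ ensures $k \geq 2$, and for any $\mu \in \tilde{X}(k)$ we have $a(\mu\chi_1) = a_1$ (as $a(\mu) \leq k < a_1$), while for $\mu$ with $a(\mu) = k$ we have $a(\mu\chi_2) = k$ (as $k > a_2$), so $a(\mu\pi) = a_1 + k = \lfloor 3a_1/2\rfloor$.

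First I would apply the basic identity~\eqref{basicid} for each $\mu \in \tilde{X}(k)$. Since both $\chi_i$ are ramified, $L(s,\mu\pi) = L(1-s,\mu^{-1}\omega_\pi^{-1}\pi) = 1$; and since $L(s,\pi) = 1$, Lemma~\ref{whitdiagonal} gives $W_\pi(a(\varpi^a)) = \delta_{a,0}$. The right-hand side of~\eqref{basicid} therefore reduces to the constant $\omega_\pi(-1)\,G(\varpi^{-k},\mu^{-1})$, so comparing $s$-exponents on the left forces the single contribution at $t=t_0$ when $a(\mu)=k$, giving
\begin{equation*}
c_{t_0, k}(\mu) \;=\; \frac{\omega_\pi(-1)\,G(\varpi^{-k},\mu^{-1})}{\varepsilon(1/2,\mu\chi_1)\,\varepsilon(1/2,\mu\chi_2)}.
\end{equation*}
For $\mu$ with $a(\mu) < k$, the Gauss sum $G(\varpi^{-k},\mu^{-1})$ vanishes by~\eqref{formulagauussum} (using $k \geq 2$), so such $\mu$ contribute nothing.

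Next, I would insert the explicit expression for $G(\varpi^{-k},\mu^{-1})$ from~\eqref{formulagauussum}, and use Lemma~\ref{twistepsilonlemma} applied to $\chi_1$ to replace $\varepsilon(1/2,\mu\chi_1)$ by $\varepsilon(1/2,\chi_1)\mu(v_0)$ throughout the range $a(\mu) \leq k$. The Fourier expansion~\eqref{Wpiexpand} then becomes
\begin{equation*}
W_\pi(g_{t_0, k, w_0}) \;=\; \frac{\omega_\pi(-1)\,\zeta_F(1)\,q^{-k/2}}{\varepsilon(1/2,\chi_1)} \sum_{\substack{\mu \in \tilde{X}\\ a(\mu) = k}} \frac{\varepsilon(1/2,\mu)\,\mu(\varpi^{-k} v_0^{-1} w_0)}{\varepsilon(1/2,\mu\chi_2)}.
\end{equation*}
Using $\varepsilon(1/2,\sigma)\varepsilon(1/2,\sigma^{-1}) = \sigma(-1)$ for $\GL(1)$ characters and substituting $\mu \mapsto \mu^{-1}$, I would rewrite this inner sum, up to a unimodular constant, as
\begin{equation*}
\sum_{\substack{\mu \in \tilde{X}\\ a(\mu) = k}} \varepsilon(1/2,\mu^{-1})\,\varepsilon(1/2,\mu\chi_2^{-1})\,\mu(-v_0/w_0),
\end{equation*}
which matches the shape of Lemma~\ref{twistepsilonlemma2} with $r=k$, $r'=a_2$, and $\chi = \chi_2^{-1}$.

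Finally, the coset condition $w_0 \in v_0(1 + \varpi^{k-a_2}\OF^\times)$ is equivalent to $-v_0/w_0 \in -1 + \p^{k-a_2}\OF^\times$, which is exactly the nonzero branch of Lemma~\ref{twistepsilonlemma2}; combining its evaluation with the prefactor $\zeta_F(1)\,q^{-k/2}$ then yields $|W_\pi(g_{t_0, k, w_0})| \asymp q^{\frac12 \lfloor a_1/2\rfloor - a_2/2}$ as desired. The genuine substance is bookkeeping: verifying that every $L$-factor trivializes so that only one $t$ survives in the basic identity, and carrying out the $\varepsilon$-factor gymnastics so as to match the precise form of Lemma~\ref{twistepsilonlemma2}. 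The main obstacle I anticipate is ensuring that the coset chosen for $w_0$ lines up exactly with the support condition of that lemma, rather than being off by a sign or a unit twist — this is where the unimodular factors coming from the central characters $\chi_i(-1)$ and $\omega_\pi(-1)$ need to be tracked carefully.
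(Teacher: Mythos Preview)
Your proposal is correct and follows essentially the same route as the paper: isolate the $a(\mu)=k$ terms via the basic identity, strip off the $\chi_1$-epsilon factor using Lemma~\ref{twistepsilonlemma}, then evaluate the remaining sum with Lemma~\ref{twistepsilonlemma2}. One small inaccuracy worth flagging: the blanket claim $L(s,\mu\pi)=1$ for all $\mu\in\tilde X(k)$ fails for the single $\mu$ with $\mu|_{\OF^\times}=\chi_2^{-1}|_{\OF^\times}$ (there $\mu\chi_2$ is unramified), and the paper handles that case separately --- but your Gauss-sum argument still disposes of it since $a(\mu)=a_2<k$ forces the right side of~\eqref{basicid} to vanish, so the nontrivial $L$-factor on the left is harmless and the conclusion is unaffected.
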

\begin{proof}
 Let $0 \le k \le n/2$, and let $\mu \in \tilde{X}(k)$ be a character such that $\mu| \OF^\times \neq \chi_2^{-1} | \OF^\times$. In this case the basic identity \eqref{basicid} reduces to \begin{equation}\varepsilon(\frac12, \mu\chi_1)\varepsilon(\frac12, \mu\chi_2)\left(\sum_{t=-k-n}^\infty q^{(t+ a_1 + a(\mu\chi_2))(\frac12-s)} c_{t,k}(\mu)\right) = \omega_\pi(-1)G(\varpi^{-k}, \mu^{-1})\end{equation}

This tells us that $$c_{t,k}(\mu) = \begin{cases} \frac{\omega_\pi^{-1}(-1) G(\varpi^{-k}, \mu^{-1})}{\varepsilon(\frac12, \mu\chi_1)\varepsilon(\frac12, \mu\chi_2)} & \text{ if } t= -a_1 - a(\mu\chi_2), \\ 0 & \text{ otherwise. } \end{cases}$$

Next, suppose $\mu \in \tilde{X}(k)$ satisfies $\mu| \OF^\times = \chi_2^{-1}| \OF^\times$; this implies $a(\mu) = a_2$. In this case,  \eqref{basicid} reduces to \begin{equation}\varepsilon(\frac12, \mu\chi_1)\left(\sum_{t=-k-n}^\infty q^{(t+ a_1 + a(\mu\chi_2))(\frac12-s)} c_{t,k}(\mu)\right) (1- \chi_2(\varpi)q^{-s}) = \omega_\pi(-1)G(\varpi^{-k}, \mu^{-1})(1- \chi_2(\varpi^{-1})q^{s-1})\end{equation}

This implies that $c_{-\lfloor \frac{3a_1}2 \rfloor,\lfloor \frac{a_1}2 \rfloor,}(\mu) = 0$ in this case.

So we see that

\begin{align*}|W_\pi(g_{-\lfloor \frac{3a_1}2 \rfloor,\lfloor \frac{a_1}2 \rfloor,w_0}) |&= \bigg|\sum_{\substack{\mu \in \tilde{X}\\ a(\mu) =\lfloor \frac{a_1}2 \rfloor}}c_{-\lfloor \frac{3a_1}2 \rfloor,\lfloor \frac{a_1}2 \rfloor,}(\mu) \mu(w_0)\bigg| \\&=  \bigg|\sum_{\substack{\mu \in \tilde{X}\\ a(\mu) =\lfloor \frac{a_1}2 \rfloor}}q^{-\frac12\lfloor \frac{a_1}{2} \rfloor}\eps(1/2, \mu^{-1} \chi_1^{-1}) \eps(1/2, \mu^{-1} \chi_2^{-1}) \eps(1/2, \mu) \mu(w_0)\bigg|\\&= \bigg|\sum_{\substack{\mu \in \tilde{X}\\ a(\mu) =\lfloor \frac{a_1}2 \rfloor}}q^{-\frac12\lfloor \frac{a_1}{2} \rfloor}\eps(1/2, \mu^{-1} \chi_2^{-1}) \eps(1/2, \mu) \mu(-v_0^{-1}w_0)\bigg| \\ &\asymp q^{\frac12 \lfloor \frac{a_1}{2} \rfloor - \frac{a_2}2}.
\end{align*}
 where we have used Lemma~\ref{twistepsilonlemma} and   Lemma~\ref{twistepsilonlemma2}.
\end{proof}

\section{The global application}
\subsection{Some notations}\label{s:notfin}Henceforth, we let $F$ denote a \emph{number field}. Let $\mathfrak
{d}$ be the different of $F$ and $\Delta = N_{F/\Q}(\mathfrak{d})$ be the discriminant. For any place $v$ of $F$, we will use the notation $X_v$ for each \emph{local object} $X$ introduced in the previous section. The corresponding global objects will be denoted without the subscript $v$. Thus, we will talk about objects like $F_v, \OF_v, \p_v, \varpi_v, q_v, G_v$ etc., which (at least when $v$ is non-archimedean) were considered in the previous section.  We let $\A = \A_F$ denote the ring of adeles of $F$. We  let $\a $ denote the set of archimedean places and $\f$ denote the set of non-archimedean places. For each $v \in \f$, let $d_v$ be the unique non-negative integer such that $\mathfrak
{d}_v = \p_v^{d_v}$. We let $\psi$ denote the standard non-trivial additive character of $F \bs \A$ obtained by composing the map $\mathrm{Tr}_{\A_F/\A_\Q}$ with the unique additive character on $\A_{\Q}$ that is unramified at all finite places and equals  $e^{2 \pi i x}$ at $\R$. Note that for a place $v \in \f$, $\psi_v$ has conductor equal to $\p_v^{-d_v}$.

For each place $v$ of $F$, we define a maximal compact subgroup $K_v$ of $G_v$ as follows: $$K_v = \begin{cases}G(\OF_v) &\text{ if } v \text{ is non-archimedean,} \\
O(2) &\text{ if } v \text{ is archimedean and real,} \\U(2) &\text{ if } v \text{ is archimedean and complex.}
\end{cases}$$
We let $K = \prod_vK_v$ be the corresponding subgroup of $G(\A)$.

We normalize measures at the archimedean places $v \in \a$ as follows. We take the measure $dx$ on $F_v$ to be the usual Lebesgue measure. We choose the measure on $F_v^\times$ to equal $\zeta_{v}(1)\frac{dx}{|x|}.$ These measures transport to measures on $N_v$, $A_v$ and $Z_v$ for each $v \in \a$. This gives us a left Haar measure on $B_v$. We normalize the measure on $K_v$ to be the probability measure. The Iwasawa decomposition now gives us a left Haar measure on $G_v$.

 We adopt measures on each of our adelic groups by taking the product measure over all places. We give all discrete groups (such as $G(F)$, $N(F)$ etc.) the counting measure and thus obtain a measure on the appropriate quotient groups. In particular, our normalization of measures satisfies the conditions of~\cite[Section 2.1.6]{michel-2009}.

Let $\pi = \otimes_v \pi_v$ be an irreducible, unitary, cuspidal automorphic representation of $G(\A)$ with central character $\omega_\pi = \prod_v \omega_{\pi_v}$. For each $v \in \f$, let $n_v = a(\pi_v)$, $m_v =a(\omega_{\pi_v})$ and put $N = \prod_{v \in \f} q_v^{n_v}$, $M= \prod_{v\in \f} q_v^{m_v}$. We have $N = N_{F/\Q}(\mathfrak{N})$, $M = N_{F/\Q}(\mathfrak{M})$ where $\mathfrak{N}$ is the conductor of $\pi$ and  $\mathfrak{M}$ is the conductor of $\omega_\pi$. We let $\mathfrak{N'}$ denote the ideal of $\OF$ such that  $\mathfrak{N'}_v = \p_v^{\min(n_v, 1)}$ for all $v \in \f$; thus $\mathfrak{N'}$ is the ``squarefree" part of $\mathfrak{N}$.

\subsection{Archimedean lowest-weight vectors}\label{s:notarch}

We recall the various possibilities for $\pi_v$ for each $v \in \a$.

\textbf{Case 1 (Principal series representations of $\GL_2(\R)$):} $F_v = \R$, $\pi_v = \chi_1 \boxplus \chi_2$, where for $i=1, 2$, we have $\chi_i = |t|^{s_i} \sgn(t)^{m_i}$, with $m_i \in \{0,1\}$, $m_1 \ge m_2$, $s_i \in \C$, $t_v=s_1 + s_2 \in i\R$ and $s_v = s_1 - s_2 \in i\R \cup (-1,1)$.
\emph{In this case, we define $k_v = m_1-m_2$}.

We remark that in the special case $s_v =0$, $k_v=1$, the representation $\pi_v$ is also known as the limit of discrete series.

\textbf{Case 2 (Discrete series representations of $\GL_2(\R)$):} $F_v = \R$, $\pi_v$ the unique irreducible subrepresentation of $\chi_1 \boxplus \chi_2$, where for $i=1, 2$, we have $\chi_i = |t|^{s_i} \sgn(t)^{m_i}$, with $m_i \in \{0,1\}$, $m_1 \ge m_2$, $s_i \in \C$, $t_v=s_1 + s_2 \in i\R$, $s_v = s_1 - s_2 \in \Z_{>0}$, $s_v \equiv m_1-m_2+1 \pmod{2}$.
\emph{In this case, we define $k_v = s_v+1$}.

Thus, in the above two cases, $k_v$ is the smallest non-negative integer such that $\pi_v$ contains a vector $\phi_v$ satisfying (for all $\theta \in \R$), \begin{equation}\label{weighteq}\pi_v\left( \mat{\cos(\theta)}{\sin(\theta)}{-\sin(\theta)}{\cos(\theta)} \phi_v \right) = e^{ik_v \theta}\phi_v.\end{equation}
\textbf{Case 3 (Principal series representations of $\GL_2(\C)$):} $F_v = \C$, $\pi_v = \chi_1 \boxplus \chi_2$, where for $i=1, 2$, we have $\chi_i = (z \bar{z})^{s_i} (z (z \bar{z})^{-1/2})^{m_i}$, with $m_i \in \Z$, $m_1 \ge m_2$, $s_i \in \C$, $t_v=s_1 + s_2 \in i\R$ and $s_v = s_1 - s_2 \in i\R \cup (-1,1)$.
\emph{In this case, we define $k_v = m_1-m_2$}.

We note that in this case, $k_v$ is the smallest integer such that the restriction of $\pi_v$ to $K_v$ contains a representation of dimension $k_v +1$.

\medskip

\begin{definition}Let $v \in \a$. We say that a vector $\phi_v \in \pi_v$ is a \emph{lowest weight vector} if

\begin{itemize}

\item When $v$ is real (i.e., we are in Case 1 or Case 2), then the relation~\eqref{weighteq} holds for the particular integer $k_v$ defined above.

\item When  $v$ is complex  (i.e., we are in Case 3), the vector $\phi_v$ is contained in a $k_v +1$ dimensional representation of $K_v$ for the particular integer $k_v$ defined above (this is the lowest-dimensional representation of $K_v$ occurring in $\pi_v$), and furthermore satisfies, for all $\theta \in \R$, the relation \begin{equation}\label{weighteqcmplx}\pi_v\left( \mat{e^{i\theta}}{0}{0}{e^{-i\theta}} \phi_v \right) = e^{ik_v \theta}\phi_v.\end{equation}

\end{itemize}
\end{definition}

It is well-known that the lowest-weight vector is unique up to multiples.

\subsubsection*{The lowest weight vector in the Whittaker model}
It is possible to write down the lowest weight vector explicitly in the Whittaker model. As before, let $v \in \a$. We define a function $W_v$ on $N_vA_v$ as follows.
\begin{itemize}
 \item If    $v \in \a$ is real, then for $x \in \R$, $y \in \R^\times$, $W_v(n(x)a(y)) = e^{2\pi i x} \kappa_v(y)$ where $$\kappa_v(y) = \begin{cases}\sgn(y)^{m_1}|y|^{t_v/2} |y|^{1/2} K_{s_v/2}(2 \pi |y|) &\text{ in Case 1, if } k_v=0,\\|y|^{t_v/2} |y| \left(K_{(s_v-1)/2}(2 \pi |y|)  +\sgn(y)K_{(s_v+1)/2}(2 \pi |y|) \right)&\text{ in Case 1, if } k_v=1, \\
    |y|^{t_v/2} y^{k_v/2}e^{-2 \pi y} (1 + \sgn(y)) &\text{ in Case 2.}\end{cases}$$

 \item If    $v \in \a$ is complex, then for $x \in \C$, $y \in \C^\times$, $W_v(n(x)a(y)) = e^{2\pi i (x+\bar{x})} \kappa_v(y)$ where \begin{equation}\label{formulagl2c}\kappa_v(y) =(y/\bar{y})^{m_1}|y|^{t_v}|y|^{(1+k_v)/2}K_{s_v-k_v/2}(4 \pi |y|).\end{equation}
\end{itemize}

Using the Iwasawa decomposition, we extend $W_v$ to a function on all of $G_v$.\footnote{More precisely, we use~\eqref{weighteq} if $v$ is real, and a combination of ~\eqref{weighteqcmplx} and the degree $k_v$ representation of $SU(2)$ if $v$ is complex.} Then, it can be shown that $W_v$ is (up to multiples) the unique lowest weight vector in $\mathcal{W}(\pi_v, \psi_v)$. A proof of this fact can be found in the paper~\cite{booker-krishna}.

\medskip

We define $$\langle W_v, W_v \rangle = \int_{y \in F_v^\times} |W_v(a(y))|^2 d^\times y.$$ For each place $v \in \a$, define
$$h(\pi_v) = 1 + \frac{\sup_{g \in G_v}|W_v(g)|}{\langle W_v, W_v \rangle^{\frac12}}$$ where $W_v$ is the function defined above. The number $1$ is unimportant, and only exists to ensure that $h(\pi_v)$ is never too small.

It can be checked  using the asymptotics of the Bessel function in the transition range that \begin{equation}\label{hpiformula}h(\pi_v) \asymp \begin{cases} 1 + |s_v|^{1/6} &\text{ in Case 1,} \\ |k_v|^{1/4} = |1 + s_v|^{1/4} &\text{ in Case 2.} \end{cases}\end{equation}

A computation of $h(\pi_v)$ in Case 3, i.e., when $v$ is complex, can probably be done by taking a closer look at the asymptotics of the function $|y|^{(1+k_v)/2}K_{s_v-k_v/2}(4 \pi |y|)$. We do not perform that analysis here.

\begin{remark}\label{remlocanal}Let $C(\pi_v)$  be the local analytic conductor of $\pi_v$. Then there exists an absolute constant $C$ such that $h(\pi_v) \gg C(\pi_v)^C$. This can be seen as follows. By looking at~\eqref{hpiformula}, one can check that $C = 1/12$ works in Cases 1 and 2. For case 3, we can substitute into~\eqref{formulagl2c} $y \asymp |s|$ if $s$ is large relative to $k$ and $|y| \asymp k$ otherwise, to deduce the same result.
\end{remark}
Finally, we define $$h(\pi_\infty) = \prod_{v \in \a}h(\pi_v).$$
\subsection{The main result}We say that an automorphic form $\phi \in \pi$ is a \emph{newform} if $\phi = \otimes_v \phi_v $ is a factorizable vector and $\phi_v \in \pi_v$ satisfy the following conditions:

\begin{enumerate}
\item For each $v \in \f$, we have $\pi(k)\phi_v =\phi_v$ for all $k \in K_1(\p^{n_v})$.

\item For $v \in \a$, $\phi_v$ is a lowest weight vector in $\pi_v$.

\end{enumerate}

It follows that a newform $\phi$ is unique up to multiples. We define $$\|\phi\|_2 = \int_{Z(\A)G(F)\bs G(\A)} |\phi(g)|^2 dg.$$

We now state our main result.

\begin{theorem}\label{t:globalmain}Let the notations be as in Sections~\ref{s:notfin} and \ref{s:notarch}. Let $\phi \in \pi$ be a newform such that $\| \phi \|_2 =1$.  Then $$\sup_{g \in G(\A)}|\phi(g)| \gg_{F,\eps}  h(\pi_\infty)^{1-\eps}N^{-\eps}\prod_{v \in \f}\max(q_v^{\frac12 \lfloor \frac{3m_v}{2} \rfloor - \frac{n_v}2}, 1).$$ In particular, if $\mathfrak{M}$ is the square of an integral ideal and $\mathfrak{N}^2$ divides $\mathfrak{M}^3$,  then $$\sup_{g \in G(\A)}|\phi(g)| \gg_{\pi_\infty,F, \eps } N^{- \eps} \frac{M^{3/4}}{N^{1/2}}.$$

\end{theorem}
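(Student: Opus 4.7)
The plan is to reduce the global estimate to the local lower bound of Theorem~\ref{t:main} via the Whittaker-Fourier expansion, and then to control the global $L^2$-norm through Rankin-Selberg. Let $\phi^\circ \in \pi$ be the newform (unique up to scalars) whose global Whittaker function factorizes as $W_{\phi^\circ}(g) = \prod_v W_v(g_v)$, with $W_v$ the local newform of Definition~\ref{defn:normalized-W-newform} (in particular $W_v(1)=1$) at each $v \in \f$, and $W_v$ the explicit lowest-weight Whittaker function of Section~\ref{s:notarch} at each $v \in \a$. It suffices to prove the stated lower bound for $\sup_g|\phi^\circ(g)|/\|\phi^\circ\|_2$.

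For the numerator, Fourier inversion on the compact quotient $N(F)\backslash N(\A)$ gives
\[
W_{\phi^\circ}(g) = \int_{N(F)\backslash N(\A)} \phi^\circ(ng)\,\overline{\psi(n)}\,dn,
\]
so $\sup_g|\phi^\circ(g)| \gg_F \sup_g|W_{\phi^\circ}(g)|$, and the factorization together with independent choice of $g_v$ at each place gives $\sup_g|W_{\phi^\circ}(g)| = \prod_v \sup_{g_v}|W_v(g_v)|$. Applying Theorem~\ref{t:main} at each $v \in \f$ and using the definition of $h(\pi_v)$ at each $v \in \a$, we deduce
\[
\sup_g|\phi^\circ(g)| \gg_F h(\pi_\infty)^{1-\eps}\Bigl(\prod_{v\in\a}\langle W_v,W_v\rangle\Bigr)^{1/2}\prod_{v\in\f}\max\bigl(q_v^{\frac12\lfloor 3m_v/2\rfloor - n_v/2},\,1\bigr),
\]
where the slightly weakened exponent $1-\eps$ absorbs the additive ``$+1$'' in the definition of $h(\pi_v)$ along with polynomial losses in the archimedean spectral parameters.

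For the denominator, the standard Rankin-Selberg unfolding (cf.~\cite{michel-2009}) yields a factorization
\[
\|\phi^\circ\|_2^2 = C_F\cdot\operatorname{Res}_{s=1}\Lambda(s,\pi\times\tilde\pi)\cdot\prod_v\mathcal{I}_v,
\]
where $\mathcal{I}_v$ is a local zeta integral proportional to $\langle W_v,W_v\rangle$ once one divides out by the local $L$-factor. At each $v \in \f$, Lemma~\ref{normwhittaker} gives $\langle W_v,W_v\rangle\le 2$, and a case-by-case check in the classification of Section~\ref{sec:local-calculations} shows that the normalized contribution is $O(1)$ uniformly in $q_v$ and conductor. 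The residue equals $c_F\,L(1,\operatorname{Ad}\pi)$, which is $\ll_\eps N^\eps$ by the convexity bound. Hence $\|\phi^\circ\|_2^2 \ll_{F,\pi_\infty,\eps} N^\eps \prod_{v\in\a}\langle W_v,W_v\rangle$, and dividing gives the main estimate. The ``In particular'' assertion is then purely arithmetic: $\mathfrak{M}=\mathfrak{L}^2$ forces $m_v = 2\ell_v$ and $\lfloor 3m_v/2\rfloor = 3m_v/2$, while $\mathfrak{N}^2\mid\mathfrak{M}^3$ forces $n_v \le 3m_v/2$, so each local factor equals $q_v^{3m_v/4 - n_v/2} \ge 1$ and the product over $v\in\f$ telescopes to $M^{3/4}N^{-1/2}$.

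The principal obstacle is the uniform-in-$N$ bookkeeping of the local Rankin-Selberg factors $\mathcal{I}_v$ at ramified non-archimedean places: one must verify, case by case along the classification of $\pi_v$ from Section~\ref{sec:local-calculations}, that each ratio $\mathcal{I}_v / L_v(1,\pi_v\times\tilde\pi_v)$ is bounded independently of $q_v$. This amounts to a careful matching of measure normalizations with the explicit diagonal values in Lemma~\ref{whitdiagonal}. The remaining inputs---the sub-$N^\eps$ estimate on $L(1,\operatorname{Ad}\pi)$, the archimedean Rankin-Selberg identities, and the local lower bound of Theorem~\ref{t:main}---are either classical or already established in the paper.
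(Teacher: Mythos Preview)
Your proposal is correct and follows essentially the same route as the paper: bound $\sup|\phi|$ from below by $\sup|W_\phi|$ via the Fourier--Whittaker integral, factor the global Whittaker function, invoke Theorem~\ref{t:main} at each ramified finite place, and control $\|\phi\|_2$ through the Rankin--Selberg identity (the paper's equation~\eqref{rankinsel}) together with the convexity bound for $L(1,\pi,\mathrm{Ad})$ and Lemma~\ref{normwhittaker}.

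One small bookkeeping point: you write $L(1,\mathrm{Ad}\,\pi)\ll_\eps N^\eps$ and then allow an implied dependence on $\pi_\infty$ in the bound for $\|\phi^\circ\|_2^2$, but the stated theorem has $\gg_{F,\eps}$ with the archimedean dependence carried \emph{explicitly} by $h(\pi_\infty)^{1-\eps}$. The paper handles this by writing the convexity bound as $L(1,\pi,\mathrm{Ad})\ll_{F,\eps}(N\,h(\pi_\infty))^\eps$, using Remark~\ref{remlocanal} to dominate the archimedean analytic conductor by a fixed power of $h(\pi_\infty)$; this is what lets the $h(\pi_\infty)^\eps$ loss be absorbed into the exponent $1-\eps$ rather than into the implied constant. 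Your phrase ``polynomial losses in the archimedean spectral parameters'' gestures at this, but to match the uniformity in the theorem you should route that loss through $h(\pi_\infty)^\eps$ explicitly.
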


\begin{remark}The above Theorem was proved by Templier~\cite{templier-large} in the special case $\mathfrak{M} = \mathfrak{N}$.\footnote{Templier also assumed that $F$ is totally real; however his proof works equally well for general number fields.}
\end{remark}

In view of the above Theorem, it is interesting to speculate on the true size of $\sup_{g \in G(\A)}|\phi(g)|$ relative to the conductor. We propose the following (optimistic) conjecture, which combines Conjecture~\ref{newconj} of the introduction and our expectation that the only obstructions to the sup-norm being as small as possible are the local ones (i.e., the sizes of $h(\pi_v)$).

\begin{conjecture}Let the notations be as in Sections~\ref{s:notfin} and \ref{s:notarch}. Let $\phi \in \pi$ be a newform such that $\| \phi \|_2 =1$. Suppose that $m_v \le \lceil \frac{n_v}{2} \rceil$ for all places $v$. Then $$N^{-\eps} \ll_{\pi_\infty,F, \eps } \sup_{g \in G(\A)}|\phi(g)| \ll_{\pi_\infty,F, \eps }  N^{\eps}.$$
\end{conjecture}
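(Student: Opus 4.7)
The plan is to reduce the two-sided estimate to a pointwise control of $\prod_{v \in \f} h(\pi_v)$ by combining the local-global machinery behind Theorem~\ref{t:globalmain} with the still-open Conjecture~\ref{newconj}. The guiding principle, morally valid up to a multiplicative loss of $N^{\eps}$, is
\[
\frac{\sup_{g \in G(\A)}|\phi(g)|}{\|\phi\|_2} \ \asymp_{F,\pi_\infty,\eps}\ N^{\pm\eps} \prod_{v \in \f} h(\pi_v),
\]
so that, under $\|\phi\|_2=1$, the conjecture becomes equivalent (up to $N^{\eps}$) to the assertion $\prod_{v \in \f} h(\pi_v) \ll_\eps N^{\eps}$ in the regime $m_v \le \lceil n_v/2 \rceil$.

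For the lower bound $\sup_g|\phi(g)| \gg N^{-\eps}$, nothing new is required: Theorem~\ref{t:globalmain} already gives $\sup_g|\phi(g)| \gg_{F,\eps} h(\pi_\infty)^{1-\eps} N^{-\eps} \prod_{v\in\f}\max(q_v^{\frac12\lfloor 3m_v/2\rfloor - n_v/2},1)$, and each factor in the product is trivially $\ge 1$. Absorbing the archimedean dependence into the implicit constant yields $\sup_g|\phi(g)| \gg_{\pi_\infty,F,\eps} N^{-\eps}$ immediately, with no use of the hypothesis $m_v \le \lceil n_v/2 \rceil$.

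For the upper bound I would proceed in two independent steps. First, I would establish a local-to-global upper inequality of the form $\sup_g|\phi(g)| \ll_{\pi_\infty,F,\eps} N^{\eps} \prod_{v\in\f} h(\pi_v)$; this should follow from expanding $\phi$ via its Whittaker--Fourier series, factoring the global Whittaker function as $\prod_v W_{\pi_v}$, and estimating term-by-term via $|W_{\pi_v}(g_v)| \le h(\pi_v)$ at finite places, while truncating the archimedean sum using the rapid decay of $W_v$ past the transition range (at the cost of a polylogarithmic factor absorbed into $N^{\eps}$). Techniques in the style of Harcos--Templier~\cite{harcos-templier-2} and \cite{sahasuplevel}, adapted to general central character using the local formulas of Section~\ref{sec:local-calculations}, should suffice. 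Second, I would invoke Conjecture~\ref{newconj} place-by-place: under the hypothesis $m_v \le \lceil n_v/2 \rceil$, the conjecture yields $h(\pi_v) \ll_\eps q_v^{n_v\eps}$ for each $v \in \f$ with $n_v \ge 1$, while $h(\pi_v)=1$ whenever $n_v=0$ by Lemma~\ref{trivboundn0}. Multiplying gives $\prod_{v\in\f} h(\pi_v) \ll_\eps N^{\eps}$, which combined with the first step completes the upper bound.

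The main obstacle is, without doubt, Conjecture~\ref{newconj} itself. For principal series $\pi = \chi_1 \boxplus \chi_2$ in the range $m \le \lceil n/2 \rceil$, the basic identity of Proposition~\ref{basicformula} expresses $c_{t,k}(\mu)$ as explicit products of $\GL(1)$ $\eps$-factors, and the required bound $h(\pi) \ll_\eps q^{n\eps}$ reduces to square-root cancellation in short exponential sums of mixed multiplicative-additive type, e.g.
\[
\sum_{\substack{\mu \in \tilde{X} \\ a(\mu)=k}} \eps(1/2,\mu\chi_1)\,\eps(1/2,\mu\chi_2)\,\eps(1/2,\mu)\,\mu(v),
\]
a genuinely deep arithmetic assertion in the spirit of short character-sum bounds. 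For $\pi$ supercuspidal, the analogous cancellation is precisely the one flagged in Remark~\ref{rem:localcon} for the sum~\eqref{supercuspsum}. A secondary but much more tractable obstacle is establishing the local-to-global reverse inequality uniformly in $N$; this should follow from a careful implementation of the Whittaker-expansion method together with archimedean Sobolev estimates, and does not appear to require any new analytic input beyond what is already in the literature.
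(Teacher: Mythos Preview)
The statement you are attempting to prove is a \emph{conjecture}; the paper offers no proof, only a heuristic rationale. That rationale is exactly the one you articulate: the conjecture is said to follow from (i) the local Lindel\"of-type Conjecture~\ref{newconj} and (ii) the expectation that the only obstructions to the sup-norm being small are local, i.e., are captured by $\prod_v h(\pi_v)$. So at the level of overall strategy your proposal is aligned with the paper, and your derivation of the lower bound from Theorem~\ref{t:globalmain} is correct and unconditional.

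Where your proposal contains a genuine gap is in the upper bound, and specifically in your assessment of the two obstacles. You describe the local-to-global inequality
\[
\sup_{g}|\phi(g)| \ \ll_{\pi_\infty,F,\eps}\ N^{\eps}\prod_{v\in\f} h(\pi_v)
\]
as ``much more tractable'' and claim it should follow from a Whittaker--Fourier expansion with termwise bounds $|W_{\pi_v}(g_v)|\le h(\pi_v)$, citing the techniques of \cite{harcos-templier-2} and \cite{sahasuplevel}. This is not correct. The Fourier expansion of $\phi$ is an infinite sum over $\xi\in F^\times$, and bounding each term by $\prod_v h(\pi_v)$ still leaves the length of the sum, which is governed by the local support conditions and is of size roughly $N$ (or at best $N^{1/2}$) near a cusp. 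The amplification methods of \cite{harcos-templier-2} and \cite{sahasuplevel} are designed precisely to beat this trivial count, and they succeed only in saving a small power of $N$: the current records are $N^{1/3+\eps}$ and $N^{5/12+\eps}$, not $N^{\eps}$. An inequality of the form you state is, in the regime where $h(\pi_v)\ll q_v^{n_v\eps}$, tantamount to the full sup-norm conjecture $\|\phi\|_\infty \ll N^{\eps}$ itself. The paper accordingly treats this step not as something provable by existing techniques but as an \emph{assumption} (``the only obstructions \ldots\ are local''). In short, you have inverted the relative difficulties: Conjecture~\ref{newconj} is a hard but purely local exponential-sum problem, whereas the local-to-global upper bound is the global sup-norm problem, which is at least as deep and for which no method currently gives anything close to $N^{\eps}$.
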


\subsection{The proof of Theorem~\ref{t:globalmain}}Henceforth, we fix a newform $\phi \in \pi$. We define the global Whittaker newform $W_\phi$ on $G(\A)$ in the usual way \begin{equation}\label{globalwhittaker}W_\phi(g) = \int_{F \bs \A} \phi(n(x)g)\psi(-x)dx.\end{equation}

The global Whittaker newform factors as  $W_\phi(g) =c_\phi \prod_v W_v(g)$ where $c_\phi$  is a non-zero complex number independent of $g$, and for each place $v$, the local function $W_v$  is as follows:

\begin{itemize}
\item If $v \in \f$, then $W_v(g) = W_{\pi_v}(a(\varpi_v^{d_v})g)$ where $W_{\pi_v}$ is the local Whittaker newform considered in Section~\ref{sec:local-calculations}. The term $a(\varpi_v^{d_v})$ appears because the conductor of $\psi_v$ is $\p_v^{-d_v}$ (see Remark~\ref{remchoice}).

 \item If    $v \in \a$, then $W_v$ is as in Section~\ref{s:notarch}.
\end{itemize}

Using~\eqref{globalwhittaker}, we get immediately that \begin{align*}\frac{\sup_{g\in G(\A)}|\phi(g)|}{\| \phi \|_2^{1/2}} & \ge \vol(F \bs \A)\frac{\sup_{g\in G(\A)}|W_{\phi}(g)|}{\| \phi \|_2^{1/2}} \\&=  \vol(F \bs \A) \frac{|c_{\phi}|}{\| \phi \|_2^{1/2}} \prod_{v} \sup_{g \in G_v} |W_v(g)| \\&=  \vol(F \bs \A)\frac{|c_{\phi}|}{\| \phi \|_2^{1/2}} h(\pi_\infty) \prod_{v \in \a} \langle W_v, W_v \rangle^{\frac12} \prod_{v \in \f}\max(q_v^{\frac12 \lfloor \frac{3m_v}{2} \rfloor - \frac{n_v}2}, 1), \end{align*}
where in the last step we have used Theorem~\ref{t:main}.

So the theorem will follow once we show that \begin{equation}\label{reqeq}|c_\phi|^2 \gg_{F, \eps} \| \phi \|_2 \left(\prod_{v \in \a} \langle W_v, W_v \rangle \right)^{-1}(N h(\pi_\infty))^{-\eps}.\end{equation}

The theory of the Rankin-Selberg integral representation (see Lemma 2.2.3 of~\cite{michel-2009}) gives us the formula \begin{equation}\label{rankinsel}\frac{\| \phi \|_2 }{L(1, \pi, Ad)} = c_F c_{\phi}^2 \prod_v \frac{\langle W_v, W_v \rangle \ \zeta_v(2) }{L(1, \pi_v, Ad) \ \zeta_v(1) },\end{equation} where $c_F$ is a constant depending only on $F$.

Next, we have the following facts:

\begin{equation}\label{bound1}\frac{\langle W_v, W_v \rangle \ \zeta_v(2) }{L(1, \pi_v, Ad) \ \zeta_v(1) }=1 \text{ for all }v \in \f, \ \pi_v \text{ unramified. \quad (See \cite{michel-2009}.)}\end{equation}  \begin{equation}\label{bound2}L(1, \pi_v, Ad) \asymp 1 \text{ for all } v. \quad\text{ (This is immediate.)}\end{equation} \begin{equation}\label{bound3}\langle W_v, W_v \rangle \asymp 1  \text{ for all }v \in \f, \ \pi_v    \text{ ramified. \quad (See Lemma~\ref{normwhittaker}.)}\end{equation} \begin{equation}\label{bound4}L(1, \pi, Ad) \ll_{F,\eps} (N h(\pi_\infty))^{\eps}\end{equation} The last bound is the usual convexity bound coupled with the observation in Remark~\ref{remlocanal}.

Combining~\eqref{bound1}, ~\eqref{bound2},~\eqref{bound3}, and~\eqref{bound4} with~\eqref{rankinsel}, we immediately derive~\eqref{reqeq}. This completes the proof of Theorem~\ref{t:globalmain}.

\bibliography{refs-que}

\def\cprime{$'$} \def\cprime{$'$} \def\cprime{$'$}
\begin{thebibliography}{10}

\bibitem{blomer-holowinsky}
Valentin Blomer and Roman Holowinsky.
\newblock Bounding sup-norms of cusp forms of large level.
\newblock {\em Invent. Math.}, 179(3):645--681, 2010.

\bibitem{booker-krishna}
Andrew~R. Booker and M.~Krishnamurthy.
\newblock A strengthening of the {${\rm GL}(2)$} converse theorem.
\newblock {\em Compos. Math.}, 147(3):669--715, 2011.

\bibitem{brumley-templier}
Farrell Brumley and Nicolas Templier.
\newblock Large values of cusp forms on ${GL}(n)$.
\newblock {\em Preprint}, 2014.

\bibitem{harcos-templier-2}
Gergely Harcos and Nicolas Templier.
\newblock On the sup-norm of {M}aass cusp forms of large level. {III}.
\newblock {\em Math. Ann.}, 356(1):209--216, 2013.

\bibitem{MR0401654}
Herv{\'e} Jacquet and R.~P. Langlands.
\newblock {\em Automorphic forms on {${\rm GL}(2)$}}.
\newblock Lecture Notes in Mathematics, Vol. 114. Springer-Verlag, Berlin,
  1970.

\bibitem{marsh15}
Simon Marshall.
\newblock Local bounds for ${L}^p$ norms of {M}aass forms in the level aspect.
\newblock {\em Preprint}, 2015.

\bibitem{michel-2009}
Philippe Michel and Akshay Venkatesh.
\newblock The subconvexity problem for {${\rm GL}_2$}.
\newblock {\em Publ. Math. Inst. Hautes \'Etudes Sci.}, (111):171--271, 2010.

\bibitem{saha-sup-level-hybrid}
Abhishek Saha.
\newblock Hybrid sup-norm bounds for {M}aass newforms of powerful level.
\newblock {\em In preparation}.

\bibitem{sahasuplevel}
Abhishek Saha.
\newblock {O}n sup-norms of cusp forms of powerful level.
\newblock {\em Preprint}, 2014.

\bibitem{Sch02}
Ralf Schmidt.
\newblock Some remarks on local newforms for {$\rm GL(2)$}.
\newblock {\em J. Ramanujan Math. Soc.}, 17(2):115--147, 2002.

\bibitem{temp:notes}
Nicolas Templier.
\newblock Non-archimedean {W}hittaker functions, hypergeometric sums and
  applications.
\newblock {\em In preparation}.

\bibitem{templier-large}
Nicolas Templier.
\newblock Large values of modular forms.
\newblock {\em Camb. J. Math.}, 2(1):91--116, 2014.

\bibitem{tunnell78}
Jerrold~B. Tunnell.
\newblock On the local {L}anglands conjecture for {$GL(2)$}.
\newblock {\em Invent. Math.}, 46(2):179--200, 1978.

\end{thebibliography}

\end{document}